\numberwithin{equation}{section}
\newtheorem{theorem}{Theorem}[section]
\newtheorem{lemma}[theorem]{Lemma}
\newtheorem{proposition}[theorem]{Proposition}
\newtheorem{corollary}[theorem]{Corollary}
\newtheorem{algorithm}[theorem]{Algorithm}
\theoremstyle{definition}
\newtheorem{definition}[theorem]{Definition}
\newtheorem{remark}[theorem]{Remark}
\newtheorem{example}[theorem]{Example}
\newtheorem{construction}[theorem]{Construction}
\newtheorem{notation}[theorem]{Notation}
\newtheorem{question}[theorem]{Question}
\DeclareMathOperator{\Ann}{Ann}
\DeclareMathOperator{\Ass}{Ass}
\DeclareMathOperator{\borel}{Borel}
\DeclareMathOperator{\Borel}{Borel}
\DeclareMathOperator{\sfBorel}{sfBorel}
\DeclareMathOperator{\sfborel}{sfBorel}
\DeclareMathOperator{\gens}{gens}
\DeclareMathOperator{\Bgens}{Bgens}
\DeclareMathOperator{\trunc}{trunc}
\DeclareMathOperator{\supp}{supp}
\DeclareMathOperator{\codim}{codim}
\DeclareMathOperator{\pd}{pd}
\DeclareMathOperator{\HS}{HS}
\newcommand{\bgeq}{\succ}
\begin{document}


\title{Borel generators}

\author{Christopher A. Francisco}
\address{Department of Mathematics, Oklahoma State University,
401 Mathematical Sciences, Stillwater, OK 74078}
\email{chris@math.okstate.edu}
\urladdr{http://www.math.okstate.edu/$\sim$chris}

\author{Jeffrey Mermin}
\address{Department of Mathematics, Oklahoma State University,
401 Mathematical Sciences, Stillwater, OK 74078}
\email{mermin@math.okstate.edu}
\urladdr{http://www.math.okstate.edu/$\sim$mermin}

\author{Jay Schweig}
\address{Department of Mathematics, University of Kansas, 405 Snow Hall, Lawrence, KS 66045}
\email{jschweig@math.ku.edu}
\urladdr{http://www.math.ku.edu/$\sim$jschweig}

\keywords{}
\subjclass[2000]{13D02; 13D40; 13F20; 05E40}
\thanks{Version: \today}

\begin{abstract}
We use the notion of Borel generators to give alternative methods for computing standard invariants, such as associated primes, Hilbert series, and Betti numbers, of Borel ideals. Because there are generally few Borel generators relative to ordinary generators, this enables one to do manual computations much more easily. Moreover, this perspective allows us to find new connections to combinatorics involving Catalan numbers and their generalizations. We conclude with a surprising result relating the Betti numbers of certain principal Borel ideals to the number of pointed pseudo-triangulations of particular planar point sets.
\end{abstract}

\maketitle


\section{Introduction} \label{s:intro}

Borel-fixed ideals are arguably the most important ideals in computational commutative algebra. Their combinatorial properties make them easier to investigate than arbitrary monomial ideals, and thanks to work of Galligo \cite{Ga} and Bayer and Stillman \cite{BS}, we know that generic initial ideals are always Borel-fixed. Moreover, Eliahou and Kervaire show that the minimal graded free resolution of a Borel-fixed ideal in characteristic zero has a particularly nice form \cite{EK}. Thus, as Bayer and Stillman prove in \cite{BS-invent}, one can determine the regularity of any homogeneous ideal simply by computing the reverse-lex generic initial ideal and determining the highest degree of a minimal generator. Furthermore, lexicographic ideals are Borel-fixed, and thus, by understanding Borel-fixed ideals, we gain valuable insight into the Hilbert functions and graded Betti numbers of arbitrary homogeneous ideals. In addition, papers such as \cite{CE,GHP,Si-pborel} exploit the resolutions of Borel-fixed ideals to obtain minimal free resolutions of closely related monomial ideals as well. Borel-fixed ideals are also of special importance in geometric combinatorics, where they arise in connection with shifted simplicial complexes.

Let $S=k[x_1,\dots,x_n]$, where $k$ is a field. We use the term \emph{Borel ideal} to mean a strongly stable ideal, also known as a 0-Borel-fixed ideal, which is an ideal that is fixed by the Borel group in characteristic zero. Thus the collection of Borel ideals is precisely the collection of generic initial ideals over a field of characteristic zero.

The usual method for studying a Borel ideal is to compute invariants of the ideal in terms of its minimal monomial generating set. In contrast, a few researchers have instead concentrated on the \emph{Borel generators} of a Borel ideal $B$, a subset $T$ of monomials in $B$ such that every minimal monomial generator of $B$ can be obtained from Borel moves on elements of $T$. For example, Herzog and Srinivasan \cite{HS} prove the Borel case of the Multiplicity Conjectures using Borel generators.  Many of the applications of this idea involve principal Borel ideals (ideals with a single Borel generator) in special situations. For example, in \cite{Si}, Sinefakopoulos constructs a shellable polytopal cell complex supporting a minimal free resolution of a principal Borel ideal. J\"ollenbeck and Welker also construct a minimal cellular resolution of a principal Borel ideal in \cite{JW}. In addition, Bonanzinga classifies the principal Borel ideals that are lexicographic and investigates which principal Borel ideals are Gotzmann \cite{Bo}. There have also been a number of papers that include results on principal $p$-Borel ideals, including \cite{AH,HPV,JW,Pa,Po}.

Our goal in this paper is a bit different. We use the notion of minimal Borel generators of a Borel ideal to develop alternative ways of computing standard invariants in commutative algebra. Our approach has two primary advantages over the traditional ways of doing the computations. First, the number of Borel generators is usually far smaller than the number of ordinary minimal generators of a Borel ideal. Hence when computing invariants of Borel ideals by hand, it is often much easier to use our methods because one does not need to keep track of all the minimal monomial generators of the ideal. This is especially true when working with a principal Borel ideal. Second, our different point of view allows us to uncover connections to combinatorics and computational geometry that are not easily visible using standard techniques. It becomes natural to study principal Borel ideals, which do not appear any more interesting than other Borel ideals from the usual perspective. Using our methods, however, the Catalan numbers and generalizations arise naturally. Investigating principal Borel ideals leads us to an interesting sequence of Betti numbers with a strong connection to pointed pseudo-triangulations.  Throughout the paper, we discuss how to apply our techniques to squarefree Borel ideals as well.

Our paper is organized as follows. In Section~\ref{s:prelim}, we explain our terminology and describe how to do some fundamental operations on Borel ideals using only Borel generators. We give two methods for computing the associated primes of a Borel ideal in terms of its Borel generators in Section~\ref{s:primes}, and we compute the Alexander dual in the squarefree case. In Section~\ref{s:stanley}, we determine a Stanley decomposition for $S/B$, where $B$ is a Borel ideal, which yields a short proof of Stanley's conjecture in this case.  The decomposition also provides formulas for the Hilbert series and multiplicity of $S/B$. We explain how Catalan numbers and their generalizations arise in the Hilbert functions (and subsequently, the Betti numbers) of Borel ideals in Section~\ref{s:catalan}. In Section~\ref{s:Betti}, we describe how to compute the graded Betti numbers of a Borel ideal using only the Borel generators, and we describe several Poincar\'e series associated to $B$, where $B$ is a Borel or squarefree Borel ideal. Finally, in Section~\ref{s:ppt}, we prove an unexpected connection between the Betti numbers of certain principal Borel ideals and pointed pseudo-triangulations of particular planar point sets studied in \cite{AOSS}. While we often focus on principal Borel ideals, we illustrate how to use our methods for these ideals iteratively to do computations for general Borel ideals.

We gratefully acknowledge the computer algebra system Macaulay 2 \cite{M2}, which we used to compute examples throughout this project, and thank Dana Brunson for use of the High Performance Computing Center at Oklahoma State University. The first author was partially supported by an NSA Young Investigator Grant.

\section{Preliminaries} \label{s:prelim}

Throughout, let $S=k[x_1,\dots,x_n]$, where $k$ is a field.

\begin{definition}
Let $m$ be a monomial in $S$. A \emph{Borel move} on $m$ is an operation that sends $m$ to a monomial $m \cdot \frac{x_{i_1}}{x_{j_1}} \cdots \frac{x_{i_s}}{x_{j_s}}$, where $i_t < j_t$ for all $t$, and all $x_{j_t}$ divide $m$.
\end{definition}

\begin{definition}
A monomial ideal $B$ is a \emph{Borel ideal} if $B$ is closed under Borel moves. That is, if $m \in B$, then any monomial obtained from a Borel move on $m$ is also in $B$.
\end{definition}

When $B$ is a Borel ideal, Bayer and Stillman show in \cite[Corollary 2]{BS} that $\Ass(S/B)$ has an especially nice structure. 

\begin{theorem}\label{t:bayerstillman}
Let $B$ be a Borel ideal in $S$. If $P \in \Ass(S/B)$, then $P=(x_1,\dots,x_i)$ for some $i \le n$.
\end{theorem}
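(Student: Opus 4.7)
The plan is to exploit that for a monomial ideal $B$, every associated prime of $S/B$ has the form $P = (B:m)$ for some monomial $m \notin B$, and is generated by a subset of the variables $\{x_1,\dots,x_n\}$. Granting these standard facts, it suffices to prove the following combinatorial claim: if $x_j \in P$, then $x_i \in P$ for every $i < j$. Once this is established, the generating set of $P$ forms an initial segment of the variables, which is the desired conclusion.

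To verify the claim, I would fix such an $m \notin B$ with $P = (B:m)$ and suppose $x_j \in P$, i.e.\ $x_j m \in B$. Then some minimal monomial generator $g$ of $B$ divides $x_j m$. A quick exponent-by-exponent comparison shows that since $g \nmid m$ (as $m \notin B$) but $g \mid x_j m$, the variable $x_j$ must actually divide $g$; otherwise the divisibility $g \mid x_j m$ would force $g \mid m$, a contradiction.

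Now comes the key step, which is the only place the Borel hypothesis enters: given $i < j$ and $x_j \mid g$, the Borel move replacing one factor of $x_j$ in $g$ by $x_i$ produces a monomial $g' = g \cdot (x_i/x_j) \in B$. A direct calculation shows $g' \mid x_i m$ (the cofactor $x_i m / g'$ equals $x_j m / g$, which is a monomial). Hence $x_i m \in B$, so $x_i \in (B:m) = P$, proving the claim.

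The main (minor) obstacle is simply making sure the Borel move is actually available — one must confirm that $x_j$ appears in $g$ with positive exponent, which is exactly what the divisibility analysis above guarantees. Everything else is bookkeeping with exponents. The overall structure uses only the definition of a Borel ideal together with the general description of associated primes for monomial ideals; no Eliahou–Kervaire machinery or resolutions are required.
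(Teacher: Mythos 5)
Your argument is correct and complete. The paper itself does not prove this statement --- it is quoted from Bayer--Stillman \cite{BS} as a known result --- so there is no internal proof to compare against; your write-up supplies the standard elementary argument from scratch. The two facts you take as given (that an associated prime of a monomial ideal is $(B:m)$ for some monomial $m\notin B$, and that a monomial prime is generated by a subset of the variables) are indeed standard, and the remaining steps all check out: from $g\mid x_jm$ and $g\nmid m$ the exponent comparison forces $\deg_{x_j}(g)=\deg_{x_j}(m)+1\geq 1$, so the Borel move $g\mapsto g\cdot x_i/x_j$ is legal, and the identity $x_im/g' = x_jm/g$ shows $g'\mid x_im$, giving $x_i\in P$. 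This establishes that the variable set generating $P$ is downward closed, hence $P=(x_1,\dots,x_p)$. The only stylistic remark is that you could shorten the divisibility analysis by working directly with $\mu = x_jm/g$ (a monomial by hypothesis) and observing $x_im = g'\mu$, but this is cosmetic.
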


\begin{notation} \label{n:factorization}
Given a monomial $m$ of degree $d$, we may write $m$ uniquely in the form $m=\prod_{j=1}^{d}x_{i_{j}}$ with $i_{1}\leq i_{2}\leq \dots \leq i_{d}$. We call this expression the \emph{factorization} of $m$. We say the variable $x_{i_{r}}$ is in the \emph{$r^{\text{th}}$ position}, and $\max(m)=i_d$. We will routinely abuse notation and write $\max(m)=x_{i_d}$. We define the minimum similarly, setting $\min(m)=i_1$ (or $x_{i_1}$).
\end{notation}

\begin{example}\label{e:position}
The factorization of $a^2cd^3ef$ is $aacdddef$. The variable in the seventh position is $e$, and $\max(a^2cd^3ef)=f$. 
\end{example}


\begin{definition} \label{d:borelideal}
Let $T=\{m_1, \dots, m_s\}$ be a set of monomials. Define $\Borel(T)=\Borel(m_1,\dots,m_s)$ to be the smallest Borel ideal containing $T$.  We say that $m_1,\dots,m_s$ are \emph{Borel generators} of $\Borel(T)$.  If $T=\{m\}$ has cardinality one, we say that $\Borel(T)$ is the \emph{principal Borel ideal} generated by $m$.
\end{definition}

\begin{remark} \label{r:m2}
The computer algebra system Macaulay 2 \cite{M2} has a method for determining the smallest Borel ideal containing a set of monomials $m_1, \dots, m_s$. Typing {\tt borel monomialIdeal(a*b*e,a*c*d)}, for example, produces the smallest Borel ideal containing the monomials $abe$ and $acd$.
\end{remark}

\begin{definition} \label{d:precedes}
Factor $m_{1}=\prod_{j=1}^{r} x_{i_{j}}$ and $m_{2}=\prod_{j=1}^{s}x_{k_{j}}$.  We say that $m_{1}\bgeq m_{2}$ if $r\geq s$ and $i_{j}\leq k_{j}$ for all $j\leq s$.  In this case, we say that \emph{$m_{1}$ precedes $m_{2}$ in the Borel order}.
\end{definition}

\begin{remark} \label{r:lex}
If $m_2 \mid m_1$, then $m_1 \bgeq m_2$.   If $m_1 \bgeq m_2$, then $m_1\geq_{\text{Lex}} m_2$, where $\geq_{\text{Lex}}$ is the (ungraded) lexicographic order.
\end{remark}

\begin{example} \label{e:precedes}
$a^2$ and $ab$ precede $b^2$, as does $a^3$, but $a$ and $b^2$ are incomparable in the Borel order.
\end{example}

\begin{lemma} \label{l:borelvs}
$\Borel(T)$ is spanned as a vector space by the monomials which precede some element of $T$ in the Borel order.
\end{lemma}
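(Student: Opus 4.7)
The plan is to define $B^{\ast}$ to be the vector space spanned by all monomials that Borel-precede some $m_i \in T$, and to show both that $B^{\ast}$ is a Borel ideal containing $T$ (forcing $\Borel(T) \subseteq B^{\ast}$) and that every Borel ideal containing $T$ contains $B^{\ast}$.

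First I would verify that $B^{\ast}$ is a Borel ideal containing $T$. Containment is immediate since $m_i \bgeq m_i$. Transitivity of $\bgeq$ is routine, so for ideal closure it suffices to show $mx_j \bgeq m$ for every monomial $m$ and variable $x_j$. This follows by tracking where $x_j$ is inserted in the factorization: if $x_j$ lands in position $p$, then $mx_j$ agrees with $m$ in positions $<p$, has $x_j$ in position $p$ (whose index $j$ is $\leq$ the $p$-th index of $m$ by choice of $p$), and reads off $m$'s remaining variables shifted by one in positions $>p$. For closure under Borel moves, replacing one $x_b$ with $x_a$ for $a<b$ can only decrease the sorted factorization entrywise, so the result still Borel-precedes $m$, hence $m_i$.

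For the reverse inclusion, take $m\bgeq m_i$ with factorizations $m = \prod_{j=1}^{r}x_{l_j}$ and $m_i=\prod_{j=1}^{s}x_{k_j}$. Put $u=\prod_{j=s+1}^{r}x_{l_j}$ so that $m_i u$ lies in every Borel ideal containing $m_i$, and let $(a_1,\ldots,a_r)$ be its sorted factorization (the sorted merge of $(k_1,\ldots,k_s)$ with $(l_{s+1},\ldots,l_r)$). The key technical step will be to show $a_j\geq l_j$ for every $j$. Setting $j_0 = |\{t : l_t<l_j\}|\leq j-1$: among $k_1,\ldots,k_s$, any $t$ with $k_t<l_j$ forces $l_t\leq k_t<l_j$, so $t\leq \min(j_0,s)$; among $l_{s+1},\ldots,l_r$, any $t$ with $l_t<l_j$ forces $t\leq j_0$, combined with $t\geq s+1$ giving at most $\max(0,j_0-s)$ such indices. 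The total count of merged entries strictly below $l_j$ is therefore at most $j_0\leq j-1$, so $a_j\geq l_j$.

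Finally, I would transform $m_i u$ into $m$ by Borel moves iteratively: at each step pick the smallest $j$ with current sorted entry $a'_j>l_j$ and apply the Borel move $x_{a'_j}\mapsto x_{l_j}$ (valid since $l_j<a'_j$ and $x_{a'_j}$ divides the current monomial). By minimality of $j$, $l_j\geq l_{j-1}=a'_{j-1}$ and $l_j<a'_j\leq a'_{j+1}$, so $l_j$ slots exactly into position $j$, giving new sorted sequence $(a'_1,\ldots,a'_{j-1},l_j,a'_{j+1},\ldots,a'_r)$; this still dominates $(l_t)$ componentwise and strictly decreases $\sum_t(a'_t-l_t)$. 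The procedure terminates at $m$, so $m\in\Borel(T)$. The hardest step will be the counting argument establishing $a_j\geq l_j$; everything else amounts to bookkeeping about factorizations.
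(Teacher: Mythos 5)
Your proof is correct: both inclusions are established, and the key counting argument showing $a_j \geq l_j$ for the sorted merge, together with the termination of the Borel-move descent from $m_i u$ to $m$, is sound. The paper states this lemma without proof, so there is no argument to compare against; yours is the natural one and fills the gap completely.
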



\begin{proposition} \label{p:uniqueborelgens}
Every Borel ideal $B$ has a unique minimal set of Borel generators.  We call this set $\Bgens(B)$. 
\end{proposition}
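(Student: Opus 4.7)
The plan is to define $\Bgens(B)$ as the set of monomials in $B$ that are minimal with respect to the Borel order $\bgeq$ restricted to $B$; equivalently, $\Bgens(B) = \{m \in B : \text{no } m' \in B\setminus\{m\} \text{ satisfies } m\bgeq m'\}$. The two things to verify are (i) that $B=\Borel(\Bgens(B))$, so $\Bgens(B)$ really is a Borel generating set, and (ii) that $\Bgens(B)$ is contained in every Borel generating set of $B$, which makes it both unique and minimal.

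For (i), by Lemma~\ref{l:borelvs} it suffices to show that every $m\in B$ satisfies $m\bgeq t$ for some $t\in\Bgens(B)$. I would argue by descent: if $m\notin\Bgens(B)$, pick $m_1\in B$ with $m\bgeq m_1$ and $m\neq m_1$, and iterate to build a chain $m\bgeq m_1\bgeq m_2\bgeq\cdots$ of strictly decreasing elements of $B$. Definition~\ref{d:precedes} forces $\deg m_i\geq\deg m_{i+1}$, so degrees are non-increasing along the chain; they are also bounded below by the smallest degree of an ordinary minimal monomial generator of $B$, so the degrees eventually stabilize. Since within a fixed degree only finitely many monomials of $S$ exist, the chain must terminate at some $\bgeq$-minimal $t\in\Bgens(B)$, and transitivity of $\bgeq$ gives $m\bgeq t$.

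For (ii), let $T$ be any Borel generating set of $B$ and take $t\in\Bgens(B)$. Applying Lemma~\ref{l:borelvs} to $t\in B=\Borel(T)$ produces $s\in T$ with $t\bgeq s$; since $s\in T\subseteq B$ and $t$ is $\bgeq$-minimal in $B$, we must have $s=t$, and hence $t\in T$. Thus $\Bgens(B)\subseteq T$, which simultaneously proves uniqueness of the minimal Borel generating set and identifies it explicitly. The only step requiring real care is the well-foundedness argument in (i); everything else is a direct unpacking of Definition~\ref{d:precedes} and Lemma~\ref{l:borelvs}. It is also worth recording that $\Bgens(B)$ is automatically finite, since by Remark~\ref{r:lex} divisibility implies $\bgeq$, so any $m\in B$ that is not an ordinary minimal monomial generator of $B$ fails $\bgeq$-minimality.
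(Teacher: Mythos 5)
Your proof is correct and takes essentially the same approach as the paper: both identify $\Bgens(B)$ as the set of $\bgeq$-minimal monomials (your minimal elements of $B$ coincide with the paper's minimal elements of the finite poset $\gens(B)$, since divisibility implies $\bgeq$, as you note). You supply the well-foundedness of the descent and the containment of $\Bgens(B)$ in an arbitrary Borel generating set, details the paper compresses into ``clearly.''
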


\begin{proof}
Given a Borel ideal $B$, let $\gens(B)=\{m_1,\dots,m_s\}$ be the minimal monomial generators of $B$.  This is a finite set, partially ordered by the Borel order.  Let $T$ be the minimal monomials in this poset.  Clearly $B=\Borel(T)$.  On the other hand, if $m\in T$ and $T'$ is any subset of $\gens(B)\setminus \{m\}$, then $m\notin\Borel(T')$.  
\end{proof}

In light of Proposition \ref{p:uniqueborelgens}, we will often refer to $\Bgens(B)$ as ``the Borel generators'' of $B$.

\begin{proposition} \label{p:identifyborelgens}
Suppose $B$ is a Borel ideal and $m \in B$. Then $m \in \Bgens(B)$ if and only if for all $x_q$ dividing $m$, both $\frac{m}{x_q} \not \in B$ and $m\frac{x_{q+1}}{x_q} \not \in B$.
\end{proposition}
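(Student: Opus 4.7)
The plan is to use the characterization from the proof of Proposition~\ref{p:uniqueborelgens}, namely that $\Bgens(B)$ is the set of elements of $\gens(B)$ that are minimal in the Borel order. The condition $m/x_q \notin B$ for every $x_q \mid m$ is equivalent to saying $m \in \gens(B)$, so the nontrivial content of the proposition is translating the condition $m \cdot x_{q+1}/x_q \notin B$ into Borel-minimality.

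For the forward direction, I will assume $m \in \Bgens(B)$ and suppose for contradiction that $m' := m \cdot x_{q+1}/x_q \in B$ for some $x_q \mid m$. Pick a minimal generator $m''$ of $B$ dividing $m'$, so $m' \bgeq m''$ by Remark~\ref{r:lex}. The Borel move $x_{q+1} \mapsto x_q$ applied to $m'$ returns $m$, hence $m \bgeq m'$, and transitivity of $\bgeq$ gives $m \bgeq m''$. Borel-minimality of $m$ in $\gens(B)$ then forces $m = m''$, after which degree and divisibility considerations yield $m = m'$, which is absurd.

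For the reverse direction, assuming both conditions, the first gives $m \in \gens(B)$, and I must show that any $m' \in \gens(B)$ with $m \bgeq m'$ satisfies $m = m'$. Write $m = \prod x_{i_j}$ (degree $r$) and $m' = \prod x_{k_j}$ (degree $s$). In the case $r > s$, I take $q = \max(m) = i_r$; removing this last variable leaves the first $r-1 \geq s$ positions of the factorization unchanged, so $m/x_q \bgeq m'$, whence $m/x_q \in B$ by Lemma~\ref{l:borelvs}, contradicting the first condition. The equal-degree case is subtler. If $m \neq m'$, I pick a position $j^{**}$ with $i_{j^{**}} < k_{j^{**}}$, set $q = i_{j^{**}}$, and focus on the \emph{last} position $j^*$ at which $m$ has index $q$. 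The point is that replacing $x_q$ by $x_{q+1}$ at this specific spot alters only the coordinate at position $j^*$ in the sorted factorization; since $(k_j)$ is weakly increasing, $k_{j^*} \geq k_{j^{**}} > q$, so $q+1 \leq k_{j^*}$, and therefore $m \cdot x_{q+1}/x_q \bgeq m'$. Lemma~\ref{l:borelvs} then puts this monomial in $B$, violating the second condition. The main obstacle is precisely this choice of $j^*$: a more naive selection can disturb the sort order of the factorization and destroy coordinatewise comparability with $m'$, so it is essential to promote the last occurrence of the relevant index.
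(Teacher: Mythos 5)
The paper states this proposition without proof, so there is nothing to compare against; your argument is correct and fills that gap. Both directions check out: the forward direction correctly reduces to Borel-minimality of $m$ in $\gens(B)$ via transitivity of $\bgeq$, and in the reverse direction the one genuinely delicate point — that the factorization of $m\frac{x_{q+1}}{x_q}$ differs from that of $m$ only in the \emph{last} position $j^*$ carrying the index $q$, so that $k_{j^*}\geq k_{j^{**}}\geq q+1$ gives $m\frac{x_{q+1}}{x_q}\bgeq m'$ — is exactly the right observation and is handled correctly. This is a complete proof.
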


The codimension and projective dimension of a Borel ideal are well-understood, and can be read off from the Borel generators.

\begin{proposition} \label{p:codimpdim}
Let $B$ be Borel with Borel generators $\{m_1,\dots,m_s\}$. Then $\codim B=\max(\min(m_i))$ and $\pd S/B=\max(\max(m_i))$.  If the Borel generators are written in lexicographic order, then $\codim B=\min(m_{s})$.
\end{proposition}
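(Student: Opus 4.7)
The proof of all three statements rests on a single observation: Borel moves can only decrease both the minimum and maximum index of a monomial. If $u = m \cdot \tfrac{x_{i_1}}{x_{j_1}} \cdots \tfrac{x_{i_s}}{x_{j_s}}$ is a Borel move with each $i_t < j_t$, then $\min(u) \leq \min(m)$ and $\max(u) \leq \max(m)$, since each variable we introduce has smaller index than the variable it replaces. By Lemma~\ref{l:borelvs}, any minimal monomial generator $u$ of $B$ satisfies $u \bgeq m$ for some $m \in \Bgens(B)$; a short argument (trimming the last factor of $u$ yields a proper divisor that still precedes $m$ whenever $\deg u > \deg m$, contradicting minimality) forces $\deg u = \deg m$, at which point $u$ is obtained from $m$ by a pure Borel move. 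Combined with the inclusion $\Bgens(B) \subseteq \gens(B)$, this gives
\[
\max_{m \in \Bgens(B)} \min(m) = \max_{u \in \gens(B)} \min(u), \qquad \max_{m \in \Bgens(B)} \max(m) = \max_{u \in \gens(B)} \max(u).
\]

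For the codimension, I would apply Theorem~\ref{t:bayerstillman}: every associated prime of $S/B$ has the form $(x_1,\dots,x_i)$, so these primes form a chain under inclusion, and $\codim B$ equals the smallest $i$ with $B \subseteq (x_1,\dots,x_i)$. A monomial lies in $(x_1,\dots,x_i)$ precisely when its minimum index is at most $i$, so $\codim B = \max_{u \in \gens(B)} \min(u)$, which equals $\max_m \min(m)$ by the first displayed equality.

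For the projective dimension, I would invoke the Eliahou--Kervaire resolution \cite{EK}, which gives $\pd S/B = \max_{u \in \gens(B)} \max(u)$, and then rewrite this as $\max_m \max(m)$ using the second displayed equality.

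For the lex statement, suppose the Borel generators are listed as $m_1 \geq_{\mathrm{Lex}} \cdots \geq_{\mathrm{Lex}} m_s$. If some $m_i$ had $\min(m_i) > \min(m_s)$, then the factorization of $m_i$ would begin with $x_{\min(m_i)}$ while that of $m_s$ begins with the lex-larger variable $x_{\min(m_s)}$ (smaller index is lex-larger, and no variable of smaller index than $\min(m_s)$ divides either monomial), forcing $m_s >_{\mathrm{Lex}} m_i$ and contradicting $i < s$. Hence $\min(m_s) = \max_i \min(m_i)$, which equals $\codim B$ by the first assertion. The only nontrivial input from outside is the Eliahou--Kervaire formula; the main subtlety is the degree-matching step underpinning the two displayed equalities, and even that is routine.
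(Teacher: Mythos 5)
Your proof is correct, and its backbone — reducing statements about $\gens(B)$ to statements about $\Bgens(B)$ via the Borel order, then citing the Eliahou--Kervaire formula for the projective dimension — is exactly the paper's. The one genuine divergence is the lower bound $\codim B\geq q$, where $q=\max_i\min(m_i)$: you deduce it from Theorem~\ref{t:bayerstillman}, arguing that since $\Ass(S/B)$ is a chain, $B$ has a unique minimal prime $(x_1,\dots,x_{i_0})$ and $\codim B$ is the least $i$ with $B\subseteq(x_1,\dots,x_i)$ (this quietly also uses the standard fact that minimal primes are associated). The paper instead notes that if $q$ is attained at $m_j$, then $x_i^{\deg m_j}\bgeq m_j$ for every $i\leq q$, so $B$ contains pure powers of $x_1,\dots,x_q$ and hence an ideal of height $q$; this is more elementary and bypasses Bayer--Stillman entirely. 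The upper bound (every monomial of $B$ precedes some Borel generator, so $B\subseteq(x_1,\dots,x_q)$) is common to both arguments. You also spell out the degree-matching step ($\deg u=\deg m$ by trimming the last factor) and the lexicographic-order claim, both of which the paper leaves implicit; your treatment of each is sound.
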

\begin{proof}
By the Eliahou-Kervaire resolution \cite{EK}, the projective dimension of $S/B$ is $\max(\max(m:m\in\gens(B))$.  Let $m$ be a generator of $B$ such that $\max(m)$ is maximal, and $m_{i}$ be such that $m$ precedes $m_{i}$ in the Borel order.  Then $\max(m_{i})=\max(m)$. 

Let $q=\max(\min(m_i))$.  Then, by Borel moves, $B$ contains pure powers of $x_{i}$ for all $i\leq q$; thus $\codim B\geq q$.  On the other hand, every monomial of $B$ is contained in $(x_{1},\dots, x_{q})$, so $\codim B\leq q$.
\end{proof}

All basic operations on Borel ideals can be performed in terms of only their Borel generators.
\begin{proposition} 
Let $B_1=\Borel(T_{1})$ and $B_2=\Borel(T_{2})$ for sets of monomials $T_{1}$ and $T_{2}$.  Then $B_1+B_2=\Borel(T_{1}\cup T_{2})$.
\end{proposition}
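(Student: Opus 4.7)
The plan is to show inclusion in both directions, using only the definition of $\Borel(T)$ as the smallest Borel ideal containing $T$ (Definition \ref{d:borelideal}) together with the obvious fact that a sum of Borel ideals is again Borel.

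For the inclusion $B_1 + B_2 \subseteq \Borel(T_1 \cup T_2)$, I would argue as follows. Since $T_1 \subseteq T_1 \cup T_2$, the ideal $\Borel(T_1 \cup T_2)$ is a Borel ideal containing $T_1$, so by minimality of $\Borel(T_1)$ we get $B_1 \subseteq \Borel(T_1 \cup T_2)$. The analogous argument gives $B_2 \subseteq \Borel(T_1 \cup T_2)$, and summing yields the claimed containment.

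For the reverse inclusion, I would first observe that $B_1 + B_2$ is itself a Borel ideal: if $m \in B_1 + B_2$, then $m$ is a sum of monomials each of which lies in $B_1$ or $B_2$, and a Borel move applied to such a monomial keeps it in the same summand, which is Borel. (Alternatively, one can appeal to Lemma \ref{l:borelvs} and note that the set of monomials preceding some element of $T_1 \cup T_2$ in the Borel order is precisely the union of the corresponding sets for $T_1$ and for $T_2$.) Since $B_1 + B_2$ contains $T_1$ and $T_2$, it contains $T_1 \cup T_2$, and minimality of $\Borel(T_1 \cup T_2)$ gives the inclusion.

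There is essentially no obstacle here; the statement is a formal consequence of the universal property built into Definition \ref{d:borelideal}. The only thing to be careful about is verifying that $B_1 + B_2$ is Borel, but this is immediate from the monomial basis description in Lemma \ref{l:borelvs}.
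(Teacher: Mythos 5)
Your proof is correct; the paper states this proposition without any proof, evidently regarding it as immediate. Your double-inclusion argument via the universal property of $\Borel(T)$, together with the observation (via Lemma \ref{l:borelvs} or directly from closure under Borel moves) that $B_1+B_2$ is again Borel, is exactly the argument the authors leave to the reader.
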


\begin{proposition}\label{p:cap}
Let $u=x_{i_{1}}\cdots x_{i_{r}}$ and $v=x_{j_{1}}\cdots x_{j_{s}}$ be written in factored form, and suppose that $r\geq s$.  Put $\ell_{t}=\min(i_{t},j_{t})$ (with $\ell_{t}=i_{t}$ if $s<t\leq r$), and let $w=x_{\ell_{1}}\cdots x_{\ell_{r}}$ be the meet of $u$ and $v$ in the Borel order.  Then $\Borel(u)\cap \Borel(v)=\Borel(w)$.
\end{proposition}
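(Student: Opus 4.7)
The plan is to translate the proposition into a statement about the Borel order using Lemma \ref{l:borelvs}, and then verify it by inspecting positions. By that lemma, each of $\Borel(u)$, $\Borel(v)$, and $\Borel(w)$ is the $k$-span of the monomials which precede $u$, $v$, and $w$, respectively. Because all three are monomial ideals, the desired equality reduces to showing that a monomial $m$ satisfies $m\bgeq w$ if and only if both $m\bgeq u$ and $m\bgeq v$.

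For the forward direction, I would first check directly from the definitions that $w\bgeq u$ and $w\bgeq v$: we have $\deg w=r=\deg u\geq s=\deg v$; for $t\leq s$, $\ell_t=\min(i_t,j_t)$ is at most both $i_t$ and $j_t$; and for $s<t\leq r$, $\ell_t=i_t$. (One also checks, using the monotonicity of the $i_t$'s and $j_t$'s, that $\ell_1\leq\cdots\leq\ell_r$, so $w$ is in factored form as claimed.) The relation $\bgeq$ is transitive---immediate from its definition in terms of positions and degrees---so any $m$ with $m\bgeq w$ also satisfies $m\bgeq u$ and $m\bgeq v$.

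For the reverse direction, let $m=x_{h_1}\cdots x_{h_p}$ be in factored form with $m\bgeq u$ and $m\bgeq v$. Then $p\geq r$, and $h_t\leq i_t$ for $t\leq r$ while $h_t\leq j_t$ for $t\leq s$. Taking the tighter bound in each position yields $h_t\leq\min(i_t,j_t)=\ell_t$ for $t\leq s$ and $h_t\leq i_t=\ell_t$ for $s<t\leq r$, so $m\bgeq w$. There is no real obstacle; the only care required is in bookkeeping positions $\leq s$ separately from positions $>s$, which is necessary because $u$ and $v$ may have different degrees.
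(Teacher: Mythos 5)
Your proof is correct and complete. The paper states Proposition~\ref{p:cap} without proof, and your argument supplies exactly the intended one: reduce via Lemma~\ref{l:borelvs} to the claim that $m\bgeq w$ if and only if $m\bgeq u$ and $m\bgeq v$, then verify this position by position (together with the checks that $w$ is genuinely in factored form and that $\bgeq$ is transitive, both of which you rightly do not take for granted).
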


Using the two propositions above, many computations on a Borel ideal 
can be done in terms of principal Borel ideals.  One first works with $\Borel(m)$ for each $m\in\Bgens(B)$, then combines the results (possibly using inclusion-exclusion).  Such an approach allows simpler notation, so we will adopt it without comment wherever possible in the remainder of the paper.

\begin{proposition}\label{p:product}
 $\Borel(u)\cdot \Borel(v)=\Borel(uv)$
\end{proposition}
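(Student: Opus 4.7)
The plan is to prove the two inclusions separately. For $\Borel(uv) \subseteq \Borel(u) \cdot \Borel(v)$, I would first observe that the product of two Borel ideals is again Borel: any Borel move on a product $m_1 m_2$ replaces some variable $x_j$ dividing $m_1 m_2$, which therefore divides at least one of $m_1$ or $m_2$, so the move may be realized inside that factor and the result is still a product of elements of the two ideals. Since $uv = u \cdot v$ lies in $\Borel(u) \cdot \Borel(v)$ and the latter is Borel, it must contain the smallest Borel ideal containing $uv$, which is $\Borel(uv)$.

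For the reverse inclusion, every monomial in $\Borel(u) \cdot \Borel(v)$ can be written as a product $u' v'$ with $u' \in \Borel(u)$ and $v' \in \Borel(v)$, since any element of the ideal product is a multiple of a product of generators and the extra factor can be absorbed into $u'$. By Lemma \ref{l:borelvs}, $u' \bgeq u$ and $v' \bgeq v$, so it suffices to verify the monotonicity statement $u' v' \bgeq uv$. Write the sorted factorizations $u = \prod x_{b_j}$, $v = \prod x_{d_j}$, $u' = \prod x_{a_j}$, $v' = \prod x_{c_j}$, so that $a_j \leq b_j$ for $j \leq \deg u$ and $c_j \leq d_j$ for $j \leq \deg v$. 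The sorted factorization of $uv$ is the merge of the $b$'s with the $d$'s, and similarly the sorted factorization of $u'v'$ is the merge of the $a$'s with the $c$'s. Using the identity that the $j$-th entry of a sorted tuple $s$ equals $\min\{k : |\{i : s_i \leq k\}| \geq j\}$, the componentwise comparison reduces to showing that for each threshold $k$, the merged $(a,c)$-sequence has at least as many entries $\leq k$ as the merged $(b,d)$-sequence. This splits additively into the two independent inequalities $|\{j : a_j \leq k\}| \geq |\{j : b_j \leq k\}|$ and $|\{j : c_j \leq k\}| \geq |\{j : d_j \leq k\}|$, each immediate from the hypotheses since the $a$'s and $b$'s are sorted.

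The main obstacle is the monotonicity of the Borel order under multiplication; the trick of converting the componentwise comparison of sorted tuples into a threshold-count statement is what makes the merge behave cleanly and turns the desired estimate into an additive one.
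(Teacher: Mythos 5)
Your proof is correct, and in fact the paper states Proposition~\ref{p:product} without any proof, so there is nothing to compare against; your argument supplies exactly the two facts one needs: that a product of Borel ideals is again Borel (checked one variable swap at a time, which suffices since a general Borel move is a composition of such swaps), and that the Borel order is monotone under multiplication, i.e.\ $u'\bgeq u$ and $v'\bgeq v$ imply $u'v'\bgeq uv$. Your threshold-counting argument for the latter is a clean way to handle the merge of the two sorted factorizations, and together with Lemma~\ref{l:borelvs} it gives both inclusions.
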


\begin{corollary}\label{c:up}
 Let $\mathbf{m}=(x_{1},\dots, x_{n})$ be the homogeneous maximal ideal.  Then we have $\mathbf{m}\cdot\Borel(u)= \Borel(ux_{n})$.
\end{corollary}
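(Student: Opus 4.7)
The plan is to reduce the statement to a direct application of Proposition~\ref{p:product} by first identifying the maximal ideal $\mathbf{m}$ itself as a principal Borel ideal. Specifically, I would like to show $\mathbf{m} = \Borel(x_n)$, after which the product formula gives
\[
\mathbf{m}\cdot \Borel(u) = \Borel(x_n)\cdot \Borel(u) = \Borel(x_n \cdot u) = \Borel(ux_n).
\]

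The only substantive step is therefore verifying $\mathbf{m} = \Borel(x_n)$. I would argue two inclusions. For $\mathbf{m} \subseteq \Borel(x_n)$, starting from $x_n$ and applying the Borel move sending $x_n \mapsto x_n \cdot \frac{x_i}{x_n} = x_i$ (which is valid since $i < n$ and $x_n \mid x_n$) shows that every variable $x_i$ lies in $\Borel(x_n)$, and hence so does the ideal they generate. For the reverse inclusion, $\mathbf{m}$ is manifestly a Borel ideal (any Borel move on a non-constant monomial yields another non-constant monomial, as Borel moves preserve degree), and it contains $x_n$, so by minimality of $\Borel(x_n)$ we get $\Borel(x_n)\subseteq \mathbf{m}$. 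Alternatively, one could invoke Lemma~\ref{l:borelvs}: the monomials preceding $x_n$ in the Borel order are exactly the monomials of positive degree, and these span $\mathbf{m}$.

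There is essentially no obstacle here; the result is a one-line corollary once $\mathbf{m}$ is recognized as $\Borel(x_n)$. The only thing worth being mildly careful about is to confirm that Proposition~\ref{p:product} is being applied to principal Borel ideals in the correct way, which it is, since both factors are principal.
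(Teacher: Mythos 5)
Your proposal is correct and matches the paper's intended derivation: the statement is presented as an immediate corollary of Proposition~\ref{p:product}, obtained precisely by recognizing $\mathbf{m}=\Borel(x_{n})$ and applying the product formula. Your verification of the identity $\mathbf{m}=\Borel(x_{n})$ is sound and fills in the one step the paper leaves implicit.
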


\begin{proposition} Write $m=x_{i_{1}}\cdots x_{i_{r}}$ in factored form, and let $s$ be minimal such that $i_{s}\geq j$.  Then $(\Borel(m):x_{j})=\Borel(\frac{m}{x_{i_s}})$.  If instead $j\geq i_{r}$, then the colon ideal is $(\Borel(m):x_{j})=\Borel(m)$.  
\end{proposition}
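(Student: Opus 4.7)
The plan is to work entirely in terms of the Borel order and Lemma~\ref{l:borelvs}. Since $\Borel(m)$ is spanned as a $k$-vector space by monomials preceding $m$, a monomial $f$ lies in $(\Borel(m):x_j)$ if and only if $x_j f \bgeq m$. So the task reduces to showing that the condition ``$x_j f \bgeq m$'' is equivalent to ``$f \bgeq m/x_{i_s}$'' in the first case and to ``$f \bgeq m$'' in the second.

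The key bookkeeping step is to understand the factorization of $x_j f$. If $f = x_{k_1}\cdots x_{k_t}$ is in factored form, then $x_j f$ is obtained by inserting $j$ into this sorted list at some position $u$, determined by how many of the $k_p$ satisfy $k_p \le j$. With this notation, $x_j f \bgeq m$ translates to $t+1 \ge r$ together with position-by-position inequalities that split into three ranges: $p < u$ (where the $p$th factor of $x_j f$ is $k_p$), $p = u$ (the factor is $j$), and $u < p \le r$ (the factor is $k_{p-1}$).

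For the main case $j \le i_r$, I would argue the forward direction as follows. If $f \bgeq m/x_{i_s}$, then in particular the first $s-1$ factors of $f$ satisfy $k_p \le i_p < j$ (using minimality of $s$), so $x_j$ must be inserted at position $u \ge s$. Then a short sub-case analysis ($u = s$ versus $u > s$) combined with the fact that $k_p \le j \le i_s \le i_p$ whenever $s \le p < u$ verifies each inequality required for $x_j f \bgeq m$. For the converse, the inequality $j \le i_u$ at the insertion position (when $u \le r$) forces $u \ge s$ by minimality of $s$, and once $u$ is located relative to $s$, reading off the inequalities at positions $1,\dots,r-1$ recovers exactly $f \bgeq m/x_{i_s}$ after re-indexing.

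Case 2, where $j > i_r$, is essentially free: any position $u \le r$ would force the contradiction $j \le i_u \le i_r$, so $u > r$. But then positions $1,\dots,r$ of $x_j f$ agree with $k_1,\dots,k_r$, so $x_j f \bgeq m$ immediately yields $f \bgeq m$; the reverse inclusion $\Borel(m) \subseteq (\Borel(m):x_j)$ is automatic. The main obstacle is purely notational: keeping track of the one-step index shift between $m$ and $m/x_{i_s}$ caused by the removal of the $s$th factor, synchronized with the insertion position $u$ of $x_j$. Once the case split on whether $u < s$, $u = s$, $s < u \le r$, or $u > r$ is laid out, each inequality is a direct comparison.
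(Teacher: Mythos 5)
The paper states this proposition without proof (it is one of the unproved ``basic operations'' facts in Section~\ref{s:prelim}), so there is no argument of the authors' to compare against; your verification is correct and is the natural one. The reduction via Lemma~\ref{l:borelvs} to the purely combinatorial equivalence ``$x_jf\bgeq m$ iff $f\bgeq m/x_{i_s}$'' is exactly right, and the three inequalities you isolate ($k_p\le i_p<j$ for $p<s$, which forces the insertion position $u\ge s$; $k_p\le j\le i_s\le i_p$ for $s\le p<u$; and $k_{p-1}\le i_p$ for $u<p\le r$, coming from the index shift in $m/x_{i_s}$) are precisely what make both directions work. Two small points worth writing out if you expand the sketch: in the converse of Case~1 the sub-case $u>r$ is not covered by ``reading off positions $1,\dots,r-1$'' --- there you get $f\bgeq m$ directly and must invoke $m\bgeq m/x_{i_s}$ (divisibility) plus transitivity of $\bgeq$; and the proposition's second case as printed says $j\ge i_r$, but when $j=i_r$ the first formula is the correct one ($m/x_{i_r}$ does lie in the colon ideal), so your reading of Case~2 as $j>i_r$ is the right interpretation of what is presumably a typo.
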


\subsection{Squarefree Borel ideals}

\begin{definition}
We say that an ideal is \emph{squarefree} if it is generated by squarefree monomials and \emph{squarefree Borel} if it is generated by the squarefree monomials of some Borel ideal.  For a set of squarefree monomials $T$, the \emph{squarefree Borel ideal generated by $T$} is the smallest squarefree Borel ideal containing $T$, denoted $\sfborel(T)$.
\end{definition}

Almost all the results on Borel ideals stated above hold (with appropriate modification) for squarefree Borel ideals.  For example, $\sfBorel(T)$ is generated by the squarefree monomials which precede some monomial of $T$ in the Borel order.  The exceptions are Proposition~\ref{p:product}, which doesn't make sense in the squarefree context, and Proposition~\ref{p:codimpdim} and Corollary~\ref{c:up} which become:

\begin{proposition}\label{p:sqcodimpdim}Let $B$ be squarefree Borel with Borel generators $m_{1},\dots, m_{s}$.  The projective dimension of $B$ is $\max(\max(m_{i})-\deg(m_{i}))$.  The codimension of $B$ is $\max(\min(m_{i}))$.  
\end{proposition}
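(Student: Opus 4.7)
The plan is to prove the codimension and projective-dimension assertions separately, using the following common preliminary observation: if $m$ is a minimal generator of $B$ with $m\bgeq m_i$ for some Borel generator $m_i$, then $\deg(m)=\deg(m_i)$. Otherwise $\deg(m)>\deg(m_i)$, and the squarefree monomial $m'=m/x_{\max(m)}$ still satisfies the componentwise inequalities defining $m'\bgeq m_i$, so lies in $B$ by the squarefree version of Lemma~\ref{l:borelvs}; since $m'$ properly divides $m$, this contradicts minimality. Once the degrees agree, writing $m=x_{k_1}\cdots x_{k_d}$ and $m_i=x_{l_1}\cdots x_{l_d}$ in factored form, the relation $m\bgeq m_i$ reduces to $k_t\leq l_t$ for all $t$, so in particular $\min(m)\leq\min(m_i)$ and $\max(m)\leq\max(m_i)$.

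For the projective dimension, I would cite the squarefree analog of the Eliahou--Kervaire resolution (due to Aramova, Herzog, and Hibi), which gives $\pd(B)=\max\{\max(m)-\deg(m):m\in\gens(B)\}$. The preliminary observation shows this maximum is attained at some Borel generator, since $\deg(m)=\deg(m_i)$ and $\max(m)\leq\max(m_i)$ whenever $m\bgeq m_i$. The reverse inequality is immediate, and the formula follows.

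For the codimension, set $q=\max_i\min(m_i)$. The preliminary observation gives $\min(m)\leq q$ for every $m\in\gens(B)$, whence $B\subseteq(x_1,\dots,x_q)$ and $\codim B\leq q$. For the reverse inequality, fix a Borel generator $m_i=x_{i_1}\cdots x_{i_d}$ with $i_1=q$, so that $i_t\geq q+t-1$. Any ascending $d$-tuple $s_1<\cdots<s_d$ with $s_t\leq i_t$ yields a squarefree monomial $x_{s_1}\cdots x_{s_d}$ preceding $m_i$ and therefore lying in $B$. Given any minimal prime $P=(x_{j_1},\dots,x_{j_r})$ of $B$ with $r<q$, let $u_1<\cdots<u_d$ be the $d$ smallest elements of $[n]\setminus\{j_1,\dots,j_r\}$; a pigeonhole count shows $u_t\leq r+t\leq(q-1)+t\leq i_t$, producing a monomial of $B$ disjoint from $\{x_{j_1},\dots,x_{j_r}\}$, a contradiction. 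Hence $\codim B\geq q$.

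The main obstacle is the codimension lower bound: the non-squarefree proof of Proposition~\ref{p:codimpdim} relied on the pure powers $x_k^d\in B$ for $k\leq q$, which are unavailable in the squarefree setting. The replacement above is the greedy combinatorial witness, which leverages the ladder of inequalities $i_t\geq q+t-1$ forced by squarefreeness of $m_i$ to guarantee a monomial of $B$ avoiding any too-small candidate prime.
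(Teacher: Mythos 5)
Your proof is correct, and it is more self-contained than the paper's, which disposes of both claims by citation: the projective dimension formula is attributed to the squarefree part of the Eliahou--Kervaire resolution (Charalambous--Evans, Aramova--Herzog--Hibi), and the codimension formula is simply referred to \cite[Proposition 4.1]{HS}. For the projective dimension you invoke the same resolution, but you add the (genuinely needed, and only implicit in the paper) reduction from all of $\gens(B)$ to the Borel generators; your preliminary observation that a minimal generator $m$ with $m\bgeq m_i$ must satisfy $\deg(m)=\deg(m_i)$, hence $\max(m)\leq\max(m_i)$ componentwise, is exactly the right mechanism and is the correct squarefree replacement for the corresponding step in the proof of Proposition~\ref{p:codimpdim}. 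For the codimension your argument is a genuine alternative to the citation: you correctly identify that the pure-power trick from the non-squarefree case is unavailable, and your greedy witness --- using $i_t\geq q+t-1$ (forced by squarefreeness of the chosen Borel generator with $\min(m_i)=q$) together with the pigeonhole bound $u_t\leq r+t$ on the complement of a candidate prime of height $r<q$ --- produces a monomial of $B$ outside that prime, which is exactly what is needed since minimal primes of squarefree monomial ideals are generated by variables. Both halves check out; the only cost of your route is length, and the benefit is that the codimension statement no longer rests on an external reference.
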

\begin{proof}  A squarefree Borel ideal is resolved by the squarefree part of the Eliahou-Kervaire resolution (\cite{CE, AHH-sqfreelex}), yielding the formula for projective dimension.  For codimension, see \cite[Proposition 4.1]{HS}.
\end{proof}

\begin{proposition}\label{p:squp}
Let $\mathbf{m}=(x_{1},\dots, x_{n})$ be the homogeneous maximal ideal, and let $s$ be maximal such that $x_{s}$ does not divide $u$.  Then the squarefree part of $\mathbf{m}\cdot\sfBorel(u)$ is $\sfBorel(ux_{s})$. 
\end{proposition}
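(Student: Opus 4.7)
My plan is to prove the two inclusions between $\sfBorel(ux_s)$ and the squarefree monomials of $\mathbf{m}\cdot\sfBorel(u)$. Write $u = x_{i_1}\cdots x_{i_r}$ in factored form, and let $c$ be the position at which $s$ is inserted to obtain the factorization of $ux_s$ (so $i_{c-1} < s < i_c$, with the conventions $i_0 = 0$, $i_{r+1} = \infty$).

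For the containment $\sfBorel(ux_s) \subseteq \mathbf{m}\cdot\sfBorel(u)$, it suffices to handle the degree-$(r+1)$ squarefree generators $w'$ of $\sfBorel(ux_s)$, since higher-degree squarefree elements are multiples of these. Given $w' \bgeq ux_s$ squarefree of degree $r+1$, my plan is to remove the $c$-th variable from $w'$'s factorization. Deleting the $c$-th entry from the factorization of $ux_s$ recovers $u$, so the inequalities $w'_q \leq (ux_s)_q$ transport directly to $(w'/x_{w'_c})_q \leq u_q$, giving $w'/x_{w'_c} \in \sfBorel(u)$ and hence $w' = x_{w'_c}\cdot(w'/x_{w'_c}) \in \mathbf{m}\cdot\sfBorel(u)$.

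For the reverse inclusion, suppose $w$ is squarefree and lies in $\mathbf{m}\cdot\sfBorel(u)$, written $w = x_j v$ with $v \in \sfBorel(u)$. Squarefreeness of $w$ forces $v$ to be squarefree, and hence divisible by some squarefree degree-$r$ generator $v_1$ with $v_1 \bgeq u$ and $x_j$ coprime to $v_1$. My plan is to show $v_1 x_j \bgeq ux_s$; then $v_1 x_j$ is a generator of $\sfBorel(ux_s)$ dividing $w$, so $w \in \sfBorel(ux_s)$.

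The combinatorial claim $v_1 x_j \bgeq ux_s$ is the main obstacle, and it is the only place where the maximality of $s$ really enters. I would recast the Borel inequality between squarefree monomials as a counting statement: writing $T = \supp(v_1 x_j)$ and $U^+ = \supp(ux_s)$, the inequality $v_1 x_j \bgeq ux_s$ is equivalent to $|T \cap [1,k]| \geq |U^+ \cap [1,k]|$ for every $k \in \{1,\ldots,n\}$. When $k < s$, the $x_s$-indicator on the right vanishes and the inequality reduces to $v_1 \bgeq u$; when $k \geq j$, the added $x_j$ on the left matches the added $x_s$ on the right. The delicate case is $k \in [s, j-1]$, which can only arise if $j > s$: maximality of $s$ forces $\{s+1,\ldots,n\} \subseteq \supp(u)$, so $j \in \supp(u)$, while $j \notin \supp(v_1)$ by coprimeness. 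This extra ``missing'' index in $v_1$ forces the strict inequality $|\supp(v_1) \cap [1,k]| \geq |\supp(u) \cap [1,k]| + 1$, which exactly compensates for $x_j$ not yet contributing on the left.
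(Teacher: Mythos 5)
The paper states this proposition without proof, so there is nothing to compare against; your argument is correct and fills that gap. Both inclusions check out: deleting the $c$-th entry of the factorization of a degree-$(r+1)$ generator $w'\bgeq ux_s$ does land on a squarefree monomial preceding $u$, and the counting reformulation of $\bgeq$ for squarefree monomials of equal degree is the right tool for the converse. The only place you should expand is the final strict inequality: the ``missing index'' $j\in\supp(u)\setminus\supp(v_1)$ by itself only gives $|\supp(v_1)\cap[1,j-1]|\geq|\supp(u)\cap[1,j-1]|+1$, i.e.\ strictness at $k=j-1$; to push this down to every $k$ with $s\leq k<j$ you need to invoke maximality of $s$ a second time, observing that $[k+1,j-1]\subseteq\supp(u)$, so that $|\supp(u)\cap[1,j-1]|=|\supp(u)\cap[1,k]|+(j-1-k)$ while $|\supp(v_1)\cap[1,j-1]|\leq|\supp(v_1)\cap[1,k]|+(j-1-k)$, and subtracting gives the desired $|\supp(v_1)\cap[1,k]|\geq|\supp(u)\cap[1,k]|+1$. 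With that one line added, the proof is complete.
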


\section{Associated primes} \label{s:primes}
In this section, we compute the associated primes of a Borel ideal.  Our main tools are the following:

\begin{theorem}[Bayer-Stillman] Suppose that $P$ is an associated prime of the Borel ideal $B$.  Then $P=(x_{1},\dots,x_{p})$ for some $p$.
\end{theorem}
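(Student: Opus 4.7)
The plan is to combine the standard colon-ideal characterization of associated primes for monomial ideals with closure under Borel moves. Recall that for any monomial ideal $B$, every associated prime has the form $P = (B : m)$ for some monomial $m \notin B$, and a prime ideal expressible as such a monomial colon is itself generated by a subset of the variables; so I may write $P = (x_{i_1}, \ldots, x_{i_p})$ with $i_1 < i_2 < \cdots < i_p$. The goal is to show that the index set $\{i_1, \ldots, i_p\}$ equals $\{1, 2, \ldots, p\}$, i.e., that it is an initial segment of $\{1,\dots,n\}$.

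I would argue by contradiction. Assume the index set is not an initial segment, and let $j$ be the smallest positive integer with $x_j \notin P$ yet $x_k \in P$ for some $k > j$. The containment $x_k \in P = (B:m)$ tells us that $x_k m \in B$. Since $x_k$ obviously divides $x_k m$ and $j < k$, the operation
\[
x_k m \;\longmapsto\; x_k m \cdot \frac{x_j}{x_k} \;=\; x_j m
\]
is a valid Borel move. Borel closure of $B$ forces $x_j m \in B$, i.e., $x_j \in (B:m) = P$, contradicting the choice of $j$. Hence the indices of $P$ form an initial segment and $P = (x_1, \ldots, x_p)$.

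The only obstacle worth flagging is bookkeeping: one must carefully invoke the right standard facts (that associated primes of monomial ideals arise as colons by monomials, and that a monomial prime is necessarily generated by variables), and verify the Borel move is legal. The latter is trivial here, since the denominator $x_k$ of the move appears as a fresh factor in $x_k m$. No appeal to deeper results such as the Eliahou--Kervaire resolution is needed; the proof is essentially a single Borel-move trick applied to a witness of the associated prime.
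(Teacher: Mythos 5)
Your proof is correct. Note that the paper does not actually prove this statement -- it is quoted from Bayer and Stillman \cite[Corollary 2]{BS} -- so there is no internal argument to compare against. Your argument is the standard one: reduce to the fact that an associated prime of a monomial ideal is a colon ideal $(B:m)$ by a monomial and hence generated by variables, then observe that $x_k \in (B:m)$ and $j<k$ give $x_j m = (x_k m)\cdot\frac{x_j}{x_k} \in B$ by a legal Borel move (legal because $x_k$ divides $x_k m$), so the variable indices appearing in $P$ form an initial segment. The one hypothesis you should cite explicitly rather than wave at is the first standard fact (that for monomial ideals one may take the witness of an associated prime to be a monomial); with that in hand the rest is exactly the single Borel-move trick you describe, and the proof is complete.
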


\begin{theorem}\label{t:ass}
Suppose that $(x_1,\dots,x_p)$ is an associated prime of the Borel ideal $B$.  Then there exists some Borel generator $m\in\Bgens(B)$ such that $x_p\mid m$ and $\Ann_{S/B}\frac{m}{x_p}=(x_1,\dots,x_p)$.
\end{theorem}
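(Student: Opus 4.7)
The plan is to produce a candidate Borel generator from an annihilator witness and then verify both required properties.

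First, pick a monomial $u\notin B$ with $\Ann_{S/B}(u)=(x_1,\dots,x_p)$, which exists because $(x_1,\dots,x_p)$ is associated. Since $x_p u\in B$, Lemma~\ref{l:borelvs} yields a Borel generator $m\in\Bgens(B)$ with $x_p u\bgeq m$. The two remaining tasks are to show $x_p\mid m$ and to compute $\Ann_{S/B}(m/x_p)$.

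For the divisibility $x_p\mid m$, write $m=\prod_{j=1}^{d}x_{a_j}$ in factored form and let $s$ be the number of entries of $u$'s factorization that are at most $p$, so that $x_p$ sits in position $s+1$ of the factorization of $x_p u$. If $s\geq d$, the first $d$ entries of $x_p u$ coincide with those of $u$, forcing $u\bgeq m$ and hence $u\in B$, a contradiction. Otherwise the precedence $x_p u\bgeq m$ gives $a_{s+1}\geq p$. Suppose toward contradiction that $x_p\nmid m$; then $a_{s+1}\geq p+1$. The key observation is that the factorizations of $x_p u$ and $x_{p+1}u$ agree in every position except the $(s+1)$st, where $x_p u$ has entry $p$ and $x_{p+1}u$ has entry $p+1$---this holds uniformly whether or not $x_{p+1}$ ties with existing variables of $u$. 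Since $a_{s+1}\geq p+1$, it follows that $x_{p+1}u\bgeq m$ as well, so $x_{p+1}u\in B$, contradicting $x_{p+1}\notin\Ann_{S/B}(u)$. Hence $x_p\mid m$. Verifying this single-position-difference claim across all the tie cases is the main technical step.

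With $x_p\mid m$ established, computing $\Ann_{S/B}(m/x_p)$ is short. For $j\leq p$, the product $x_j(m/x_p)=m\cdot x_j/x_p$ equals $m$ (if $j=p$) or is obtained from $m$ by a Borel move (if $j<p$), so it lies in $B$, giving $(x_1,\dots,x_p)\subseteq\Ann_{S/B}(m/x_p)$. Conversely, Proposition~\ref{p:identifyborelgens} applied to the Borel generator $m$ at the variable $x_p$ gives $m\cdot x_{p+1}/x_p=x_{p+1}(m/x_p)\notin B$, so $x_{p+1}\notin\Ann_{S/B}(m/x_p)$; and if $x_j(m/x_p)\in B$ held for some $j>p+1$, then a single Borel move would place $x_{p+1}(m/x_p)$ in $B$ as well, a contradiction. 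Thus $\Ann_{S/B}(m/x_p)=(x_1,\dots,x_p)$.
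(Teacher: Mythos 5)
Your first paragraph (establishing $x_p\mid m$) is correct, and the tie-case analysis you flag as the technical step does go through. The problem is in the second paragraph, where you compute $\Ann_{S/B}(m/x_p)$. You rule out $x_{p+1}\in\Ann_{S/B}(m/x_p)$ (and hence $x_j$ for all $j>p$), but the annihilator is the monomial ideal $(B:m/x_p)$, and excluding the \emph{variables} $x_{p+1},\dots,x_n$ does not exclude higher-degree monomials in those variables: the colon ideal could be, say, $(x_1,\dots,x_p,x_{p+1}^2)$, which contains $(x_1,\dots,x_p)$ strictly but contains no $x_j$ with $j>p$. This is not a hypothetical worry. Take $B=\Borel(b^2,bc^2)$ with $p=2$ and $m=b^2$: your second paragraph, applied verbatim, would conclude $\Ann_{S/B}(b)=(a,b)$, yet $bc^2\in B$, so $c^2\in\Ann_{S/B}(b)$ and the annihilator is $(a,b,c^2)$. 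The underlying issue is that your second paragraph never uses the hypothesis that $u$ is a $p$-socle --- it uses only $m\in\Bgens(B)$ and $x_p\mid m$ --- and under those hypotheses alone the conclusion is simply false. This is exactly the subtlety the paper's proof is organized around: it first replaces the socle witness $\mu$ by one that is minimal under the moves $\mu\mapsto\mu/x_q$ and $\mu\mapsto\mu x_{q+1}/x_q$, and Lemma~\ref{l.bgens} then repeatedly uses the dichotomy ``either $\nu$ is a $p$-socle or $\nu x_{p+1}^N\in B$ for $N\gg0$'' to control these higher powers.

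Your approach can be repaired, but it needs a genuinely new step tying $\Ann_{S/B}(m/x_p)$ back to $u$. Your own position analysis shows more than you use: since every entry of $u$ beyond position $s$ exceeds $p$, the precedence $x_pu\bgeq m$ forces $a_{s+2}>p$ (when $s+2\le d$), so the last occurrence of $x_p$ in $m$ is exactly in position $s+1$; deleting position $s+1$ from $x_pu$ and the last $x_p$ from $m$ then gives $u\bgeq m/x_p$. From there, if $(m/x_p)x_{p+1}^N\in B$ for some $N$, compatibility of the Borel order with multiplication yields $ux_{p+1}^N\bgeq (m/x_p)x_{p+1}^N$, hence $ux_{p+1}^N\in B$, contradicting that $u$ is a $p$-socle. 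With that addition (and a one-line remark that $m/x_p\notin B$ by Proposition~\ref{p:identifyborelgens}, so the annihilator is proper), your argument becomes a correct and rather different proof from the paper's. As written, though, the containment $\Ann_{S/B}(m/x_p)\subseteq(x_1,\dots,x_p)$ is not established.
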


Throughout the section, $B$ is a Borel ideal.

\begin{notation} \label{n.primes}
If $m=x_{i_1} \cdots x_{i_r}$ is a squarefree monomial, write $P_m$ for the prime ideal on the support of $m$, $P_{m}=(x_{i_1},\dots,x_{i_r})$.
If $q\in \mathbb{Z}$, write $P_{q}$ for the prime ideal $(x_{1},\dots,x_{q})$.
\end{notation}

\begin{definition} \label{d.psocle}
$m$ is a \emph{$p$-socle} for $B$ if $\Ann_{S/B}(m) = (x_1, \dots, x_p)$.
\end{definition}

\begin{lemma} \label{l.bgens}
Suppose $m$ is a $p$-socle for $B$. Assume that for all $x_q$ dividing $m$, $m\frac{x_{q+1}}{x_q}$ and $\frac{m}{x_q}$ are not $p$-socles. Then $mx_p \in \Bgens(B)$.
\end{lemma}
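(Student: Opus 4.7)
The plan is to apply Proposition~\ref{p:identifyborelgens} to $M := mx_p$. Since $x_p \in \Ann_{S/B}(m) = (x_1, \dots, x_p)$, we have $M = x_p m \in B$, and it remains to check $M/x_q \notin B$ and $Mx_{q+1}/x_q \notin B$ for every $x_q \mid M$. The case $q = p$ is immediate from the definition of $p$-socle: $M/x_p = m \notin B$ (since $\Ann_{S/B}(m)$ is proper) and $Mx_{p+1}/x_p = mx_{p+1} \notin B$ (since $x_{p+1} \notin \Ann_{S/B}(m)$). The sub-case $q < p$ of the first condition is also quickly disposed of: if $M/x_q = x_p(m/x_q) \in B$, the Borel move $x_p \to x_q$ produces $m \in B$, contradicting $m \notin B$.

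The substantive cases are the remainder: $q > p$ in $M/x_q$, and both $q < p$ and $q > p$ in $Mx_{q+1}/x_q$. For each, let $m^{\star}$ denote the relevant auxiliary monomial---either $m/x_q$ or $mx_{q+1}/x_q$. Arguing by contradiction, I would suppose $x_p m^{\star} \in B$ and deduce that $m^{\star}$ is a $p$-socle, contradicting the hypothesis. This requires showing $m^{\star} \notin B$ (from a single Borel move recovering $m$: multiply by $x_q$ when $m^{\star} = m/x_q$, or apply $x_{q+1} \to x_q$ when $m^{\star} = mx_{q+1}/x_q$) together with $\Ann_{S/B}(m^{\star}) = (x_1, \dots, x_p)$. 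The containment $(x_1, \dots, x_p) \subseteq \Ann_{S/B}(m^{\star})$ is immediate from $x_p m^{\star} \in B$ and Borel moves replacing $x_p$ with smaller variables.

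The crucial step is the reverse containment: if $\mu m^{\star} \in B$ for some monomial $\mu$, then the same single-step operation (multiplication by $x_q$ or the Borel move $x_{q+1} \to x_q$) applied to $\mu m^{\star}$ yields $\mu m \in B$, and thus $\mu \in \Ann_{S/B}(m) = (x_1, \dots, x_p)$. The main obstacle I anticipate is the bookkeeping---ensuring the intended manipulation is always a valid Borel move (the swapped variable must divide the monomial, and the replacement must strictly decrease the index) and matching each sub-case to the correct half of the hypothesis ($m/x_q$ versus $mx_{q+1}/x_q$ not being a $p$-socle). Conceptually, however, all four non-trivial sub-cases reduce to the same observation: annihilator data about $m^{\star}$ pulls back, via one Borel move or one multiplication by $x_q$, to annihilator data about $m$.
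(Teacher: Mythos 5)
Your proposal is correct and follows essentially the same route as the paper: verify the two conditions of Proposition~\ref{p:identifyborelgens} for $mx_p$ by contradiction, using the hypothesis that the neighbors $\frac{m}{x_q}$ and $m\frac{x_{q+1}}{x_q}$ are not $p$-socles, with membership and annihilator data transferred between $m$ and each neighbor by a single multiplication or Borel move. The only cosmetic difference is how each contradiction is closed: the paper extracts a power $x_{p+1}^N$ from the strictly-larger annihilator of the neighbor and pushes it back to contradict $m$ being a $p$-socle (reducing the first condition to the single case $x_q=\max(mx_p)$ along the way), whereas you push $\Ann_{S/B}(m)=(x_1,\dots,x_p)$ forward to conclude the neighbor is itself a $p$-socle, contradicting the hypothesis directly.
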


\begin{proof}
Because $x_p \in \Ann_{S/B}(m)$, $mx_p\in B$.  Thus, we must show that $\frac{mx_p}{x_{q}}\notin B$ and $mx_{p}\frac{x_{q+1}}{x_{q}}\notin B$ for any $x_{q}$ dividing $mx_{p}$.

First, we show that $m\frac{x_p}{x_q} \not \in B$. Suppose it were in $B$. Then $m \frac{x_p}{\max(mx_p)} \in B$ by the Borel move taking $\max(mx_p)$ to $x_q$. 
If $\max(mx_{p})=x_{p}$, it would follow that $m\in B$, contradicting the fact that $m$ is a $p$-socle.  Hence $\max(mx_{p})=\max(m)$.  
Therefore $\frac{m}{\max(m)}x_p \in B$, meaning $x_p \in \Ann_{S/B}\left(\frac{m}{\max(m)}\right)$. Because $B$ is Borel, this implies that $(x_1,\dots,x_p) \subseteq \Ann_{S/B}\left(\frac{m}{\max(m)}\right)$. Since $\frac{m}{\max(m)}$ is not a $p$-socle, there is a monomial in the variables $x_{p+1}, \dots, x_n$ in $\Ann_{S/B}\left(\frac{m}{\max(m)}\right)$, and hence $\frac{m}{\max(m)}x_{p+1}^N \in B$ for some $N \gg 0$. Multiplying by $\max(m) $ proves that $mx_{p+1}^N \in B$, contradicting the fact that $m$ is a $p$-socle. 

We now prove that $mx_{p}\frac{x_{q+1}}{x_q} \not \in B$. Suppose it were, and let $\mu = \frac{mx_{q+1}}{x_q}$. Then $x_p \in \Ann_{S/B}(\mu)$, and thus $(x_1,\dots,x_p) \subseteq \Ann_{S/B}(\mu)$. By hypothesis, $\mu$ is not a $p$-socle, so $\mu x_{p+1}^N \in B$ for some $N \gg 0$. After a Borel move sending $x_{q+1}$ to $x_q$, we have $\mu x_{p+1}^N \frac{x_q}{x_{q+1}} = mx_{p+1}^N \in B$, contradicting the fact that $m$ is a $p$-socle. Therefore $\frac{mx_{q+1}}{x_q} x_p \not \in B$.
\end{proof}

Theorem \ref{t:ass} follows from Lemma \ref{l.bgens}.

\begin{proof}[Proof of Theorem \ref{t:ass}]
If $(x_{1},\dots, x_{p})\in \Ass(S/B)$, then there is some monomial
$\mu$ which is a $p$-socle.  If there is a variable $x_{q}$ and a
monomial $\mu'$ of the form
$\mu'=\frac{\mu}{x_{q}}$ or $\mu'=\mu\frac{x_{q+1}}{x_{q}}$ which is also a
$p$-socle, replace $\mu$ with $\mu'$.  This process must terminate
since there are finitely many monomials of degree at most
$\deg(\mu)$.  Thus, without loss of generality, the $p$-socle $\mu$
may be chosen to satisfy the hypotheses of Lemma \ref{l.bgens}.  Hence
we may take $m=\mu x_{p}$.  
\end{proof}

Theorem \ref{t:ass} yields the following efficient algorithm for
computing $\Ass(S/B)$ when $B$ is a Borel ideal.

\begin{algorithm}\label{a:ass}  This algorithm computes $\Ass(S/B)$
  for a Borel ideal $B$.
\begin{itemize}
\item[] For each $p$, do the following:
\item[Step 1:]  List all the $m\in\Bgens(B)$ which are divisible by $x_{p}$.
\item[Step 2:]  For each $m$ identified in Step 1, determine if $\frac{m}{x_{p}}$ is a $p$-socle.  If it is, then $(x_{1},\dots, x_{p})$ is associated to $S/B$.  If none are, then $(x_{1},\dots, x_{p})$ is not associated.
\end{itemize}
\end{algorithm}



If $B=\Borel(m)$ is a principal Borel ideal, it is possible to read
off $\Ass(S/B)$ without having to verify any annihilators, as the next
proposition shows.

\begin{proposition}\label{p:principal}
Suppose that $B=\Borel(m)$ and $x_{p}$ divides $m$.  Then
$\frac{m}{x_{p}}$ is a $p$-socle.
\end{proposition}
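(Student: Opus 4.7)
The plan is to verify the two inclusions in the desired equality $\Ann_{S/B}(\frac{m}{x_p}) = (x_1,\dots,x_p)$. The forward inclusion is immediate: for each $i\leq p$, the product $x_i\cdot\frac{m}{x_p} = m\cdot\frac{x_i}{x_p}$ is obtained from $m$ by a single Borel move (legal since $i\leq p$ and $x_p\mid m$), hence lies in $\Borel(m)=B$. Thus each $x_i$ with $i\leq p$ annihilates $\frac{m}{x_p}$ modulo $B$.

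For the reverse inclusion, since the annihilator is a monomial ideal, it suffices to prove that no monomial $\nu$ with support in $\{x_{p+1},\dots,x_n\}$ satisfies $\nu\cdot\frac{m}{x_p}\in B$. The case $\nu=1$ is handled by noting that $\deg(\frac{m}{x_p}) = \deg(m)-1$, which is below the minimum degree of any element of $B = \Borel(m)$. For a nontrivial such $\nu$, I would invoke Lemma~\ref{l:borelvs} and prove the contrapositive: that $\mu':=\nu\cdot\frac{m}{x_p}$ does not precede $m$ in the Borel order.

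To establish this, write $m = x_{i_1}\cdots x_{i_d}$ in factored form and let $k$ be the \emph{largest} index with $i_k = p$, so that $i_{k+1},\dots,i_d$ (if any) are all strictly greater than $p$. The factored form of $\mu'$ comes from sorting the multiset obtained by removing one copy of $p$ from the indices of $m$ and adjoining the indices of $\nu$, every one of which exceeds $p$. Consequently, the only values $\leq p$ in $\mu'$'s multiset are $i_1,\dots,i_{k-1}$, so positions $1$ through $k-1$ of $\mu'$ agree with those of $m$, while position $k$ of $\mu'$ is strictly greater than $p$. Since position $k$ of $m$ equals $p$, the preceding condition fails at position $k$, so $\mu' \nbgeq m$ and $\mu'\notin B$.

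The main subtlety, and the step requiring the most care, is the choice of $k$: one must take the largest (rather than smallest) occurrence of $p$ in the factored form of $m$, so that no further copies of $p$ remain in $\mu'$'s multiset to occupy position $k$ and let $\mu'$ catch up to $m$ there. With this choice, the comparison at position $k$ immediately exposes the discrepancy, and the result follows.
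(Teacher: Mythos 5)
Your proof is correct and follows essentially the same route as the paper's: show $(x_1,\dots,x_p)$ annihilates $\frac{m}{x_p}$ via Borel moves, then rule out any monomial supported on $x_{p+1},\dots,x_n$ by checking that the product fails to precede $m$ in the Borel order. The paper leaves the last step as a one-line assertion (after reducing to $\frac{m}{x_p}x_{p+1}^N$); your explicit comparison at the position of the \emph{last} occurrence of $x_p$ in the factorization of $m$ is exactly the right way to justify it.
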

\begin{proof}
Clearly $x_{p}\in \Ann_{S/B}(\frac{m}{x_{p}})$.  Thus, if $\frac{m}{x_{p}}$
were not a $p$-socle, we would have $\frac{m}{x_{p}}x_{p+1}^{N}\in B$
for large $N$.  But $\frac{m}{x_{p}}x_{p+1}^{N}$ does not precede $m$
in the Borel order for any positive $N$.  
\end{proof}

\begin{corollary}\label{c:principal}
Suppose that $B=\Borel(m)$ is a principal Borel ideal.  Then
$(x_{1},\dots,x_{p})\in \Ass(S/B)$ if and only if $x_{p}$ divides $m$.
\end{corollary}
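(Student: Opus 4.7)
The plan is to deduce both implications essentially for free from results already proved. Since $B=\Borel(m)$ is principal, the unique Borel generator is $m$, so $\Bgens(B)=\{m\}$ by Proposition~\ref{p:uniqueborelgens}. This observation will drive both directions.

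For the forward implication, suppose $x_{p}$ divides $m$. Then Proposition~\ref{p:principal} says directly that $\frac{m}{x_{p}}$ is a $p$-socle, i.e.\ $\Ann_{S/B}(\frac{m}{x_{p}})=(x_{1},\dots,x_{p})$. By the standard characterization of associated primes via socle elements, this gives $(x_{1},\dots,x_{p})\in\Ass(S/B)$.

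For the reverse implication, suppose $(x_{1},\dots,x_{p})\in \Ass(S/B)$. Theorem~\ref{t:ass} produces a Borel generator $m'\in\Bgens(B)$ with $x_{p}\mid m'$. But since $B$ is principal, $\Bgens(B)=\{m\}$, forcing $m'=m$ and hence $x_{p}\mid m$.

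There is no real obstacle here: all the substantive work has been done in Theorem~\ref{t:ass} (the general existence of a Borel-generator witness for each associated prime) and Proposition~\ref{p:principal} (the automatic socle property in the principal case). The corollary is just the combination of these two facts with the observation that $|\Bgens(B)|=1$, so the proof will be only a couple of lines.
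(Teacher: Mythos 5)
Your proof is correct and matches the paper's intended argument: the corollary is stated without proof immediately after Proposition~\ref{p:principal}, precisely because the forward direction is that proposition (a $p$-socle exhibits the associated prime) and the reverse direction is Theorem~\ref{t:ass} combined with $\Bgens(B)=\{m\}$. Nothing further is needed.
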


We thank the referee for suggesting the following corollary.

\begin{corollary}\label{c:anyset}
Let $\mathcal{P}$ be any set of prime ideals all having the form $P_{j}=(x_{1},\dots,x_{j})$.  Then there exists a principal Borel ideal $B$ such that $\mathcal{P}=\Ass(S/B)$. Moreover, $\mathcal{P}=\Ass(S/\Borel(m))$ if and only if $\supp(m) = \{x_{j_1},\dots,x_{j_r}\}$.

\begin{proof}
Let $B=\borel(m)$, where $\displaystyle{m=\prod_{P_{j}\in \mathcal{P}}x_{j}}$.  By Corollary~\ref{c:principal}, $P_{i}\in\Ass(B)$ if and only if $P_{i}\in \mathcal{P}$. The second statement also follows immediately from Corollary~\ref{c:principal}.
\end{proof}
\end{corollary}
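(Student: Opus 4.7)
The plan is to observe that this corollary is essentially a repackaging of Corollary~\ref{c:principal}, which completely characterizes the associated primes of a principal Borel ideal in terms of the divisibility of the generator $m$. So almost no new work is needed; the main task is bookkeeping.

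First I would enumerate the given set of primes as $\mathcal{P}=\{P_{j_{1}},\dots,P_{j_{r}}\}$ and define the monomial $m=x_{j_{1}}\cdots x_{j_{r}}$. For the first statement, it suffices to set $B=\Borel(m)$ and apply Corollary~\ref{c:principal}: the divisors $x_{p}$ of $m$ are precisely $x_{j_{1}},\dots,x_{j_{r}}$, so $P_{p}\in\Ass(S/B)$ exactly when $p\in\{j_{1},\dots,j_{r}\}$, i.e.\ when $P_{p}\in\mathcal{P}$. Hence $\Ass(S/B)=\mathcal{P}$, proving existence.

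For the ``moreover'' clause, I would run the same equivalence chain in both directions. If $m$ is any monomial with $\supp(m)=\{x_{j_{1}},\dots,x_{j_{r}}\}$, then Corollary~\ref{c:principal} says $P_{p}\in\Ass(S/\Borel(m))$ iff $x_{p}\mid m$ iff $x_{p}\in\supp(m)$ iff $P_{p}\in\mathcal{P}$, so $\Ass(S/\Borel(m))=\mathcal{P}$. Conversely, if $\Ass(S/\Borel(m))=\mathcal{P}$, then again by Corollary~\ref{c:principal}, $x_{p}\mid m$ iff $P_{p}\in\mathcal{P}$, which forces $\supp(m)=\{x_{j_{1}},\dots,x_{j_{r}}\}$.

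There is no real obstacle here — Corollary~\ref{c:principal} is doing all the work. The only thing to be a little careful about is not to confuse ``$\supp(m)=\{x_{j_{1}},\dots,x_{j_{r}}\}$'' (a condition on which variables appear in $m$, independent of multiplicities) with ``$m=x_{j_{1}}\cdots x_{j_{r}}$'' (a specific squarefree choice). The first statement of the corollary only requires one particular $m$, for which the squarefree product is the natural minimal choice, while the ``moreover'' clause says that the multiplicities of the variables in $m$ are entirely irrelevant to $\Ass(S/\Borel(m))$ — only the support matters.
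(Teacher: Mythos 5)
Your proof is correct and takes the same route as the paper: set $m$ to be the product of the $x_{j}$ with $P_{j}\in\mathcal{P}$ and apply Corollary~\ref{c:principal} in both directions. The only difference is that you spell out the support-versus-squarefree distinction explicitly, which the paper leaves implicit.
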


We would like to be able to determine $\Ass(S/B)$ for a general Borel ideal $B$ while avoiding socle computations entirely.  The notion of $d$-truncation, defined below, will allow us to do this.

\begin{definition}\label{d:d-trunc}
Let $m$ be a monomial with factorization $x_{i_1} \cdots x_{i_r}$, and let $d$ be a positive integer. If $d\le \deg m$, define the \emph{$d$-truncation} of $m$, denoted $\trunc_{d}(m)$, to be $x_{i_1} \cdots x_{i_d}$. If $d > \deg m$, then set $\trunc_d(m)$ to be $m$ itself. For a monomial ideal $I$, define the \emph{$d$-truncation} of $I$ to be the ideal $\trunc_{d}(I)=(\trunc_{d}(m) : m \in I)$.
\end{definition}

\begin{example}
The $5$-truncation of $m=x_1^2x_3x_4^3x_6x_7$ is $x_1^2x_3x_4^2$, while the $d$-truncation of $m$ is $m$ itself for $d \ge 8$.
\end{example}

\begin{lemma}\label{l:equivtrunc}
Let $B$ be a Borel ideal. Then \[\trunc_{d}(B)
= \Borel(\trunc_{d}(m) : m \in \Bgens(B)).\]  In particular, the $d$-truncation of a Borel ideal is Borel.
\end{lemma}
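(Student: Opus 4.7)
The plan is to establish both inclusions using the following monotonicity lemma as the main technical ingredient: if $m \bgeq m'$ in the Borel order, then $\trunc_{d}(m) \bgeq \trunc_{d}(m')$. This lemma is proved by a direct case analysis on degrees. If $\deg m' \le d$ then $\trunc_{d}(m') = m'$, and the required position-wise inequalities hold in the first $\deg m'$ slots by hypothesis; if $\deg m' > d$ (so also $\deg m > d$), both truncations have degree exactly $d$, and the first $d$ position-wise inequalities from $m \bgeq m'$ restrict to the analogous ones for the truncations.

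For $\trunc_{d}(B) \subseteq \Borel(\trunc_{d}(m_i) : m_i \in \Bgens(B))$, I take a monomial of $\trunc_{d}(B)$, which has the form $\nu \cdot \trunc_{d}(m)$ with $m \in B$ and $\nu$ a monomial. By Lemma~\ref{l:borelvs}, $m$ precedes some $m_i \in \Bgens(B)$, so the monotonicity lemma yields $\trunc_{d}(m) \bgeq \trunc_{d}(m_i)$. Then $\trunc_{d}(m)$ lies in $\Borel(\trunc_{d}(m_i))$, and so does its multiple $\nu \cdot \trunc_{d}(m)$.

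For the reverse inclusion (together with the final statement that $\trunc_{d}(B)$ is itself Borel), it suffices to show that $\trunc_{d}(B)$ is closed under Borel moves, since it trivially contains each $\trunc_{d}(m_i)$. So I take $\mu = \nu \cdot \trunc_{d}(m) \in \trunc_{d}(B)$ with $m \in B$, and consider a Borel move $\mu \mapsto \mu \cdot x_a/x_j$. If $x_j$ divides $\nu$, the result is still a monomial multiple of $\trunc_{d}(m)$ and we are done. Otherwise $x_j = x_{i_k}$ for some position $k \le d$ in the factorization $m = x_{i_1} \cdots x_{i_r}$; then the parallel Borel move $m \mapsto m' := m \cdot x_a/x_{i_k}$ stays in $B$, and a direct factorization bookkeeping shows $\trunc_{d}(m') = \trunc_{d}(m) \cdot x_a/x_{i_k}$. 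Indeed, inserting the smaller variable $x_a$ into the sorted factorization of $m/x_{i_k}$ places it at some position $\le k \le d$, so the first $d$ entries of $m'$ are exactly those of $\trunc_{d}(m)$ with $x_{i_k}$ replaced by $x_a$.

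The main obstacle is this factorization bookkeeping, namely spelling out precisely how the sorted factorization of $m'$ differs from that of $m$ and verifying that the first $d$ slots rearrange as claimed. Once that step is in hand, both inclusions and the fact that $\trunc_{d}(B)$ is Borel drop out of the framework above.
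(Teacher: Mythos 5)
Your proposal is correct and follows essentially the same route as the paper: the substantive inclusion $\trunc_{d}(B)\subseteq \Borel(\trunc_{d}(m) : m \in \Bgens(B))$ rests on the monotonicity of $\trunc_{d}$ with respect to the Borel order ($m\bgeq m'$ implies $\trunc_{d}(m)\bgeq\trunc_{d}(m')$), which the paper merely asserts and you prove by the correct case analysis on $\deg m'$ versus $d$. Your treatment of the reverse inclusion---showing $\trunc_{d}(B)$ is closed under Borel moves, using that replacing $x_{i_k}$ ($k\le d$) by a smaller variable reinserts it at a position at most $k$ and so leaves positions beyond $d$ untouched---supplies a step the paper compresses into ``it suffices to show,'' and it simultaneously yields the ``in particular'' clause.
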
 

\begin{remark}
It is not the case that $\trunc_{d}(I)=(\trunc_{d}(m) : m \in \gens(I))$ for an arbitrary monomial ideal $I$.  For example, let $I=(a^{3},b^{3})$ and put $d=2$.  Then the right-hand side above is $(a^{2},b^{2})$.  However, $ab^{3}\in I$, so the left-hand side contains $ab$.  
\end{remark}

\begin{proof}[Proof of Lemma~\ref{l:equivtrunc}.]  It suffices to show $\trunc_{d}(B)\subseteq \Borel(\trunc_{d}(m) : m \in \Bgens(B))$.  Suppose that $\mu\in \trunc_{d}(B)$.  Then there is some $m\in B$ such that $\mu=\trunc_{d}(m)$.  Thus, there is some Borel generator $w$ for $B$ such that $m$ precedes $w$ in the Borel order.  It follows that $\mu$ precedes $\trunc_{d}(w)$ in the Borel order, so $\mu\in \Borel(\trunc_{d}(m) : m \in \Bgens(B))$ as desired.
\end{proof}

We begin with some observations about how $p$-socles relate to the $d$-truncations of Borel generators.

\begin{lemma}\label{l:claimA}
Let $B$ be a Borel ideal, and suppose $m \in \Bgens(B)$. Let $d$ be the maximal index such that $x_p$ is in the $d^{\text{th}}$ position in $m$. Set $\mu = \frac{\trunc_{d}(m)}{x_p}$. Then $\frac{m}{x_p}$ is a $p$-socle for $B$ if and only if $\mu$ is a $p$-socle for $\trunc_d(B)$.
\end{lemma}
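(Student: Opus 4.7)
Write $\nu=m/x_p$. By the choice of $d$, the factored form of $m$ satisfies $i_{d-1}\le i_d=p$ and (if $r>d$) $p<i_{d+1}$, so every factor of $\mu$ has index $\le p$ and every factor of $\nu/\mu$ has index $>p$. For each $i\le p$, the Borel move $x_p\to x_i$ on $m$ gives $\nu x_i\in B$, and a direct computation of factored forms shows $\trunc_d(\nu x_i)=\mu x_i$, hence $\mu x_i\in\trunc_d(B)$. Thus $(x_1,\dots,x_p)$ lies automatically in both $\Ann_{S/B}(\nu)$ and $\Ann_{S/\trunc_d(B)}(\mu)$, and the lemma reduces to the equivalence
\[
\bigl(\exists\,h\in k[x_{p+1},\dots,x_n]\bigr)\;\mu h\in\trunc_d(B)\ \Longleftrightarrow\ \bigl(\exists\,h'\in k[x_{p+1},\dots,x_n]\bigr)\;\nu h'\in B,
\]
where $h$ and $h'$ range over monomials (with $h=1$ or $h'=1$ witnessing $\mu\in\trunc_d(B)$ or $\nu\in B$ respectively).

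For $(\Rightarrow)$, I would apply Lemmas~\ref{l:equivtrunc} and~\ref{l:borelvs} to write $\mu h\bgeq\trunc_d(w)$ for some $w=x_{l_1}\cdots x_{l_s}\in\Bgens(B)$, splitting on $\deg w$. When $\deg w\le d$ one has $\trunc_d(w)=w$, so $\mu h\bgeq w$ forces $\mu h\in B$, and multiplying by $x_{i_{d+1}}\cdots x_{i_r}$ yields $\nu h\in B$. When $\deg w>d$, the $d$-th entry of $\mu h$ is the smallest factor $f_1>p$ of $h$, so the inequality $f_1\le l_d$ forces $l_d>p$. I would then take $h'=x_{l_d}^{\,s-d+1}\in k[x_{p+1},\dots,x_n]$ and check $\nu h'\bgeq w$ position-by-position: the first $d-1$ positions are still $i_j\le l_j$, while for $d\le j\le s$ the $(j-d+1)$-st entry of the sorted merge of $(i_{d+1},\dots,i_r)$ with $s-d+1$ copies of $l_d$ is at most $l_d\le l_j$, since the copies alone supply at least $j-d+1$ entries that are $\le l_d$.

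For $(\Leftarrow)$, the containment $B\subseteq\trunc_d(B)$ immediately gives $\trunc_d(\nu h')\in\trunc_d(B)$. The factored form of $\nu h'$ has $i_1,\dots,i_{d-1}$ in its first $d-1$ positions (every remaining factor strictly exceeds $p\ge i_{d-1}$), and its $d$-th position is some $c>p$ drawn from $\{i_{d+1},\dots,i_r\}\cup\{\text{factors of }h'\}$, unless $r=d$ and $h'=1$, in which case $\trunc_d(\nu)=\mu$ itself lies in $\trunc_d(B)$. Either way we obtain the needed monomial multiple $\mu h\in\trunc_d(B)$ with $h\in k[x_{p+1},\dots,x_n]$.

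The delicate step is the case $\deg w>d$ of $(\Rightarrow)$: the original $h$ need not have enough degree to make $\nu h$ precede $w$ in the Borel order, so it is essential to pad with $s-d+1$ copies of $x_{l_d}$, both to boost the degree of $\nu h'$ past $s=\deg w$ and to ensure enough sufficiently small factors to beat $l_j$ at every intermediate position.
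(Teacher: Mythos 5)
Your proof is correct and takes essentially the same route as the paper's: both directions pass through Lemma~\ref{l:equivtrunc} to compare a witness against $\trunc_d(w)$ for a Borel generator $w$ of $B$, lift it back into $B$ by padding with variables of index greater than $p$, and handle the converse by truncating a witness $\nu h'\in B$. Your explicit choice $h'=x_{l_d}^{\,s-d+1}$ together with the position-by-position verification of $\nu h'\bgeq w$ is a careful, correct instantiation of the step the paper compresses into ``applying Borel moves.''
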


\begin{proof}
Write $B^{*}=\trunc_{d}(B)$.  

First, suppose that $\frac{m}{x_{p}}$ is a $p$-socle for $B$.  If $\mu$ were not a $p$-socle for $B^{*}$, it would follow that $\mu x_{p+1}^{N}\in B^{*}$ for some large $N$.  Since $B^{*}$ is generated in degrees less than or equal to $d$, it follows that $\mu x_{p+1}\in B^{*}$.  Thus there is a Borel generator $\nu^{*}$ of $B^{*}$ which is preceded in the Borel order by $\mu x_{p+1}$, and a Borel generator $\nu$ of $B$ such that $\nu^{*}=\trunc_{d}(\nu)$.  Observe that, for some $q\geq p+1$, $\nu$ has $x_{q}$ in the $d^{\text{th}}$ place; in particular $\frac{\nu}{\nu^{*}}\in k[x_{p+1},\dots,x_{n}]$.  We may multiply $\nu$ by any monomial without leaving $B$, so in particular, it follows that $\nu x_{n}^{N}\in B$ for $N$ greater than the degree of $m$.  

Thus we have \[
\nu x_{n}^{N} =
(\nu^{*}) \left(\frac{\nu}{\nu^{*}}\right) (x_{n}^{N}) \in B.
\]
Applying Borel moves, we have
\[(\mu x_{p+1}) (x_{p+1}^{M})\left(\frac{m}{\mu x_{p}} x_{n}^{N'}\right) \in B,\]
for nonnegative integers $M$ and $N'$.
Rearranging,
\[\frac{m}{\mu x_{p}}\mu x_{p+1} x_{p+1}^{M}x_{n}^{N'}=\frac{m}{x_{p}}x_{p+1}^{M+1}x_{n}^{N'}\in B,\]
contradicting the assumption that $\frac{m}{x_{p}}$ is a $p$-socle for $B$.


Conversely, suppose that $\frac{m}{x_{p}}$ is not a $p$-socle for $B$.  Then, since $m=\frac{m}{x_{p}}x_{p}\in B$, it follows that $\frac{m}{x_{p}}x_{p+1}^{N}\in B$ for some $N$.  In particular, $\trunc_{d}(\frac{m}{x_{p}}x_{p+1}^{N})=\mu x_{p+1}\in B^{*}$, so that $\mu$ is not a $p$-socle for $B^{*}$.

\end{proof}

\begin{lemma}\label{l:lemmaB}
Let $B$ be a Borel ideal, and suppose $m$ is a monomial with $\max(m)=x_p$. If $m \in \Bgens(B)$ and $\deg m \ge \deg m'$ for all $m' \in \Bgens(B)$, then $\frac{m}{x_p}$ is a $p$-socle.
\end{lemma}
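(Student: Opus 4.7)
The plan is to verify both inclusions of $\Ann_{S/B}(m/x_p) = (x_1,\dots,x_p)$ directly. The inclusion $(x_1,\dots,x_p) \subseteq \Ann_{S/B}(m/x_p)$ is immediate from Borel moves on $m$: we have $(m/x_p)x_p = m \in B$, and for each $i < p$, the Borel move sending a single copy of $x_p$ (which divides $m$) to $x_i$ produces $(m/x_p)x_i \in B$, so $x_i \in \Ann_{S/B}(m/x_p)$.

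For the reverse inclusion, I would show that no monomial $\eta \in k[x_{p+1},\dots,x_n]$ other than $\eta = 1$ lies in $\Ann_{S/B}(m/x_p)$; the identity case is handled by Proposition~\ref{p:identifyborelgens}, which guarantees $m/x_p \notin B$ because $m \in \Bgens(B)$. Suppose for contradiction that $(m/x_p)\eta \in B$ for some nontrivial $\eta \in k[x_{p+1},\dots,x_n]$. By Lemma~\ref{l:borelvs} there is a Borel generator $m' \in \Bgens(B)$ with $(m/x_p)\eta \bgeq m'$. Writing factorizations $m = x_{i_1}\cdots x_{i_d}$ (so $i_d = p$), $\eta = x_{j_1}\cdots x_{j_t}$ (so $p < j_1 \le \cdots \le j_t$), and $m' = x_{k_1}\cdots x_{k_s}$, the factorization of $(m/x_p)\eta$ is precisely $x_{i_1},\dots,x_{i_{d-1}},x_{j_1},\dots,x_{j_t}$, and the max-degree hypothesis forces $s \le d$.

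I would then finish with a short case split on $s$. If $s \le d-1$, the first $s$ positions of $(m/x_p)\eta$ all come from $m/x_p$, so $i_\ell \le k_\ell$ for every $\ell \le s$ and hence $m \bgeq m'$; since $\deg m > \deg m'$ this means $m \ne m'$, so $m' \in \Bgens(B)$ alone already Borel-generates $m$, contradicting the minimality guaranteed by Proposition~\ref{p:uniqueborelgens}. If $s = d$, the $d$-th position of $(m/x_p)\eta$ is $x_{j_1}$, so $k_d \ge j_1 > p = i_d$; combined with $i_\ell \le k_\ell$ for $\ell < d$ this still yields $m \bgeq m'$, and $m \ne m'$ because $i_d < k_d$, again contradicting minimality. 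The main obstacle I anticipate is the bookkeeping of the factorization of $(m/x_p)\eta$ when $\eta$ has several factors; the max-degree hypothesis is exactly what prevents the Borel comparison with $m'$ from reaching into the ``$\eta$ part'' of the factorization except at the single decisive position $d$, where $j_1 > p$ is what closes the argument.
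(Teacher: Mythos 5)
Your proof is correct. It follows the same overall strategy as the paper's — the easy inclusion via Borel moves, then a contradiction from $(m/x_p)\eta\in B$ with $\eta$ supported in $x_{p+1},\dots,x_n$, using the max-degree hypothesis to cap the degree of the witnessing generator — but the finishing move differs. The paper specializes $\eta$ to $x_{p+1}^N$, uses the degree bound to extract the degree-$d$ monomial $\tfrac{m}{x_p}x_{p+1}=m\tfrac{x_{p+1}}{x_p}$ from $B$, and contradicts Proposition~\ref{p:identifyborelgens}; you instead keep $\eta$ general, invoke Lemma~\ref{l:borelvs} to produce a Borel generator $m'$ with $(m/x_p)\eta\bgeq m'$, and show positionwise that $m\bgeq m'$ with $m\neq m'$, contradicting the minimality of $\Bgens(B)$ (equivalently, the antisymmetry of $\bgeq$ on the set of minimal elements from Proposition~\ref{p:uniqueborelgens}). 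Your route costs a bit more factorization bookkeeping but avoids the slightly delicate ``which must be $\tfrac{m}{x_p}x_{p+1}$'' step of the paper, which really requires an extra Borel move to justify; the paper's route is shorter once Proposition~\ref{p:identifyborelgens} is in hand. Both are valid, and your case split on $s\leq d-1$ versus $s=d$ is handled correctly, including the decisive observation that position $d$ of $(m/x_p)\eta$ carries an index $j_1>p=i_d$.
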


\begin{proof}
Note that $\frac{m}{x_p} x_p \in B$, meaning $(x_1,\dots,x_p) \subseteq \Ann_{S/B}(\frac{m}{x_{p}})$. If $\frac{m}{x_p}$ is not a $p$-socle, then $\frac{m}{x_p} x_{p+1}^N \in B$ for some large $N$. Consequently, $\frac{m}{x_p}x_{p+1}^N$ is divisible by some monomial in $B$ of degree $(\deg m)$, which must be $\frac{m}{x_p}x_{p+1}$. Hence $\frac{m}{x_p}x_{p+1} \in B$, but this contradicts $m \in \Bgens(B)$. Therefore $\frac{m}{x_p}$ is a $p$-socle.
\end{proof}

\begin{algorithm}\label{a:nocompute} This algorithm computes $\Ass(S/B)$ for a Borel ideal $B$ without computing socles. Suppose $B$ is generated in degree at most $d$.
\begin{itemize}
\item[Step 1:] For $1 \le i \le d$, compute $\Bgens(\trunc_i(B))$.
\item[Step 2:] For each $m \in \Bgens(\trunc_i(B))$ of degree $i$, $(x_1,\dots,x_{\max(m)}) \in \Ass(S/B)$.
\item[Step 3:] List all primes found in Step 2.
\end{itemize}
\end{algorithm}

The trade-off with Algorithm~\ref{a:ass} is clear. In Algorithm~\ref{a:ass}, for each Borel generator, one has to compute the annihilator of several monomials (or at least compute enough to know whether it is possible for $\frac{m}{x_p}$ to be a $p$-socle). In Algorithm~\ref{a:nocompute}, one needs to compute the Borel generators of all of the $i$-truncations of $B$, but can then read off $\Ass(S/B)$ directly from that list of Borel generators with no further work.

We prove that Algorithm~\ref{a:nocompute} gives a complete list of the primes in $\Ass(S/B)$.

\begin{proof}[Proof of Algorithm~\ref{a:nocompute}]
We begin by proving that the algorithm identifies all elements of $\Ass(S/B)$. We induct on $d$, the maximum degree of a minimal generator of $B$. Let $P_q=(x_1,\dots,x_q)$, and suppose $P_q \in \Ass(S/B)$. If $d=1$, then $B=P_q=\Borel(x_q)$, and the algorithm identifies $P_q$.

If $d > 1$, then there exists $m \in \Bgens(B)$ such that $\frac{m}{x_q}$ is a $q$-socle. Let $e$ be the maximum position of $x_q$ in $m$. If $e=\deg m$, then the algorithm identifies $P_q$ from $m$. If not, then by Lemma~\ref{l:claimA}, $\trunc_{e-1}(m)$ is a $q$-socle for $\trunc_e(B)$. Therefore $P_q$ is associated to $\trunc_e(B)$. Because $e < m \le d$, by induction, the algorithm identifies $P_q$.

Conversely, we need to prove that any prime that Algorithm~\ref{a:nocompute} returns is actually in $\Ass(S/B)$. Suppose there exists a positive integer $e$ such that $m$ is a monomial of degree $e$ in $\Bgens(\trunc_e(B))$, and $\max(m)=q$. Then by Lemma~\ref{l:lemmaB}, $\frac{m}{x_q}$ is a $q$-socle for $\trunc_e(B)$. Let $M \in \Bgens(B)$ be the monomial last in lex order among those monomials whose $e$-truncation is $m$. We have two cases to consider. 

First, suppose $M$ does not have $x_q$ in position $e+1$. Then by Lemma~\ref{l:claimA}, $\frac{M}{x_q}$ is a $q$-socle for $B$. Therefore $P_q \in \Ass(S/B)$. If $M$ does have $x_q$ in position $e+1$, let $f$ be maximal such that $M$ has $x_q$ in position $f$.  Now $\trunc_{f}(M)$ is a minimal Borel generator of $\trunc_{f}(B)$ by the choice of $M$, and, by Lemma~\ref{l:lemmaB}, $mx_{q}^{f-e-1}=\frac{\trunc_{f}(M)}{x_{q}}$ is a $q$-socle for $\trunc_{f}(B)$, and $M$ does not have $x_{q}$ in the $(f+1)^{\text{st}}$ position.  Thus, Lemma~\ref{l:claimA} applies, and $P_{q}\in \Ass(S/B)$ as above.
\end{proof}

\begin{example}\label{e:simplenocompute}
Consider the ideal $B=\Borel(ade,c^4)$ in $k[a,b,c,d,e]$. Then $\trunc_4(B)=\Borel(ade,c^4)$, which shows that $(a,b,c)$ is an associated prime. Moreover, $\trunc_3(B)=\Borel(ade,c^3)$, giving the new prime $(a,b,c,d,e)$. The 2-truncation is $\Borel(ad,c^2)$, so we have another associated prime, $(a,b,c,d)$. Finally, $\trunc_1(B)=\Borel(a,c)=\Borel(c)$, giving no new associated primes because $a$ is not a Borel generator of the 1-truncation. Therefore $\Ass(S/B)=\{(a,b,c),(a,b,c,d),(a,b,c,d,e)\}$.
\end{example}

\subsection{Associated primes for squarefree Borels (Alexander duals)} \label{ss.sqfreeass}

The associated primes of a squarefree ideal correspond to the generators of its Alexander dual, so if $B$ is squarefree Borel, computing $B^{\vee}$ is equivalent to computing $\Ass(S/B)$.  Because $(I+J)^{\vee}=I^{\vee}\cap J^{\vee}$ (since $\gens(I+J)=\gens(I) \cup \gens(J)$), it suffices to compute $B^{\vee}$ for principal squarefree Borel ideals $B$.

\begin{theorem}\label{t:sqfreeass}
Let $m=x_{i_{1}}\dots x_{i_{s}}$ be a squarefree monomial, and $B=\sfborel(m)$.  Then 
\[
B^{\vee}=\sfborel(x_{1}x_{2} \cdots x_{i_{1}}, x_{2}x_{3}\cdots x_{i_{2}},\dots, x_{s}x_{s+1}\cdots x_{i_{s}}).
\]
\end{theorem}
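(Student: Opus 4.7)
The plan is to show $B^{\vee}=C$ for $C:=\sfborel(u_{1},\dots,u_{s})$ with $u_{k}:=x_{k}x_{k+1}\cdots x_{i_{k}}$, by checking that both ideals contain the same squarefree monomials. I invoke the standard fact from Alexander duality: for a squarefree monomial ideal $I$, a squarefree $\nu$ lies in $I^{\vee}$ iff $\supp(\nu)\cap \supp(\mu)\neq\emptyset$ for every minimal generator $\mu$ of $I$. The squarefree analogue of Lemma~\ref{l:borelvs} (stated right after Proposition~\ref{p:squp}) identifies the minimal generators of $B=\sfborel(m)$ as the squarefree monomials $x_{j_{1}}\cdots x_{j_{s}}$ with $j_{1}<\cdots<j_{s}$ and $j_{k}\leq i_{k}$ for all $k$.

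First, I would characterize membership in $B^{\vee}$ combinatorially. A squarefree $\nu$ lies outside $B^{\vee}$ precisely when there is such a minimal generator $\mu$ disjoint from $\supp(\nu)$, i.e., an increasing sequence $j_{1}<\cdots<j_{s}$ with $j_{k}\in A_{k}:=\{1,\dots,i_{k}\}\setminus \supp(\nu)$. Because the $A_{k}$ are nested ($A_{1}\subseteq A_{2}\subseteq\cdots\subseteq A_{s}$), the greedy choice $j_{k}:=$ the $k$-th smallest element of $A_{s}$ succeeds iff $|A_{k}|\geq k$ for every $k$. Taking the contrapositive gives: $\nu\in B^{\vee}$ iff there exists $r\in\{1,\dots,s\}$ with $|\supp(\nu)\cap\{1,\dots,i_{r}\}|\geq i_{r}-r+1$.

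Next, I would show this same condition is equivalent to $\nu\in C$. Since the generators of $C$ are squarefree monomials preceding some $u_{r}$, and $u_{r}$ has degree $i_{r}-r+1$ with $j$-th variable $x_{r+j-1}$, a squarefree divisor $\nu'$ of $\nu$ precedes $u_{r}$ in the Borel order iff $\nu'$ has variables $q_{1}<\cdots<q_{i_{r}-r+1}$ with $q_{j}\leq r+j-1$ for all $j$. The forward implication ($\nu\in C$ implies the combinatorial condition) is immediate: all such $q_{j}$ lie in $\supp(\nu)\cap\{1,\dots,i_{r}\}$. For the reverse, given $r$ with at least $i_{r}-r+1$ elements of $\supp(\nu)$ in $\{1,\dots,i_{r}\}$, take the smallest $i_{r}-r+1$ of them, $q_{1}<\cdots<q_{i_{r}-r+1}$; a short counting argument (the tail $q_{j},q_{j+1},\dots,q_{i_{r}-r+1}$ consists of $i_{r}-r-j+2$ distinct positive integers at most $i_{r}$, forcing $q_{j}\leq r+j-1$) exhibits the required divisor of $\nu$ preceding $u_{r}$.

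The main potential obstacle is the matching-style argument in the first step, but it reduces to a nested-greedy argument and needs no serious Hall machinery, so I expect the entire proof to amount to combinatorial bookkeeping once the correct equivalent condition on $\supp(\nu)$ is isolated.
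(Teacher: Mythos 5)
Your proof is correct and follows essentially the same route as the paper's: both rest on the description of the Alexander dual as the intersection of the support primes of the generators together with an elementary count of how many elements of a support lie below each threshold $i_{r}$. The only difference is one of direction --- the paper verifies $B=\bigcap_{\mu\in C}P_{\mu}$ and concludes by double duality, whereas you characterize membership in $B^{\vee}$ directly via the greedy selection of an increasing system of representatives from the sets $A_{k}=\{1,\dots,i_{k}\}\setminus\supp(\nu)$ (note that this greedy step is justified because $A_{k}=A_{s}\cap\{1,\dots,i_{k}\}$, not merely because the $A_{k}$ are nested) and then match that condition against membership in $C$.
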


\begin{proof}
Note that $m \in B$ if and only if for all $1 \le j \le s$, $m$ is divisible by at least $j$ variables with index at most $i_j$. Let $C=\sfborel(x_{1}x_{2} \cdots x_{i_{1}}, x_{2}x_{3}\cdots x_{i_{2}},\dots, x_{s}x_{s+1}\cdots x_{i_{s}})$. 

Suppose that $m \in B$ and $\mu$ is a monomial in $C$. We will show that $m \in P_{\mu}$.  
Observe that for some $j$, $\mu$ precedes the monomial $x_{j}\cdots x_{i_{j}}$ in the Borel order.  If on the contrary $m\notin P_{\mu}$, then $m$ would be divisible by 
none of the variables of $P_{\mu}$, so the $j$ variables which divide $m$ and have indices less than or equal to $i_j$ must all be among the variables in the set 
$\{x_{1},\dots, x_{i_{j}}\}\smallsetminus \supp(\mu)$, which has cardinality at most $j-1$.

Conversely, suppose that a monomial $m$ is contained in $P_{\mu}$ for all squarefree $\mu\in C$.  We will show that $m\in B$.  If $m\notin B$, then there is some $j$ for which $m$ is divisible by at most $j-1$ variables with indices less than or equal to $i_{j}$.  Denote these variables by $x_{t_{1}},\dots, x_{t_{r}}$ with $r\leq j-1$.  Put $\nu=\frac{x_{1}\cdots x_{i_{j}}}{x_{t_{1}}\cdots x_{t_{r}}}$.  Then we have $\nu\in C$, but $m\notin P_{\nu}$, a contradiction.
\end{proof}

\begin{remark}  Observe that if $x_{i_{j+1}}=x_{i_{j}+1}$, then $x_{j}\cdots x_{i_{j}}\in \sfborel(x_{j+1}\cdots x_{i_{j+1}})$, so the former generator is redundant.  Thus, in applying Theorem~\ref{t:sqfreeass}, we need only write down the monomials $x_{j}\cdots x_{i_{j}}$ for indices $j$ such that $i_{j}\lneqq i_{j+1}-1$.
\end{remark}

\begin{example}
Let $B=\sfborel(ade,bcf) \subset S=k[a,\dots,f]$. Let $B_1=\sfborel(ade)$ and $B_2=\sfborel(bcf)$. Using Theorem~\ref{t:sqfreeass}, we can compute the Alexander duals of the principal squarefree Borel ideals: $B_1^{\vee} = \sfborel(a,cde)$, and $B_2^{\vee}=\sfborel(bc,cdef)$. Then $B^{\vee} = B_1^{\vee} \cap B_2^{\vee}$. We can now compute $B^{\vee}$ by repeated applications of (the squarefree version of) Proposition~\ref{p:cap}, obtaining
\[
B^{\vee} = \sfborel(ac,adef,bce,cdef)=\sfborel(ac,bce,cdef).
\]
\end{example}

\section{Stanley decompositions} \label{s:stanley}
In this section, we describe a Stanley decomposition for $S/B$, where $B$ is any Borel ideal. Recall that, for a multigraded $S$-module $M$, a \emph{Stanley decomposition} of $M$ is a direct sum decomposition
\[
M=\bigoplus_{f\in \mathcal{I}} f\cdot k[Z_{f}],
\]
where $\mathcal{I}$ is some subset of the multigraded elements of $M$ (in the case of a quotient of $S$ by a monomial ideal, these are monomials) and $Z_{f}$ is a subset of the variables.  See, for example, \cite{notices} for background and connections to the notion of Stanley depth, the subject of much work in combinatorial commutative algebra.  Herzog, Vladoiu, and Zheng investigate Stanley decompositions of Borel ideals in considerably greater generality in \cite{HVZ} than we do here. The primary focus of \cite{HVZ} is computing the Stanley depth of arbitrary monomial modules, i.e., identifying decompositions in which none of the $Z_{f}$ are small. We take a more constructive approach and are interested only in creating an explicit description of one Stanley decomposition of $S/B$, which we will use to study Hilbert functions.  As a bonus, our Stanley decomposition turns out to realize the Stanley depth.

Given a Borel ideal $B$, observe that the truncations of $B$ form a filtration of $S$,
\[
(1)=\trunc_{0}(B)\supseteq \trunc_{1}(B)\supseteq\trunc_{2}(B)\supseteq\dots\supseteq \trunc_{d-1}(B)\supseteq\trunc_{d}(B)=B,
\]
where $d$ is the maximal degree of a generator of $B$.  Taking quotients yields a filtration of $S/B$,
\[
0=S/(\trunc_{0}(B))\subseteq S/(\trunc_{1}(B))\subseteq\dots \subseteq S/(\trunc_{d}(B))=S/B.
\]

Every monomial of $S/B$ occurs in a smallest term in this filtration; we will use this fact to build a Stanley decomposition.

\begin{theorem}\label{t:sdecompone}
Let $B$ be a Borel ideal, generated in degrees less than or equal to $d$.  Then $S/B$ has the Stanley decomposition
\[
S/B=\bigoplus_{s=0}^{d-1}\left(\bigoplus_{\stackrel{m\in \gens(\trunc_{s}(B))\smallsetminus B}{\deg(m)=s}} \hspace{-.25in}m\cdot k[x_{j}:mx_{j}\notin\trunc_{s+1}(B)]\right).
\]
\end{theorem}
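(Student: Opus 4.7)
The plan is to proceed via the filtration $S = \trunc_0(B) \supseteq \trunc_1(B) \supseteq \cdots \supseteq \trunc_d(B) = B$, which by Lemma~\ref{l:equivtrunc} is a descending chain of Borel ideals. Taking quotients yields $S/B = \bigoplus_{s=0}^{d-1} \trunc_s(B)/\trunc_{s+1}(B)$ as a $k$-vector space, where each monomial $\mu$ of $S/B$ lies in the unique factor indexed by the largest $s$ with $\mu \in \trunc_s(B)$. It then suffices to produce a bijection between monomials of $\trunc_s(B) \setminus \trunc_{s+1}(B)$ and pairs $(m,\nu)$ where $m \in \gens(\trunc_s(B)) \setminus B$ has degree $s$ and $\nu$ is a monomial in $k[Z_m]$, with $Z_m = \{x_j : mx_j \notin \trunc_{s+1}(B)\}$.

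First I would record two preliminaries. Every Borel generator of $B$ is a multiple of its $(s+1)$-truncation, so $B \subseteq \trunc_{s+1}(B)$; moreover, every monomial of $\trunc_{s+1}(B)$ of degree $\le s$ lies in $B$, because it must be divisible by a minimal generator of $\trunc_{s+1}(B)$ arising from the Borel orbit of some $m_i \in \Bgens(B)$ with $\deg m_i \le s$, and such an orbit is contained in $B$. Second, combining $\trunc_s(B) = \Borel(\trunc_s(m_i))$ with Lemma~\ref{l:borelvs}, any $\mu \in \trunc_s(B)$ of degree $\ge s$ satisfies $\trunc_s(\mu) \in \trunc_s(B)$, since the precedence $\mu \bgeq \trunc_s(m_i)$ implies $\trunc_s(\mu) \bgeq \trunc_s(m_i)$.

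The technical crux is the following lemma: if $m \in \gens(\trunc_s(B)) \setminus B$ has $\deg m = s$, then every variable in $Z_m$ has index strictly greater than $\max(m)$; equivalently, $mx_j \in \trunc_{s+1}(B)$ for all $j \le \max(m)$. To prove it, I would observe that $m$ must be the image of $\trunc_s(m_i)$ under a sequence of Borel moves for some $m_i \in \Bgens(B)$ with $\deg m_i > s$ (otherwise $m \in \borel(m_i) \subseteq B$). Since these Borel moves only act on variables in the first $s$ factorization positions, applying them to $m_i$ itself produces a monomial $mq \in B$ whose tail $q$ consists of the $(s+1)$-th through last factors of $m_i$, and so $\min(q) \ge \max(m)$. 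For any $j \le \max(m)$, either $j = \min(q)$, or a further Borel move swaps a factor of $q$ down to $x_j$; in either case the resulting element of $B$ has $(s+1)$-truncation equal to $mx_j$, so $mx_j \in \trunc_{s+1}(B)$.

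With the lemma in hand, I would verify the bijection by sending $\mu \in \trunc_s(B) \setminus \trunc_{s+1}(B)$ to $(\trunc_s(\mu), \mu/\trunc_s(\mu))$. The preliminaries force $\deg m = s$ and $m \in \gens(\trunc_s(B)) \setminus B$, and $\nu$ must be supported on $Z_m$ (else $mx_j \in \trunc_{s+1}(B)$ would push $\mu$ there as well). Conversely, given $(m,\nu)$, we have $m\nu \in \trunc_s(B)$ trivially; the lemma puts $\supp(\nu)$ above $\max(m)$, so sorting the factorization of $m\nu$ gives $\trunc_{s+1}(m\nu) = m \cdot \min(\nu)$ when $\nu \ne 1$. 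If $m\nu$ were in $\trunc_{s+1}(B)$, then $m \cdot \min(\nu)$ would be too, contradicting $\min(\nu) \in Z_m$; the case $\nu = 1$ reduces to $m \notin \trunc_{s+1}(B)$, which holds because $m \notin B$ has degree $\le s$. Uniqueness of $(m,\nu)$ is automatic since $m = \trunc_s(\mu)$ is forced. The technical lemma is the main obstacle; once it is established, the rest is routine bookkeeping on how the support restriction on $\nu$ interacts with sorting the factorization of $m\nu$.
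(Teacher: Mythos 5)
Your proposal is correct and follows essentially the same route as the paper: both arguments hinge on the same key lemma that every variable of $Z_{m}$ has index exceeding $\max(m)$, proved by lifting $m$ to an element of $B$ whose $s$-truncation is $m$ and applying one further Borel move to land on $mx_{j}$ in $\trunc_{s+1}(B)$. The only difference is organizational — you package the verification as a single layer-by-layer bijection along the truncation filtration, while the paper separately checks that the summands avoid $B$, are pairwise disjoint, and cover the monomials of $S/B$.
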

\begin{proof}
First, we will show that each of the summands above has trivial intersection with $B$.  Let $m$ be given with $\deg(m)=s$, and write $Z$ for the set of variables $\{x_{j}:mx_{j}\notin\trunc_{s+1}(B)\}$.  Suppose that $\mu\in B\cap \left(m\cdot k[Z]\right)$ is a monomial.  Since $m\notin B$, it follows that $\deg(\mu)\gneqq \deg(m)$.  Hence, $\trunc_{s}(\mu)\in \trunc_{s}(B)$ and $\trunc_{s}(\mu)$ has degree $s$.  We claim that $\trunc_{s}(\mu)=m$.  It suffices to show that every variable of $Z$ has greater index than $x_{\max(m)}$.  Suppose to the contrary that $w\in Z$ has earlier index.  Then, in particular, $mw\notin \trunc_{s+1}(B)$.  On the other hand, there exists some monomial $\tilde{m}\in B$ with $\trunc_{s}(\tilde{m})=m$.  Thus, $\tilde{m}=mm'$ for some monomial $m'$ with $\min(m')\geq \max(m)$. Let $\tilde{w}=\tilde{m}(\frac{w}{x_{\min(m')}})$.  Then $\tilde{w}\in B$, and $\trunc_{s+1}(\tilde{w})=mw\in \trunc_{s+1}(B)$, a contradiction.

Next, we will show that the summands have pairwise trivial intersection.  Let $m\cdot k[Z]$ and $m'\cdot k[Z']$ be two such summands, and suppose that $\mu$ is a monomial in their intersection.  Let $s$ and $s'$ be the degrees of $m$ and $m'$, respectively.  Observe that $\trunc_{s}(\mu)=m$ and $\trunc_{s'}(\mu)=m'$.  Since $m\neq m'$, it follows that $s\neq s'$.  Without loss of generality, we may assume $s<s'$.  Thus, $m$ divides $m'$, so, in particular, $\trunc_{s+1}(m')=\trunc_{s+1}(\mu)$. Let $y=\frac{\trunc_{s+1}(m')}{m}$.  On the one hand, since $my=\trunc_{s+1}(\mu)$, we have $y\in Z$.  On the other hand, since $my=\trunc_{s+1}(m')\in\trunc_{s+1}(B)$, we have $y\notin Z$.

Finally, we will show that every monomial of $S/B$ occurs in one of the summands above.  Let $\mu$ be such a monomial.  Clearly, $\mu\in \trunc_{0}(B)=(1)$.  Thus, there is some maximal $i$ such that $\mu\in \trunc_{i}(B)$.  Let $m=\trunc_{i}(\mu)$.  Then $m$ has degree $i$ and is a generator of $\trunc_{i}(B)$, and $\mu\in m\cdot k[x_{j}:mx_{j}\notin \trunc_{i+1}(B)]$.  
\end{proof}

Recall that the \emph{Stanley depth} of a module is the maximum, over all Stanley decompositions, of the smallest size of any $Z_{f}$ appearing in the decomposition.

\begin{theorem}\label{t:sdepth}
The Stanley decomposition in Theorem~\ref{t:sdecompone} realizes the Stanley depth of $S/B$.
\end{theorem}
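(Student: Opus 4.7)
The plan is to show that the minimum $|Z_{f}|$ over the summands of the decomposition in Theorem~\ref{t:sdecompone} equals $n-\pd(S/B)$ (which is $\operatorname{depth}(S/B)$ by Auslander--Buchsbaum), and then to produce a matching upper bound $\operatorname{sdepth}(S/B)\leq n-\pd(S/B)$ by exhibiting an associated prime of height $\pd(S/B)$. Both parts pivot on one element $m=g^{*}/x_{q^{*}}$, where $g^{*}\in\gens(B)$ is chosen so that $\max(g^{*})=q^{*}:=\max(\max(g):g\in\gens(B))=\pd(S/B)$ (Proposition~\ref{p:codimpdim}).

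For the lower bound, I would first observe that in each summand $m\cdot k[Z]$ with $m\in\gens(\trunc_{s}(B))\smallsetminus B$, the set $\{j:mx_{j}\in\trunc_{s+1}(B)\}$ is downward closed in $\{1,\dots,n\}$ (apply a Borel move $x_{j}\to x_{j'}$ to $mx_{j}$), so $Z=\{x_{q(m)+1},\dots,x_{n}\}$ and $|Z|=n-q(m)$ where $q(m)=\max\{j:mx_{j}\in\trunc_{s+1}(B)\}$. The claim reduces to $\max_{m}q(m)=q^{*}$. The inequality $q(m)\leq q^{*}$ splits in two via Lemma~\ref{l:equivtrunc}: $mx_{j}\bgeq\trunc_{s+1}(m')$ for some $m'\in\Bgens(B)$, and either $\deg m'\geq s+1$, in which case the position of $x_{j}$ among the first $s+1$ slots is bounded by $\max(m')\leq q^{*}$; or $\deg m'\leq s$, whence $mx_{j}\in\Borel(m')\subseteq B$, and because $m\notin B$ any generator of $B$ dividing $mx_{j}$ must contain $x_{j}$, again giving $j\leq q^{*}$. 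For the reverse inequality, set $s=\deg(g^{*})-1$ and $m=g^{*}/x_{q^{*}}$; a minimality check (any $g'\in\gens(B)$ dividing $m$ would divide $g^{*}$, forcing $g'=g^{*}$ and contradicting degrees) shows $m\in\gens(\trunc_{s}(B))\smallsetminus B$, and $mx_{q^{*}}=g^{*}\in B\subseteq\trunc_{s+1}(B)$ gives $q(m)\geq q^{*}$.

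For the matching upper bound I would verify that the same $m=g^{*}/x_{q^{*}}$ is a $q^{*}$-socle: Borel moves $x_{q^{*}}\to x_{i}$ on $g^{*}$ produce $x_{i}m\in B$ for $i\leq q^{*}$, while for any $u\in k[x_{q^{*}+1},\dots,x_{n}]$ a generator of $B$ dividing $um$ would have max at most $q^{*}$, involve no variable of $u$, and so already divide $m$---placing $m\in B$ and contradicting the above. Hence $(x_{1},\dots,x_{q^{*}})\in\Ass(S/B)$. In any Stanley decomposition $S/B=\bigoplus_{f}f\cdot k[Z_{f}]$ the residue of $m$ lies in a unique summand $f_{0}\cdot k[Z_{0}]$, say $m=f_{0}g_{0}$; the equations $x_{i}m=0$ in $S/B$ for $i\leq q^{*}$ force $x_{i}\notin Z_{0}$ (otherwise $f_{0}\cdot x_{i}g_{0}$ would be a nonzero basis element of the free $k[Z_{0}]$-module $f_{0}\cdot k[Z_{0}]$), so $|Z_{0}|\leq n-q^{*}$ and $\operatorname{sdepth}(S/B)\leq n-q^{*}$. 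Combined with the first half, this yields $\operatorname{sdepth}(S/B)=n-q^{*}=\min_{f}|Z_{f}|$ in our decomposition.

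The main obstacle I anticipate is the two-case argument for $q(m)\leq q^{*}$: the regime $\deg m'\leq s$ does not constrain the position of $x_{j}$ via Borel precedence alone and must be handled by the separate divisibility argument relying on $m\notin B$, so some care is needed to see that both cases force the same bound $j\leq q^{*}$.
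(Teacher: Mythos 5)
Your proposal is correct and follows essentially the same route as the paper: identify the minimum $|Z_f|$ in the given decomposition as $n-q$ where $q=\pd(S/B)$ is the largest index of a variable dividing a generator, and then bound any other Stanley decomposition by locating the monomial $g^*/x_q$ in one of its summands and using that $x_1,\dots,x_q$ all annihilate it modulo $B$. You supply more detail than the paper in two places (the paper dismisses the "terminal segment, hence $|Z_f|\geq n-q$" step with a parenthetical, and it only uses $x_i\mu\in B$ for $i\leq q$ rather than the full $q$-socle property you establish), but the underlying argument is the same.
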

\begin{proof} Let $q$ be maximal such that $x_{q}$ divides some Borel generator of $B$.  Observe that the smallest $Z_{f}$ appearing in the decomposition of Theorem~\ref{t:sdecompone} is $Z_{\mu}=\{x_{q+1},\dots,x_{n}\}$, realized when $\mu=\frac{m}{x_{q}}$ for any $m\in\Bgens(B)$ divisible by $x_{q}$.  (This is minimal because every $Z_{f}$ in this decomposition is a terminal sequence of variables.)  Fix one such $\mu$.

Now let $\mathcal{D}$ be another Stanley decomposition of $S/B$.  Then $\mu$ appears in some summand $m^{\ast}k[Z_{m^{\ast}}]$ in $\mathcal{D}$.  Thus, for all $x_{i}\in Z_{m^{\ast}}$, $\mu x_{i}\notin B$.  In particular, $Z_{m^{\ast}}\subseteq Z_{\mu}$.  Thus, $|Z_{\mu}|$ maximizes the minimum dimension of a summand over all Stanley decompositions.
\end{proof}

Stanley \cite{StConj} conjectured that the Stanley depth of a multigraded module is greater than or equal to its depth.  This conjecture is proved for a large class of monomial modules, which includes Borel ideals and quotients by Borel ideals, by Herzog, Vladoiu, and Zheng \cite{HVZ}.  The decomposition above yields a much more direct proof for quotients by Borel ideals, and furthermore shows that equality holds in this case.

\begin{corollary}Stanley's conjecture is sharp for quotients by Borel ideals.
\end{corollary}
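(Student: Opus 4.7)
The plan is to show that for a Borel ideal $B$, the Stanley depth of $S/B$ computed in Theorem~\ref{t:sdepth} coincides with the ordinary depth of $S/B$, so that Stanley's inequality becomes an equality.

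First I would identify the Stanley depth explicitly. Let $q=\max(\max(m):m\in\Bgens(B))$. Theorem~\ref{t:sdepth} tells us that the Stanley decomposition from Theorem~\ref{t:sdecompone} realizes the Stanley depth, and from the proof of Theorem~\ref{t:sdepth} the minimal $|Z_f|$ occurring is $|Z_\mu|=n-q$. Hence the Stanley depth of $S/B$ equals $n-q$.

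Next I would compute the ordinary depth. By Proposition~\ref{p:codimpdim}, the projective dimension of $S/B$ equals $\max(\max(m):m\in\Bgens(B))=q$. Since $S$ is a polynomial ring over a field (in particular Cohen--Macaulay of depth $n$), the Auslander--Buchsbaum formula gives
\[
\depth(S/B)=\depth(S)-\pd(S/B)=n-q.
\]

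Combining these two computations, the Stanley depth of $S/B$ equals $n-q$, which equals $\depth(S/B)$, so the inequality in Stanley's conjecture is an equality for $S/B$. I do not expect any serious obstacle here, since both ingredients (the explicit value of the Stanley depth from Theorem~\ref{t:sdepth}, and the projective dimension formula from Proposition~\ref{p:codimpdim}) have already been established; the only subtlety is invoking Auslander--Buchsbaum, which applies without issue because $B$ is a finitely generated graded $S$-module and $S$ is a polynomial ring.
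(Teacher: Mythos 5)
Your proof is correct and follows exactly the same route as the paper's: identify the Stanley depth as $n-q$ from Theorem~\ref{t:sdepth} with $q=\max(\max(m):m\in\Bgens(B))$, get $\pd(S/B)=q$ from Proposition~\ref{p:codimpdim}, and conclude $\operatorname{depth}(S/B)=n-q$ via Auslander--Buchsbaum. No gaps.
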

\begin{proof} Let $q$ be as in the proof of Theorem~\ref{t:sdepth}.  Then the Stanley depth of $S/B$ is $n-q$, and, by Proposition~\ref{p:codimpdim}, $q$ is the projective dimension of $S/B$. Applying the Auslander-Buchsbaum formula, the depth of $S/B$ is $n-q$.
\end{proof}

We can rewrite the summation of Theorem~\ref{t:sdecompone} in terms of certain quotient modules, without any use of truncation, as follows:
\begin{theorem}\label{t:sdecomptwo}
Let $B$ be Borel.  Then $S/B$ has the Stanley decomposition
\[
S/B=\bigoplus_{j} \left(\bigoplus_{m\in \mathcal{I}_{j}}m\cdot k[x_{j+1},\dots, x_{n}]
\right),
\]
where $\mathcal{I}_{j}$ is the standard monomials of $\displaystyle\frac{B+(x_{j}-1)}{B+(x_{j+1}-1,\dots, x_{n}-1)}$.  The index set $\mathcal{I}_{j}$ is empty unless $\codim B\leq j\leq \pd S/B$.
\end{theorem}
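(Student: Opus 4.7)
The plan is to obtain this decomposition by regrouping the summands of Theorem~\ref{t:sdecompone} according to the terminal interval of variables that appears. The key observation is that each variable set $Z_{m}=\{x_{k}:mx_{k}\notin\trunc_{s+1}(B)\}$ appearing there has the form $\{x_{j+1},\dots,x_{n}\}$ for a unique $j=j(m)\in\{0,1,\dots,n\}$: since $\trunc_{s+1}(B)$ is Borel (Lemma~\ref{l:equivtrunc}), if $mx_{l}\in\trunc_{s+1}(B)$ and $k<l$, then the Borel move $x_{l}\mapsto x_{k}$ yields $mx_{k}\in\trunc_{s+1}(B)$, so the complement of $Z_{m}$ is downward-closed. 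Grouping the summands by this $j$ defines $\mathcal{I}_{j}$ and immediately yields the outer direct sum.

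Next, I would interpret the stated quotient concretely. Write $\phi_{j}:S\to k[x_{1},\dots,x_{j}]$ for the homomorphism sending $x_{i}\mapsto x_{i}$ for $i\leq j$ and $x_{i}\mapsto 1$ for $i>j$, and let $\bar{B}_{j}=\phi_{j}(B)$, which inherits the Borel property. Then $S/(B+(x_{j+1}-1,\dots,x_{n}-1))\cong k[x_{1},\dots,x_{j}]/\bar{B}_{j}$, and the quotient in the theorem is the kernel of the further surjection onto $k[x_{1},\dots,x_{j-1}]/\bar{B}_{j-1}$, namely the image of the ideal $(x_{j}-1)$. A short calculation shows that the monomials in this kernel are exactly those $m\in k[x_{1},\dots,x_{j}]$ satisfying $m\notin\bar{B}_{j}$ and $\phi_{j-1}(m)\in\bar{B}_{j-1}$; equivalently, no $\mu\in k[x_{j+1},\dots,x_{n}]$ has $m\mu\in B$, while some $\mu\in k[x_{j},x_{j+1},\dots,x_{n}]$ does.

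To match this with $\mathcal{I}_{j}$, I would argue both inclusions. Given $m\in\gens(\trunc_{s}(B))\setminus B$ with $Z_{m}=\{x_{j+1},\dots,x_{n}\}$: first, $\max(m)\leq j$, because if $q=\max(m)$, comparing factorizations of $mx_{q}$ with $\trunc_{s+1}(g)$ for some $g\in\Bgens(B)$ satisfying $m\bgeq\trunc_{s}(g)$ shows $mx_{q}\bgeq\trunc_{s+1}(g)$, placing $x_{q}$ outside $Z_{m}$. Second, applying to $g$ the Borel moves that realize $\trunc_{s+1}(g)\mapsto mx_{j}$ yields an element of $B$ of the form $(mx_{j})g''$ with $g''\in k[x_{j+1},\dots,x_{n}]$, so $m\cdot k[x_{j},\dots,x_{n}]\cap B\neq\emptyset$. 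Third, a hypothetical $m\mu\in B$ with $\mu\in k[x_{j+1},\dots,x_{n}]$ would truncate to show $mx_{k}\in\trunc_{s+1}(B)$ for some $k>j$, contradicting $x_{k}\in Z_{m}$. Conversely, a monomial $m$ satisfying the two quotient conditions must be a minimal generator of $\trunc_{s}(B)$ (a proper divisor in $\trunc_{s}(B)$ would itself lie in $B$ and force $m\in B$), and the same Borel-precedence arguments recover $Z_{m}=\{x_{j+1},\dots,x_{n}\}$.

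Finally, the range $\codim B\leq j\leq\pd S/B$ follows from Proposition~\ref{p:codimpdim}. If $j<\codim B$, some Borel generator $g$ has $\min(g)>j$, so $\phi_{j}(g)=1$, forcing $\bar{B}_{j}=k[x_{1},\dots,x_{j}]$ and $\mathcal{I}_{j}=\emptyset$. If $j>\pd S/B$, every Borel generator lies in $k[x_{1},\dots,x_{j-1}]$, so divisibility by a generator is unaffected by $\phi_{j-1}$; this gives $m\in\bar{B}_{j}\Leftrightarrow\phi_{j-1}(m)\in\bar{B}_{j-1}$ and makes the two required conditions contradictory. The main obstacle will be the careful bookkeeping in the identification step, particularly translating between the truncation-based condition $mx_{k}\in\trunc_{s+1}(B)$ and the existence of a witness for $m$ in $B$ of the right variable-support form.
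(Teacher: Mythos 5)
Your proof is correct and follows essentially the same route as the paper: both regroup the summands of Theorem~\ref{t:sdecompone} according to the terminal variable interval $Z_{m}=\{x_{j+1},\dots,x_{n}\}$ and identify the resulting index sets with the standard monomials of the subquotients. The only real difference is presentational --- the paper passes through the saturation filtration $(B:x_{1}^{\infty})\supseteq\cdots\supseteq(B:x_{n}^{\infty})\supseteq B$ before dehomogenizing, whereas you work with the dehomogenization maps $\phi_{j}$ directly and supply the verifications the paper compresses into an ``observe that'' (note only the harmless slip that your $g''$ lies in $k[x_{j},\dots,x_{n}]$ rather than $k[x_{j+1},\dots,x_{n}]$).
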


\begin{proof}
We have a filtration
\[
(1)=(B:x_{1}^{\infty})\supseteq (B:x_{2}^{\infty})\supseteq\dots\supseteq (B:x_{n}^{\infty})\supseteq B.
\]
Observe that $m\cdot k[x_{j+1},\dots,x_{n}]$ is a summand in the Stanley decomposition of Theorem~\ref{t:sdecompone} if and only if $m\in (B:x_{j}^{\infty})\smallsetminus (B:x_{j+1}^{\infty})$ and $\max(m)\leq j$, if and only if $m$ is a nonzero monomial of $\frac{(B:x_{j}^{\infty})}{(B:x_{j+1}^{\infty})}\cong\frac{B+(x_{j}-1,\dots, x_{n}-1)}{B+(x_{j+1}-1,\dots, x_{n}-1)}$ and $\max(m)\leq j$.  Every standard monomial modulo $(B+(x_{j+1}-1,\dots, x_{n}-1))$ has max at most $j$, so the summands of the form $m\cdot k[x_{j+1},\dots,x_{n}]$ are in one-to-one correspondence with the standard monomials of the Artinian module $\frac{B+(x_{j}-1,\dots, x_{n}-1)}{B+(x_{j+1}-1,\dots, x_{n}-1)}$.
\end{proof}

Since each summand of the form $m\cdot k[Z_{m}]$ contributes a summand of $\frac{t^{\deg(m)}}{(1-t)^{|Z_{m}|}}$ to the Hilbert series, Theorem~\ref{t:sdecomptwo} yields the following formula for the Hilbert series of $S/B$:
\[
\text{HS}(S/B)=\sum_{j}\frac{\text{HS}\left(\frac{B+(x_{j}-1)}{B+(x_{j+1}-1,\dots, x_{n}-1)}\right)}{(1-t)^{n-j}}
\]
This formula lacks aesthetic appeal, but it allows us to compute the multiplicity of $S/B$.
\begin{proposition}
Write $B=\borel(m_{1},\dots, m_{r})$, 
and let $p=\codim(B)$.  Then the multiplicity of $S/B$ is
\[
e(S/B)=\dim_{k}\left(\frac{S}{(B,x_{p+1}-1, \dots, x_{n}-1)}\right).
\]
\end{proposition}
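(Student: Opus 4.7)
The plan is to extract the multiplicity directly from the Hilbert series formula displayed immediately before the proposition, by isolating the term of highest pole order at $t=1$. Since $\codim B = p$, Proposition~\ref{p:codimpdim} gives $\dim(S/B) = n-p$, so by the Hilbert--Serre presentation of the Hilbert series one has the standard identity
\[
e(S/B) \;=\; \lim_{t\to 1}(1-t)^{n-p}\,\HS(S/B).
\]
Write the numerator modules from Theorem~\ref{t:sdecomptwo} as $\mathcal{N}_{j} := \bigl(B+(x_{j}-1,\dots,x_{n}-1)\bigr)\big/\bigl(B+(x_{j+1}-1,\dots,x_{n}-1)\bigr)$. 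Each $\mathcal{N}_{j}$ is Artinian, so $\HS(\mathcal{N}_{j})$ is a polynomial and $\HS(\mathcal{N}_{j})\big|_{t=1}=\dim_{k}\mathcal{N}_{j}$. The Hilbert series formula then reads $\HS(S/B)=\sum_{j}\HS(\mathcal{N}_{j})/(1-t)^{n-j}$.

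The key step is showing that $\mathcal{N}_{j}=0$ whenever $j<p$, so that the $j=p$ term alone realizes the pole of order $n-p$. I would argue as follows: by Theorem~\ref{t:bayerstillman} (or Proposition~\ref{p:codimpdim}) we have $\sqrt{B}=(x_{1},\dots,x_{p})$, so for every $i\le p$ there exists $N$ with $x_{i}^{N}\in B$. Factoring $x_{i}^{N}-1=(x_{i}-1)(x_{i}^{N-1}+\cdots+1)$ yields
\[
1 \;=\; x_{i}^{N}-(x_{i}^{N}-1) \;\in\; B+(x_{i}-1),
\]
so $B+(x_{i}-1,\dots,x_{n}-1)=S$ for every $i\le p$. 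Applying this to both the numerator ($i=j$) and denominator ($i=j+1$) of $\mathcal{N}_{j}$ shows $\mathcal{N}_{j}=0$ for $j<p$. For $j=p$ the numerator collapses (since $x_{p}\in\sqrt{B}$) while the denominator does not, giving
\[
\mathcal{N}_{p} \;=\; S\big/\bigl(B+(x_{p+1}-1,\dots,x_{n}-1)\bigr).
\]

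Finally, multiplying the Hilbert series formula by $(1-t)^{n-p}$ and using the vanishing of the lower-order terms,
\[
(1-t)^{n-p}\HS(S/B) \;=\; \HS(\mathcal{N}_{p})\;+\;\sum_{j>p}(1-t)^{j-p}\HS(\mathcal{N}_{j}).
\]
Evaluating at $t=1$, every summand in the right-hand sum vanishes (since $j-p\ge 1$ and each $\HS(\mathcal{N}_{j})$ is a polynomial), while $\HS(\mathcal{N}_{p})\big|_{t=1}=\dim_{k}\mathcal{N}_{p}$. The left-hand side equals $e(S/B)$, completing the proof. The only delicate step is the vanishing of $\mathcal{N}_{j}$ for $j<p$, which is where the codimension assumption enters essentially; the rest is standard bookkeeping with pole orders. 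Everything else is a routine consequence of the Stanley decomposition already established.
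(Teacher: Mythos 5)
Your argument is essentially the paper's own proof: both extract the multiplicity from the Hilbert series formula following Theorem~\ref{t:sdecomptwo} by observing that the summands with $j<p$ vanish (since $B+(x_j-1)=(1)$ for $j\le p$, which you justify correctly via $x_j^N\in B$), so the $j=p$ term alone carries the pole of order $n-p$ and evaluation at $t=1$ gives $\dim_k\bigl(S/(B,x_{p+1}-1,\dots,x_n-1)\bigr)$. Correct, and in fact slightly more detailed than the paper's version.
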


\begin{proof}
The first nonzero summand is $\displaystyle\frac{\text{HS}\left(\frac{S}{B+(x_{p+1}-1,\dots,x_{n}-1)}\right)}{(1-t)^{n-p}}$ because  $B+(x_{j}-1)=(1)$ when $j\leq p$. All other summands have lower powers of $(1-t)$ in the denominator.  Evaluating the $h$-polynomial at $t=1$ thus gives us $\dim_{k}(S/(B,x_{p+1}-1, \dots, x_{n}-1))$.
 \end{proof}

In the case that $B=\borel(m)$ is a principal Borel ideal, Theorem~\ref{t:sdecompone} yields a more elegant expression for the Hilbert series of $S/B$.
Factor $m=x_{i_1}x_{i_2} x_{i_3} \cdots x_{i_s}$.
Then, if $u$ is a monomial generator of $\trunc_{d}(B)$, we compute $(\trunc_{d+1}(B):u)=(x_{1},\dots, x_{i_{d+1}})$.  Thus, in particular, $Z_{u}=\{x_{1+i_{d+1}},\dots, x_{n}\}$. This proves the following:

\begin{proposition}\label{c:princstanley}
Let $B=\borel(m)$ be a principal Borel ideal, and factor $m=x_{i_{1}}\dots x_{i_{s}}$.  Then
\[
\HS(S/B) = \sum_{d=0}^{s-1} \frac{c_{d}t^d}{(1-t)^{n-i_{d+1}}},
\]
where $c_{d}$ is the number of degree $d$ generators of $\trunc_{d}(B)$.  The multiplicity of $S/B$ is $e(S/B)=\displaystyle\sum_{i_{d+1}=i_{1}}c_{d}$.
\end{proposition}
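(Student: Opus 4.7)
The plan is to specialize the Stanley decomposition of Theorem~\ref{t:sdecompone} to the principal Borel ideal $B=\Borel(m)$ with factorization $m=x_{i_1}\cdots x_{i_s}$. By Lemma~\ref{l:equivtrunc}, $\trunc_d(B)=\Borel(x_{i_1}\cdots x_{i_d})$ for $0\le d\le s$, so all minimal generators of $\trunc_d(B)$ have degree exactly $d$ (and are not in $B$ when $d<s$). The summands in Theorem~\ref{t:sdecompone} are therefore indexed by the minimal generators $u$ of $\trunc_d(B)$ for $0\le d\le s-1$; there are $c_d$ such $u$ at level $d$, and each contributes $u\cdot k[Z_u]$ with $Z_u=\{x_j:ux_j\notin\trunc_{d+1}(B)\}$.

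The heart of the argument is to show $Z_u=\{x_{i_{d+1}+1},\ldots,x_n\}$ for every such $u$, equivalently $(\trunc_{d+1}(B):u)=(x_1,\ldots,x_{i_{d+1}})$. By Lemma~\ref{l:borelvs}, membership in $\Borel(x_{i_1}\cdots x_{i_{d+1}})$ amounts to having, for each $k\le d+1$, at least $k$ variables of index $\le i_k$. Since $u$ is a generator of $\trunc_d(B)$, $u$ has at least $d$ variables of index $\le i_d\le i_{d+1}$; multiplying by $x_j$ with $j\le i_{d+1}$ raises this count to $d+1$ and places $ux_j$ into $\trunc_{d+1}(B)$. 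Conversely, if $j>i_{d+1}$, then $ux_j$ has degree $d+1$ but at most $d$ variables of index $\le i_{d+1}$, so it fails the membership criterion. Thus $|Z_u|=n-i_{d+1}$, and each of the $c_d$ summands at level $d$ contributes $t^d/(1-t)^{n-i_{d+1}}$ to $\HS(S/B)$, yielding the claimed formula.

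For the multiplicity statement, Proposition~\ref{p:codimpdim} gives $\codim B=i_1$, so $n-i_{d+1}$ attains its maximum value $n-i_1$ precisely when $i_{d+1}=i_1$. Placing the Hilbert series over the common denominator $(1-t)^{n-i_1}$ produces the $h$-polynomial
\[
h(t)=\sum_{d:\, i_{d+1}=i_1} c_d\, t^d \;+\; \sum_{d:\, i_{d+1}>i_1} c_d\, t^d(1-t)^{i_{d+1}-i_1},
\]
and $e(S/B)=h(1)$ annihilates the second sum, leaving $e(S/B)=\sum_{i_{d+1}=i_1} c_d$ as claimed. The only non-routine step is the colon/Borel-order computation establishing $|Z_u|=n-i_{d+1}$; once that is in hand, the Hilbert series formula and its $t=1$ specialization are essentially bookkeeping on top of Theorem~\ref{t:sdecompone}.
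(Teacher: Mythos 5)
Your proposal is correct and follows the paper's own route: both specialize the Stanley decomposition of Theorem~\ref{t:sdecompone} to $B=\Borel(m)$, reduce everything to the computation $(\trunc_{d+1}(B):u)=(x_1,\dots,x_{i_{d+1}})$ so that $|Z_u|=n-i_{d+1}$, and then read off the multiplicity by evaluating the $h$-polynomial at $t=1$. You supply the Borel-order justification of the colon computation, which the paper asserts without detail, but the argument is the same.
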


In the next section, we will compute the $c_{d}$ using generalized Catalan triangles.

\section{Catalan diagrams}\label{s:catalan}

We begin by recalling some useful notation in the study of Hilbert functions and Betti numbers of Borel ideals.

\begin{definition}
Let $B$ be a Borel (or squarefree Borel) ideal.  Write $B=(m_{1},\dots, m_{s})$ in terms of its minimal monomial generators.  For each $i$, $1\leq i\leq n$, and each degree $d$ let $w_{i}^{d}(B)$ be the cardinality of the set $\{m\in B:\max(m)=i \text{ and } \deg(m)=d\}$ and put 
\[
w_{\leq i}^{d}(B)=\displaystyle\sum_{j\leq i}w_{j}^{d}(B).
\]  
If $B$ is a (principal) Borel ideal generated in degree $d$, we simply write $w_{i}(B)$ instead of $w_{i}^{d}(B)$.
\end{definition}

The numbers $w_{i}^{d}(B)$ were introduced by Bigatti in \cite{Bigatti} and have been used to study Hilbert functions and Betti numbers of Borel ideals, particularly in comparison to lex ideals.  See \cite{Bigatti, Fr, Green, Mermin-Peeva}.

In the formula in Proposition~\ref{c:princstanley}, the number $c_{d}$ is $w_{\leq i_{d}}(\trunc_{d}(B))$.


The remainder of this section deals with the computation of $w_{i}(B)$ for a principal Borel ideal $B$.



\begin{definition}
For a monomial $m$ with factorization $x_{i_1}x_{i_2} \cdots x_{i_r}$, define the \emph{Catalan diagram with shape $m$}, written $C(m)$, as follows.  First construct a left-justified array of boxes whose $j^{th}$ row (from the top) has $i_j$ boxes.  Then put a $1$ in each box along the top row.  Finally, fill in the remaining boxes by the following rule: the entry in the $k^{th}$ box of the $j^{th}$ row is the sum of the entries in the first $k$ boxes of the $(j-1)^{st}$ row. (If the $(j-1)^{st}$ row has fewer than $k$ boxes, simply fill in the box with the sum of all the entries in the $(j-1)^{st}$ row.) Note that we number rows from top to bottom and columns from left to right, starting with 1 in both cases.
\end{definition}

\begin{figure}[htp]
\centering
\includegraphics[scale=.60]{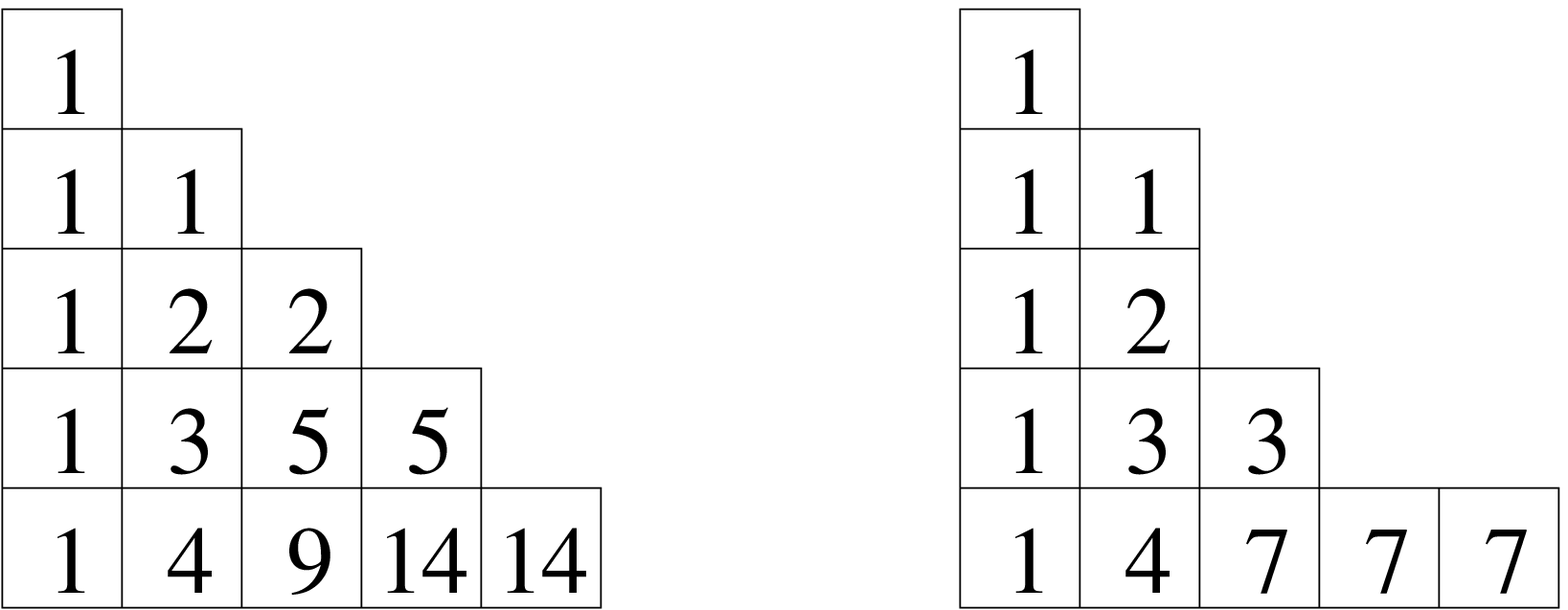}
\caption{The Catalan diagrams $C(x_1x_2x_3x_4x_5)$ and $C(x_1x_2^2x_3x_5)$, respectively.}\label{catalandiagrams}
\end{figure}

\begin{example}\label{e:triangle}
If $m=x_{1}x_{2}\cdots x_{d}$, then the Catalan diagram with shape $m$ is the first $d$ rows of Catalan's triangle (see Figure \ref{catalandiagrams}).  The entries on its main diagonal are the Catalan numbers.  These are central objects in enumerative combinatorics; for background see \cite{ECII} or \cite{Koshy}.
\end{example}


If $B=\Borel(m)$, we can read off the numbers $w_{i}(B)$ from $C(m)$, the
Catalan diagram with shape $m$.

\begin{theorem}
The entry in the $j^\text{th}$ box of the bottom row of $C(m)$ is $w_j(\borel(m))$.
\end{theorem}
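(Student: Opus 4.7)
The plan is to identify the entries of $C(m)$ combinatorially and to match them with $w_j(\Borel(m))$. Write $m = x_{i_{1}} x_{i_{2}} \cdots x_{i_{r}}$ in factored form. By Lemma~\ref{l:borelvs}, a monomial $\nu$ lies in $\Borel(m)$ if and only if $\nu \bgeq m$, and any such $\nu$ of degree exceeding $r$ is divisible by its $r$-truncation, which itself precedes $m$ in the Borel order. Hence the minimal monomial generators of $\Borel(m)$ are exactly the monomials $\mu = x_{k_{1}} \cdots x_{k_{r}}$ in factored form (so $k_1 \leq \cdots \leq k_r$) with $k_t \leq i_t$ for each $t = 1, \ldots, r$. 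In particular, $w_j(\Borel(m))$ equals the number of such sequences having $k_r = j$.

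Denote by $a_{j,k}$ the entry in row $j$, column $k$ of $C(m)$. My main claim is that $a_{j,k}$ counts the weakly increasing sequences $(k_1, \ldots, k_j)$ with $k_t \leq i_t$ for every $t \leq j$ and $k_j = k$. I would prove this by induction on $j$. The base case $j = 1$ is immediate: there is exactly one such sequence, namely $(k)$, and it exists precisely when $k \leq i_1$, which is the range of columns in the top row, where every entry is $1$. For the inductive step, a valid sequence of length $j$ ending at $k$ arises by appending $k$ to a valid sequence of length $j-1$ ending at some value $l$ satisfying both $l \leq k$ (monotonicity) and $l \leq i_{j-1}$ (admissibility at step $j-1$). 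Summing the inductive count $a_{j-1,l}$ over $1 \leq l \leq \min(k, i_{j-1})$ exactly reproduces the rule defining $a_{j,k}$, completing the induction.

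Specializing to $j = r$ gives $a_{r,j} = w_j(\Borel(m))$, which is the theorem. I do not anticipate any real obstacle: the argument amounts to unwinding the recursive definition of $C(m)$. The one subtle point is the boundary case in which the column index $k$ exceeds the length $i_{j-1}$ of the preceding row, where the filling rule instructs us to use the sum of all entries of row $j-1$ rather than just the first $k$. This mirrors the combinatorial constraint that $k_{j-1}$ is bounded by $i_{j-1}$ and not by $k$, so the two conventions coincide. Keeping this synchronization straight is the only thing that needs care during the induction.
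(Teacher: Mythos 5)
Your proof is correct and is essentially the paper's argument: both proceed by induction on the row (equivalently, the degree of the truncation), identifying the degree-$j$ generators of $\trunc_j(\Borel(m))$ with weakly increasing sequences $(k_1,\dots,k_j)$ satisfying $k_t\le i_t$, and matching the diagram's filling rule with the operation of appending or deleting the last variable. The paper phrases this via the bijection $\mu\mapsto\mu/x_k$ between generators of $\trunc_j(B)$ with $\max=x_k$ and generators of $\trunc_{j-1}(B)$ with $\max\le x_k$, which is exactly your ``drop the last entry'' map.
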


\begin{proof}
Let $d=\deg(m)$. 
We induct on $d$.  If $d = 1$, then $m = x_{i_1}$, so $B = (x_1, x_2, \ldots, x_{i_1})$ and $w_j(B) = 1$ for $j \leq i_1$.  Now suppose $d > 1$, $k \leq i_d$, and consider the map from $\{\mu \in \gens(B) : \max(\mu) = x_k\}$ to $\{\mu' \in \gens(\trunc_{d-1}(B)) : \max(\mu') \in \{x_1, x_2, \ldots, x_k\}\}$ defined by $\mu \mapsto \frac{\mu}{x_k}$.  This map is clearly injective.  By induction, the cardinality of the latter set is the sum of the entries in the first $k$ boxes of the $(d-2)^\text{th}$ row of $C(m)$.  To see that this map is surjective, let $\mu'$ be a monomial of $\trunc_{d-1}(B) = \Borel(x_{i_1} x_{i_2} \cdots x_{i_{d-1}})$ with degree $d-1$.  Then $\mu' \bgeq x_{i_1}x_{i_2} \cdots x_{i_{d-1}}$, so $\mu' x_k \bgeq x_{i_1}x_{i_2} \cdots x_{i_d} = m$ because $k \leq i_d$.  So $\mu'x_k \in B$, and the claim follows.
\end{proof}

Implicit in this construction is a bijection between generators of a principal Borel ideal and  planar lattice paths within the diagram $C(m)$.  This is a special case of a \emph{lattice path polymatroid}; see \cite{Schweig}.  

\begin{example}\label{e:artinian}
Consider $B=\mathbf{m}^{d}=\borel(x_{n}^{d})$.  Then the Catalan diagram with shape $m$ is a rectangle with height $d$ and width $n$. This is Pascal's triangle, with the $i^{\text{th}}$ row of Pascal's triangle along the $i^{\text{th}}$ antidiagonal. The entry in the $k^{th}$ box of the $j^{th}$ row is ${j +k -2 \choose j-1}$.  In particular, $w_{i}(B)=\binom{d+i-2}{d-1}$.
\end{example}

\begin{figure}[htp]
\centering
\includegraphics[scale=.60]{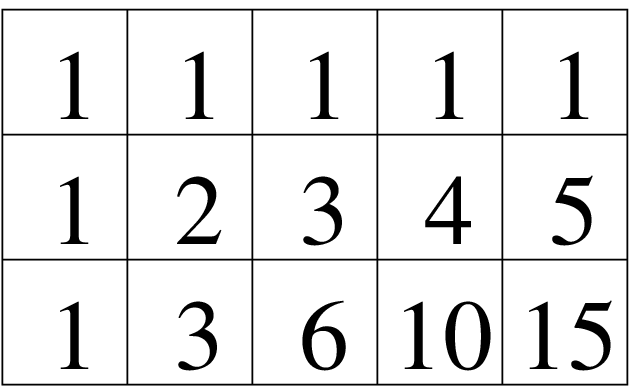}
\caption{The Catalan diagram $C(x_5^3)$.}\label{rectangle}
\end{figure}

\begin{example} \label{e:hs}
Let $B =  \borel(x_{1}x_{2}x_{3}x_{4}x_{5})$.  We compute the Hilbert series of the quotient $S/B$.  The Catalan diagram $C(x_{1}x_{2}x_{3}x_{4}x_{5})$ is computed in Figure \ref{catalandiagrams}.  For $d>0$, the number of generators of $\trunc_{d}(B)$ is the sum of the entries in the $d^{\text{th}}$ row of this diagram.  (Recall that $\trunc_{0}(B)=1$ has a single generator.)  Thus we compute $c_{0}=1, c_{1}=1, c_{2}=2, c_{3}=5$, and $c_{4}=14$ (the first five Catalan numbers).  By Corollary~\ref{c:princstanley}, the Hilbert series of $S/B$ is 
\[
\frac{1}{(1-t)^{4}} + \frac{t}{(1-t)^{3}} + \frac{2t^{2}}{(1-t)^{2}} + \frac{5t^{3}}{1-t} + 14t^{4}.
\]
$S/B$ has multiplicity one (the coefficient on $\frac{1}{(1-t)^{4}}$) and $h$-polynomial $1+t+t^{2}+t^{3}+t^{4}-41t^{5}+79t^{6}-56t^{7}+14t^{8}$.  Since the resolution is linear, we can read off the Betti numbers from $(1-t)h(t)$.
\end{example}

\begin{example}
Suppose that $B=\Borel(m)$ is a lexicographic ideal.  Then, by \cite[Prop. 3.2(i)]{Bo}, $m$ has the form $x_{1}^{a}x_{j}x_{n}^{b}$.  A straightforward lattice-path argument on 
$C(m)$ shows that $w_{i}(B)=\displaystyle\sum_{\ell=1}^{j}\textstyle \binom{b-1+i-\ell}{i-\ell}$.  Some manipulation yields 
$|B_{\deg(m)}|=\displaystyle\sum_{\ell=1}^{j}\textstyle \binom{b+n-\ell}{n-\ell}$.  In particular, the $(n-1)^{\text{st}}$ Macaulay representation of $|B|$, which governs the growth of the Hilbert function of $B$, has the property that the numerators decrease by exactly one between consecutive summands. 
\end{example}

Next, we relate Catalan diagrams to squarefree Borel ideals.

Let $m = x_{i_1}x_{i_2} \cdots x_{i_d}$ be a squarefree monomial, with $i_1 < i_2 < \cdots < i_d$.  Define a (not necessarily squarefree) monomial $\tau(m)$ by 
\[
\tau(m) = x_{i_1}x_{i_2 -1} x_{i_3 -2 } \cdots x_{i_d - d +1}.
\]
Note that $\tau$ is the inverse to the map $\sigma$ defined in \cite{AHHshiftingops}.

\begin{proposition}[\cite{AHHshiftingops}]
The map $\tau$ is a bijection between the squarefree monomials and the monomials of $k[x_{1},x_{2},\dots,x_{\infty}]$.
\end{proposition}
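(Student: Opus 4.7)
The plan is to prove $\tau$ is a bijection by exhibiting an explicit two-sided inverse. Because the definition of $\tau$ is just a coordinate shift on the exponent-vector written in ``sorted'' form, everything will be mechanical once the book-keeping is set up.

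First I would verify that $\tau$ is well-defined, i.e., that if $m=x_{i_1}x_{i_2}\cdots x_{i_d}$ is squarefree with $i_1<i_2<\cdots<i_d$, then $\tau(m)=x_{i_1}x_{i_2-1}\cdots x_{i_d-d+1}$ really is a monomial in $k[x_1,x_2,\dots]$. The strict inequalities $i_{j+1}>i_j$ among integers give $i_{j+1}\geq i_j+1$, hence $(i_{j+1}-j)\geq (i_j-(j-1))$, so the shifted indices are weakly increasing; and since $i_1\geq 1$, every shifted index is a positive integer. So $\tau(m)$ lies in the target set.

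Next I would define the candidate inverse $\sigma$: given an arbitrary monomial $n$, write its unique factorization (in the sense of Notation~\ref{n:factorization}) as $n=x_{j_1}x_{j_2}\cdots x_{j_d}$ with $j_1\leq j_2\leq\cdots\leq j_d$, and set
\[
\sigma(n)=x_{j_1}x_{j_2+1}x_{j_3+2}\cdots x_{j_d+d-1}.
\]
The shifted indices $j_k+(k-1)$ are strictly increasing because $j_{k+1}\geq j_k$ forces $j_{k+1}+k\geq j_k+k > j_k+(k-1)$, so $\sigma(n)$ is a squarefree monomial, and $\sigma$ is well-defined.

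Finally, I would check $\sigma\circ\tau=\mathrm{id}$ on squarefree monomials and $\tau\circ\sigma=\mathrm{id}$ on arbitrary monomials. For the first, if $m=x_{i_1}\cdots x_{i_d}$ with $i_1<\cdots<i_d$, then the weakly increasing factorization of $\tau(m)$ is exactly $x_{i_1}x_{i_2-1}\cdots x_{i_d-d+1}$ (its indices are already in the correct order by the inequality observed above), and applying $\sigma$ adds back $0,1,\dots,d-1$ to the successive indices, recovering $m$. The other composition is symmetric. The main (and essentially only) obstacle is the indexing care: one must be certain that both maps respect the sortedness of the factorization so that the inverse operations are applied coordinate-by-coordinate to corresponding variables. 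Since degrees are preserved throughout, $\tau$ restricts to a bijection in each degree as well.
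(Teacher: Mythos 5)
Your proof is correct and complete. The paper itself gives no argument for this proposition---it simply cites \cite{AHHshiftingops} and remarks that $\tau$ is the inverse of the map $\sigma$ defined there---and your explicit construction of $\sigma$, together with the checks that both maps preserve the sorted factorization and that the two compositions are the identity, is exactly that standard argument written out in full.
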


\begin{corollary}
If $m$ is a squarefree monomial, $\tau$ is a bijection between $\gens ( \sfBorel(m))$ and $\gens(\Borel(\tau(m)))$.
\end{corollary}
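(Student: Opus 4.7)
The plan is to check that $\tau$ preserves degree and is an order-isomorphism on monomials of a fixed degree under the Borel order, then invoke the characterization of generators of a principal Borel ideal as the minimum-degree monomials preceding the Borel generator.

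First I would verify degree-preservation and work out the factorization of $\tau(\mu)$. For squarefree $\mu = x_{i_1}\cdots x_{i_d}$ with $i_1 < \cdots < i_d$, the expression $\tau(\mu) = x_{i_1} x_{i_2-1} \cdots x_{i_d-d+1}$ has $d$ factors. The hypothesis $i_{k+1} > i_k$ rearranges to $i_{k+1}-k \geq i_k - k + 1$, so these factors are already weakly increasing and therefore constitute the factorization of $\tau(\mu)$ in the sense of Notation~\ref{n:factorization}.

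Next I would establish the key order-preservation property. If $\mu = x_{i_1}\cdots x_{i_d}$ and $\mu' = x_{j_1}\cdots x_{j_d}$ are both squarefree of degree $d$, then by Definition~\ref{d:precedes}, $\mu \bgeq \mu'$ iff $i_k \leq j_k$ for all $k$; subtracting $k-1$ from both sides this is equivalent to $i_k - k + 1 \leq j_k - k + 1$ for all $k$, which by the preceding paragraph is precisely $\tau(\mu) \bgeq \tau(\mu')$. This is the heart of the argument.

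Finally I would assemble the bijection. By Lemma~\ref{l:borelvs}, every monomial of $\Borel(\tau(m))$ precedes $\tau(m)$ and hence has degree $\geq d = \deg \tau(m)$; any such monomial of degree $> d$ is divisible by its own $d$-truncation, which also precedes $\tau(m)$ and has degree $d$, and so cannot be a minimal generator. Thus $\gens(\Borel(\tau(m)))$ is exactly the set of degree-$d$ monomials $\bgeq \tau(m)$, and the analogous statement holds for $\gens(\sfBorel(m))$. Since $\tau$ is a bijection between squarefree monomials and all monomials (the cited proposition), preserves degree, and preserves the Borel order on degree-$d$ monomials, it restricts to the asserted bijection. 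The only substantive step is the one-line order-preservation calculation; the rest is routine bookkeeping and I do not anticipate any real obstacle.
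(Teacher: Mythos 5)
Your proof is correct and is exactly the argument the paper leaves implicit (the corollary is stated without proof as an immediate consequence of the bijection proposition): $\tau$ preserves degree and is an order-isomorphism for the Borel order in each fixed degree, and the minimal generators of a principal (squarefree) Borel ideal are precisely the (squarefree) monomials of the generator's degree that precede it. Nothing further is needed.
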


\begin{corollary}\label{tau}
Let $m$ be a squarefree monomial of degree $d$, and put $I = \sfBorel(m)$ and $J = \Borel(\tau(m))$.  Then $w_{i}(I) = w_{i-d+1}(J)$ for all $i$. 
\end{corollary}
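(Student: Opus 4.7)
The plan is to combine the preceding bijection $\tau\colon \gens(\sfBorel(m)) \to \gens(\Borel(\tau(m)))$ with a direct computation of how $\tau$ transforms the statistic $\max$. Since $w_i$ counts top-degree generators with maximum variable equal to $x_i$, it suffices to exhibit a bijection between $\{\mu \in \gens(I) : \max(\mu) = i\}$ and $\{\nu \in \gens(J) : \max(\nu) = i - d + 1\}$.

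First, I would note that every generator in either set has degree $d$, so the map $\tau$ preserves degree trivially. Next, I would compute $\max(\tau(\mu))$ explicitly: if $\mu = x_{j_1} x_{j_2} \cdots x_{j_d}$ is a squarefree monomial with $j_1 < j_2 < \cdots < j_d$, then by definition
\[
\tau(\mu) = x_{j_1} x_{j_2 - 1} x_{j_3 - 2} \cdots x_{j_d - d + 1}.
\]
Because the sequence $j_k - (k-1)$ is weakly increasing in $k$ (as $\mu$ is squarefree, $j_{k+1} \geq j_k + 1$), the maximum index appearing in $\tau(\mu)$ is $j_d - d + 1 = \max(\mu) - d + 1$.

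Combining this identity with the bijection $\tau\colon \gens(I) \to \gens(J)$ from the preceding corollary, we see that $\tau$ restricts to a bijection between the set of generators of $I$ with $\max = i$ and the set of generators of $J$ with $\max = i - d + 1$, which is exactly the desired equality $w_i(I) = w_{i-d+1}(J)$.

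There is no real obstacle here; the only subtlety is verifying that $\max(\tau(\mu)) = \max(\mu) - d + 1$, which reduces to the observation that the sequence $j_k - (k-1)$ is monotonically nondecreasing when $\mu$ is squarefree, so that the largest index in the image is attained at $k = d$.
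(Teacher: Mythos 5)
Your proof is correct and follows exactly the paper's argument: the paper's entire proof is the one-line observation that $\max(\mu) = x_i$ if and only if $\max(\tau(\mu)) = x_{i-d+1}$, combined with the bijection $\tau\colon \gens(I)\to\gens(J)$ from the preceding corollary. You simply spell out the (easy) verification that the maximum index of $\tau(\mu)$ is attained in the last position, which the paper leaves implicit.
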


\begin{proof}
For any monomial $\mu$ of degree $d$, $\max(\mu) = x_i$ if and only if $\max(\tau(\mu)) = x_{i-d+1}$.  
\end{proof}

Now let $B = \sfBorel(m)$ for some squarefree monomial $m$.  By Corollary \ref{tau}, we can compute the numbers $w_{i}(B)$: $w_{i}(B)$ is the $(i-d+1)^{\text{st}}$ entry in the bottom row of $C(\tau(m))$.

\section{Betti numbers}\label{s:Betti}

It is well known that the Betti numbers of a Borel ideal depend only on the $w_i$.  (See, for example, \cite{Bigatti}.)  We recall the Eliahou-Kervaire resolution \cite{EK}, which makes the necessary computation explicit.

\begin{construction}
Let $B$ be a Borel ideal, and let $m$ be a minimal monomial generator of $B$ with $\max(m)=x_j$.  Let $\alpha$ be any squarefree monomial in $k[x_{1},\dots, x_{j-1}]$.  The formal symbol $(m,\alpha)$ is called an \emph{Eliahou-Kervaire symbol} or \emph{EK symbol} for $B$.  We assign the EK symbol $(m,\alpha)$ homological degree equal to the degree of $\alpha$ and multidegree equal to $m\alpha$.
\end{construction}

\begin{theorem}[\cite{EK}]
The Eliahou-Kervaire symbols form a basis for a minimal free resolution of the ideal $B$.
\end{theorem}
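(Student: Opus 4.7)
The plan is to reduce the theorem to the classical \emph{Eliahou-Kervaire decomposition} of monomials in $B$, which is the combinatorial engine behind both the differential and the exactness. Concretely, I will first establish that every monomial $\mu \in B$ admits a unique factorization $\mu = m \cdot \nu$ with $m \in \gens(B)$ and $\min(\nu) \geq \max(m)$; call this the \emph{EK-normal form} of $\mu$, and write $b(\mu) := m$ and $e(\mu) := \nu$. Existence is obtained by selecting, among all minimal generators of $B$ that divide $\mu$, the one whose $\max$ is as small as possible (using the Borel property to guarantee that such an $m$ satisfies $\min(\mu/m) \geq \max(m)$); uniqueness follows because two distinct candidates would violate the Borel-minimality of $\gens(B)$.

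Next I would define the differential on the free $S$-module with basis the EK symbols $(m,\alpha)$. Writing $\alpha = x_{k_1}\cdots x_{k_s}$ in factored form, set
\[
\partial(m,\alpha) = \sum_{t=1}^{s} (-1)^{t}\Bigl[ x_{k_t}\cdot (m,\alpha/x_{k_t}) \;-\; \tfrac{m x_{k_t}}{b(m x_{k_t})}\cdot \bigl(b(m x_{k_t}),\,\alpha/x_{k_t}\bigr)\Bigr],
\]
where the second term is well-defined precisely because $b(m x_{k_t})$ has $\max$ strictly greater than $\max(\alpha/x_{k_t})$ (this is where the normal-form lemma enters). In homological degree $0$, $(m,1)\mapsto m$ recovers the generators of $B$.

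The three main verifications are then: (i) $\partial^2 = 0$, (ii) exactness in positive homological degrees, and (iii) minimality. For (i) one expands $\partial^2(m,\alpha)$ into four groups of terms indexed by pairs $k_t \neq k_u$; the "pure" $x_{k_t}x_{k_u}$ terms cancel by antisymmetry, while the mixed terms cancel in pairs precisely because of the uniqueness of the EK-normal form applied to $m x_{k_t} x_{k_u}$. For (ii), my preferred approach is to filter by the lex order on the EK-normal form of a multidegree: for each monomial $\mu \in B$, the strand of the complex in multidegree $\mu$ is isomorphic to the reduced chain complex of a simplex on the variable set $\supp(e(\mu)/\max(b(\mu)))\cup\{\text{corrections}\}$, which is acyclic; alternatively, one constructs an explicit contracting homotopy sending $(m,\alpha)\mapsto 0$ if $\alpha$ already involves $\max(b(e(\mu)))$ and otherwise "absorbs" one variable of $e(\mu)$ into $\alpha$. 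Finally, (iii) is immediate: every coefficient appearing in $\partial$ is a single variable $x_{k_t}$ or a monomial $\frac{m x_{k_t}}{b(m x_{k_t})}$, which is never a unit since $b(m x_{k_t})$ is a \emph{proper} divisor of $m x_{k_t}$.

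The main obstacle is the $\partial^2 = 0$ calculation: the cancellation is not formal but depends on verifying that, for any $k_t \neq k_u$, one has $b\bigl(b(m x_{k_t})\cdot x_{k_u}\bigr) = b\bigl(b(m x_{k_u})\cdot x_{k_t}\bigr)$, and that the resulting surplus monomial factors agree. Both equalities reduce to the uniqueness of the EK-normal form of $m x_{k_t} x_{k_u}$, so the whole proof ultimately rests on the decomposition lemma established in step one.
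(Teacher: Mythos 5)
First, a point of orientation: the paper gives no proof of this statement --- it is quoted as a theorem of Eliahou and Kervaire \cite{EK} (with \cite{PS} recommended as an exposition), so there is no internal argument to compare yours against. Your sketch does follow the classical route: the unique decomposition $\mu = b(\mu)\cdot e(\mu)$ with $\max(b(\mu)) \le \min(e(\mu))$, the explicit differential, then $\partial^2=0$, exactness, and minimality. The decomposition lemma and the minimality step are fine as you state them.

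There are, however, two places where what you assert is not what actually happens, and they are exactly the places where the real work of \cite{EK} sits. The claim that the second term of $\partial$ is automatically legitimate because $\max(b(mx_{k_t}))$ exceeds $\max(\alpha/x_{k_t})$ is false: take $B=\Borel(x_1x_4)$, $m=x_1x_4$, $\alpha=x_2x_3$, $k_t=2$; then $b(mx_2)=x_1x_2$ has max $x_2$ while $\alpha/x_2=x_3$ has max $x_3$, so $(b(mx_2),\alpha/x_2)$ is not an EK symbol. The standard repair is the convention that $(m,\alpha):=0$ whenever $\max(\alpha)\ge\max(m)$; but once that convention is in force, $\partial^2=0$ no longer reduces to the clean symmetric identity $b\bigl(b(mx_{k_t})x_{k_u}\bigr)=b\bigl(b(mx_{k_u})x_{k_t}\bigr)$ --- you must also show that the terms in which exactly one side degenerates to zero still cancel, and that identity itself requires an argument (it is the statement that the canonical decomposition function of a stable ideal is ``regular,'' not a formal consequence of uniqueness of the normal form of $mx_{k_t}x_{k_u}$). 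Likewise, the exactness step is a placeholder: the multigraded strand is not literally the reduced chain complex of a simplex, and the word ``corrections'' is carrying the entire burden. If you want a complete proof along these lines, the cleanest path is the iterated mapping cone of \cite{CE,PS}: order $\gens(B)=\{m_1,\dots,m_s\}$ so that $\bigl((m_1,\dots,m_{j-1}):m_j\bigr)=(x_1,\dots,x_{\max(m_j)-1})$ for each $j$, and resolve $B_j=(m_1,\dots,m_j)$ as the cone of a Koszul complex on those variables over the resolution of $B_{j-1}$; this delivers the EK basis, $\partial^2=0$, exactness, and minimality simultaneously and avoids the delicate bookkeeping of the degenerate symbols.
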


A nice exposition of the Eliahou-Kervaire resolution in terms of mapping cones is \cite{PS}.  This is one of the only known explicit resolutions, so it has been heavily studied; see, for example, \cite{BW, CE, Cl, GHP, Me}.

\begin{corollary} The graded Betti number $b_{i,j}(B)$ is the number of Eliahou-Kervaire symbols with homological degree $i$ and internal degree $j$.  The graded Betti number $b_{i,j}(S/B)$ is 1 if $i=j=0$ and $b_{i-1,j}(B)$ otherwise.
\end{corollary}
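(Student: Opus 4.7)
The plan is to derive both statements as immediate consequences of the Eliahou-Kervaire theorem cited immediately above, together with the standard relation between the Betti numbers of an ideal and those of its quotient.

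First I would recall that, by definition, the graded Betti number $b_{i,j}(B)$ equals $\dim_k \Tor_i^S(B,k)_j$, and that for a minimal graded free resolution $F_\bullet \to B$, this Tor is computed as $F_i \otimes_S k$ in internal degree $j$. Hence $b_{i,j}(B)$ is precisely the number of basis elements of $F_i$ in internal degree $j$. Applying this to the resolution produced by the Eliahou-Kervaire theorem, whose basis is indexed by EK symbols $(m,\alpha)$ with the specified homological and multidegree conventions, the first assertion follows: the symbols with homological degree $i$ and internal degree $j$ form a $k$-basis of $F_i$ in degree $j$, so their count is $b_{i,j}(B)$.

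Next I would derive the relationship $b_{i,j}(S/B) = b_{i-1,j}(B)$ for $(i,j) \neq (0,0)$ from the short exact sequence of graded $S$-modules
\[
0 \longrightarrow B \longrightarrow S \longrightarrow S/B \longrightarrow 0.
\]
Taking the long exact sequence of $\Tor^S_\bullet(-,k)$ and using that $\Tor_i^S(S,k) = 0$ for $i \geq 1$ while $S \otimes_S k = k$ concentrated in degree $0$, we obtain isomorphisms $\Tor_i^S(S/B,k) \cong \Tor_{i-1}^S(B,k)$ for $i \geq 2$, together with a short exact sequence in low degrees that yields $\Tor_1^S(S/B,k)_j \cong \Tor_0^S(B,k)_j$ for all $j > 0$ (since $B$ has no generators in degree $0$, so the map $B \otimes k \to S \otimes k$ is zero in positive degrees and the degree-$0$ part of $S/B \otimes k$ accounts for the remaining $b_{0,0}(S/B) = 1$). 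Translating through the definition of graded Betti numbers gives exactly $b_{i,j}(S/B) = b_{i-1,j}(B)$ for $(i,j) \neq (0,0)$, with $b_{0,0}(S/B) = 1$.

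Neither step is a serious obstacle: the first is simply reading off basis sizes from the explicit resolution stated in the cited theorem, and the second is the standard homological bookkeeping that accompanies any resolution of an ideal versus its quotient. The only mild care required is to handle the degree-$0$ contribution to $S/B$ separately, which is why the statement singles out the case $i = j = 0$.
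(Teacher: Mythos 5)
Your proof is correct and is exactly the standard argument the paper leaves implicit (the corollary is stated without proof as an immediate consequence of the Eliahou--Kervaire theorem): reading off $b_{i,j}(B)$ as the count of degree-$j$ basis elements of the $i$th free module in a minimal resolution, and transferring to $S/B$ via the long exact sequence of $\Tor(-,k)$ applied to $0\to B\to S\to S/B\to 0$. Your handling of the low-degree terms, using that $B$ has no generators in degree $0$ so the connecting map gives $b_{1,j}(S/B)=b_{0,j}(B)$ for $j>0$ and $b_{0,0}(S/B)=1$, is the right bookkeeping.
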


For a given monomial generator $m$, the number of $\alpha$ such that $(m,\alpha)$ is an EK symbol of homological degree $i$ is $\binom{\max(m)-1}{\alpha}$.  Summing over all $m$ gives us a formula for the Betti numbers in terms of the $w_i$:

\begin{proposition}\label{p:betti} Suppose that $B$ is generated entirely in degree $d$.  Then 
\[
b_i(B)=b_{i,i+d}(B)=\displaystyle\sum_{j=1}^{n}\binom{j-1}{i}w_{j}(B).
\]  
The graded Poincar\'e series for $B$ is 
\[
P_{B}(t,u)=u^{d}\displaystyle\sum_{i=1}^{n}w_{i}(B)(1+tu)^{i-1}.
\]
\end{proposition}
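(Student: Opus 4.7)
The plan is to deduce both formulas directly from the Eliahou--Kervaire resolution, which has just been recalled. Since $B$ is generated in a single degree $d$, every EK symbol $(m,\alpha)$ satisfies $\deg(m)=d$ and has internal degree $d+\deg(\alpha)$; thus its internal degree equals $d$ plus its homological degree, so the resolution is pure and $b_{i,j}(B)=0$ unless $j=i+d$. This immediately gives $b_i(B)=b_{i,i+d}(B)$.

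Next I would count EK symbols of homological degree $i$. For a fixed generator $m$ with $\max(m)=x_j$, the admissible $\alpha$ are exactly the squarefree monomials of degree $i$ in $k[x_1,\dots,x_{j-1}]$, and there are $\binom{j-1}{i}$ of them. Summing over all generators and then grouping generators by the value of $\max(m)$ (so that generators with $\max(m)=x_j$ contribute $w_j(B)$ copies) yields
\[
b_i(B)=\sum_{m\in\gens(B)}\binom{\max(m)-1}{i}=\sum_{j=1}^{n}\binom{j-1}{i}w_j(B),
\]
which is the first formula.

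For the Poincar\'e series I would plug the just-proved formula into $P_B(t,u)=\sum_{i,j}b_{i,j}(B)t^iu^j$. Because only the strand $j=i+d$ contributes,
\[
P_B(t,u)=\sum_{i\geq 0}b_i(B)\,t^i u^{i+d}=u^d\sum_{i\geq 0}\left(\sum_{j=1}^n\binom{j-1}{i}w_j(B)\right)(tu)^i.
\]
Swapping the order of summation and applying the binomial theorem $\sum_i\binom{j-1}{i}(tu)^i=(1+tu)^{j-1}$ gives the claimed expression $P_B(t,u)=u^d\sum_{j=1}^n w_j(B)(1+tu)^{j-1}$.

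There is no serious obstacle here: once one accepts the Eliahou--Kervaire theorem, both parts are essentially bookkeeping. The only subtlety worth noting explicitly is the observation that generation in a single degree forces the Betti diagram to be concentrated in one strand; everything else reduces to partitioning the EK symbols by $\max(m)$ and a single binomial identity.
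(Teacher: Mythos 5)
Your proposal is correct and matches the paper's argument: the paper likewise obtains the formula by counting Eliahou--Kervaire symbols $(m,\alpha)$ with $\binom{\max(m)-1}{i}$ choices of $\alpha$ per generator and grouping generators by $\max(m)$, with the Poincar\'e series following from the binomial theorem. No issues.
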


\begin{example} 
Suppose $B=\Borel(x_{n}^{d})$ is the $d^{\text{th}}$ power of the maximal ideal.  Then $w_{i}(B)=\binom{i+d-2}{d-1}$, and $b_{i,i+d}(B)=\displaystyle\sum_{j=1}^{d}\textstyle\binom{j+d-2}{d-1}\binom{j-1}{i}$.  This has been known since before the Eliahou-Kervaire resolution was discovered; see for example \cite{BR}.
\end{example}

\begin{corollary}
Suppose that $B=\borel(m)$ is a principal Borel ideal.  Let $C(m)$ be the Catalan diagram of shape $m$, and let $g(t)$ be the generating function with coefficients given by the bottom row of $C(m)$.  Then $tg(t)$ is the generating function for $w_{i}(B)$, and the Poincar\'e series for $B$ is $u^{\deg(m)}g(1+tu)$.
\end{corollary}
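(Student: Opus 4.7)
The plan is to observe that both assertions are essentially restatements of results already in hand, and to stitch them together with a one-line substitution.

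First I would establish the generating-function identification for the $w_i(B)$. By the theorem two displays above the corollary, the entry in the $j^{\text{th}}$ box of the bottom row of $C(m)$ equals $w_j(\borel(m))$. Interpreting the coefficients of $g(t)$ as indexed by column number starting at $1$, we write
\[
g(t)=\sum_{j\geq 1} w_j(B)\,t^{j-1},
\]
so that
\[
t\,g(t)=\sum_{i\geq 1} w_i(B)\,t^{i},
\]
which is the ordinary generating function for the sequence $w_i(B)$. The only delicate point here is fixing the indexing convention consistently: the constant term of $g(t)$ must correspond to $w_1(B)$ (the first box of the bottom row), not to a fictitious $w_0$. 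Once that convention is in place, this half of the corollary is immediate.

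Next I would derive the Poincaré formula by substitution. By Proposition~\ref{p:betti}, since a principal Borel ideal $B=\borel(m)$ is generated in the single degree $d=\deg(m)$, we have
\[
P_B(t,u)=u^{d}\sum_{i=1}^{n} w_i(B)(1+tu)^{i-1}.
\]
Substituting $1+tu$ for $t$ in $g$ gives
\[
g(1+tu)=\sum_{i\geq 1} w_i(B)(1+tu)^{i-1},
\]
and multiplying through by $u^{\deg(m)}$ recovers exactly the right-hand side of Proposition~\ref{p:betti}. Hence $P_B(t,u)=u^{\deg(m)}g(1+tu)$.

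There is no serious obstacle; the corollary is essentially a bookkeeping reformulation. The only potential pitfall worth flagging in the write-up is the indexing convention for $g(t)$ and the fact that terms with $i$ exceeding the length of the bottom row of $C(m)$ contribute $w_i(B)=0$, so extending the sum in Proposition~\ref{p:betti} up to $n$ versus truncating at the bottom-row length makes no difference. Once this is noted, the proof is a two-line calculation.
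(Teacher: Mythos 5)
Your proposal is correct and follows exactly the route the paper intends (the paper leaves this corollary without an explicit proof precisely because it is the immediate combination of the Catalan-diagram theorem, which identifies the bottom row with the $w_j(B)$, and Proposition~\ref{p:betti}). Your care with the indexing convention $g(t)=\sum_j w_j(B)t^{j-1}$ is consistent with the paper's later usage $g(t)=\frac{1}{t}f(t)$ in Proposition~\ref{p.kresoverb}, so nothing further is needed.
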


Proposition~\ref{p:betti} enables us to compute the Betti numbers for an arbitrary Borel ideal, one Borel generator at a time:

\begin{lemma}\label{l:oneatatime}
Suppose that $B=\Borel(m_{1},\dots, m_{r})$, with $d=\deg(m_{r})\geq \deg(m_{i})$ for all $i$.  Put $B'=\borel(m_{1},\dots, m_{r-1})$ and $B''=\borel(m_{r})\cap B'$.  Then $B''$ is generated entirely in degree $d$ and $b_{i,j}(B)= b_{i,j}(B')+b_{i,j}(\borel(m_{r}))-b_{i,j}(B'')$.   
\end{lemma}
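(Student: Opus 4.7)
The plan is to prove the two assertions in tandem: first, to establish that $B''$ is generated in degree $d$ by invoking the structure of Borel intersections from Section~\ref{s:prelim}; and second, to derive the Betti identity by refining the Eliahou-Kervaire formula $b_{i,i+d}(I)=\sum_k\binom{k-1}{i}w_k^d(I)$ via an inclusion-exclusion count of minimal generators.

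For the degree claim, I would combine Proposition~\ref{p:cap} with the distributivity of intersection over sums for monomial ideals to write
\[
B''=\Borel(m_r)\cap\sum_{i<r}\Borel(m_i)=\sum_{i<r}\Borel(w_i),
\]
where $w_i=m_r\wedge m_i$ is the Borel meet. Since $\deg m_r\ge\deg m_i$, each $w_i$ has degree exactly $d$, so $B''$ is the Borel ideal generated by a set of degree-$d$ monomials. A short check (if $\mu\succ w$ and $\deg\mu>\deg w$, then $\mu/x_{\max(\mu)}$ still precedes $w$) shows that every minimal monomial generator of such an ideal also has degree $d$.

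For the Betti identity, I would split by internal degree $d'=j-i$. When $d'<d$, the ideal $\Borel(m_r)$ is concentrated in degrees $\geq d$, so it contributes no degree-$d'$ generators, $B''$ contributes none either, and the minimal generators of $B$ and $B'$ in degree $d'$ coincide; the identity is therefore trivial in these degrees. For $d'=d$, I would argue that a degree-$d$ monomial $\mu\in B$ is a minimal generator precisely when no $\mu/x_k$ lies in $B'$ (since $\mu/x_k$ has degree $d-1$ and cannot live in $\Borel(m_r)$). This yields the disjoint decomposition
\[
\{\text{min gens of $B$ in deg }d\}=\{\text{min gens of $B'$ in deg }d\}\;\sqcup\;\bigl(\Borel(m_r)_d\setminus B'_d\bigr),
\]
where $I_d$ denotes the degree-$d$ piece. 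Because $\Borel(m_r)$ and $B''$ are equigenerated in degree $d$, their degree-$d$ pieces coincide with their minimal-generator sets, and $B''_d=\Borel(m_r)_d\cap B'_d$. Refining by $\max(\mu)=k$ and applying inclusion-exclusion then gives $w_k^d(B)=w_k^d(B')+w_k^d(\Borel(m_r))-w_k^d(B'')$; multiplying by $\binom{k-1}{i}$, summing over $k$, and applying the graded Eliahou-Kervaire formula to each of the four ideals finishes the identity in the degree-$d$ row.

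The main subtlety I anticipate is the decomposition displayed above: a minimal generator of $\Borel(m_r)$ may coincide with a \emph{non-minimal} degree-$d$ monomial of $B'$, so the right-hand piece must be formed as the set difference of entire degree-$d$ strata, not of minimal-generator sets. Getting this bookkeeping right is what makes the inclusion-exclusion resolve cleanly in terms of $w_k^d(B'')$ rather than some messier correction term.
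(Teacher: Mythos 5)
Your proof is correct, and it is worth noting that the paper states Lemma~\ref{l:oneatatime} without any proof at all, so your argument supplies something the text omits rather than duplicating it. The route you take is the natural one given the surrounding material: since every ideal in sight is Borel, the Eliahou--Kervaire count $b_{i,j}=\sum_k\binom{k-1}{i}w_k^{j-i}$ reduces the Betti identity to an identity of generator counts, and your use of Proposition~\ref{p:cap} together with distributivity of intersection over sums of monomial ideals correctly shows $B''=\Borel(w_1,\dots,w_{r-1})$ with all $w_i$ of degree $d$, hence equigenerated in degree $d$. All the delicate points check out: for $j-i<d$ the graded pieces $B_e$ and $B'_e$ agree for $e<d$ (by Lemma~\ref{l:borelvs}, no monomial of degree below $d$ can precede $m_r$), so the identity is trivial there; and in degree $d$ your stratification is genuinely a disjoint union because any $\mu\in\Borel(m_r)_d\setminus B'_d$ automatically has no $\mu/x_q$ in $B'$ (else $\mu$ itself would lie in $B'$), so it is forced to be a minimal generator of $B$. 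You were also right to insist on taking the set difference of the full degree-$d$ strata rather than of minimal-generator sets, and the equigeneration of $\Borel(m_r)$ and $B''$ is exactly what lets $|\Borel(m_r)_d\cap B'_d|$ be read off as $\sum_k w_k^d(B'')$. The one presentational remark: your appeal to ``the graded Eliahou--Kervaire formula'' for ideals not generated in a single degree goes slightly beyond the literal statement of Proposition~\ref{p:betti}, but it follows immediately from the corollary counting EK symbols by homological and internal degree, so nothing is missing.
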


\begin{example}
We compute the Betti numbers of $B=\Borel(a,b^2,c^3)$, a special case of \cite[Example 2]{EK}, in which Eliahou and Kervaire point out that Catalan numbers arise in computing total Betti numbers of $\Borel(x_1,x_2^2,\dots,x_n^n)$. By Proposition~\ref{p:cap}, $\Borel(a) \cap \Borel(b^2)=\Borel(ab)$. Applying Lemma~\ref{l:oneatatime}, we have
\[
b_{i,j}(\Borel(a,b^2)) = b_{i,j}(\Borel(a)) + b_{i,j}(\Borel(b^2)) - b_{i,j}(\Borel(ab)).
\]
The only row in the Catalan diagram of shape $a$ is (1), the bottom row of the Catalan diagram of shape $b^2$ is $(1, 2)$, and for $ab$, it is $(1,1)$. Multiplying by the appropriate binomial coefficients, we calculate that $b_{0,1}$, $b_{0,2}$, and $b_{1,3}$ of $\Borel(a,b^2)$ are one, and the other graded Betti numbers are zero.

Now we apply Lemma~\ref{l:oneatatime} again to get the graded Betti numbers of $B$ itself. By Proposition~\ref{p:cap}, $\Borel(a,b^2) \cap \Borel(c^3) = \Borel(ac^2,b^2c)$. Thus 
\[
b_{i,j}(B) = b_{i,j}(\Borel(a,b^2)) + b_{i,j}(\Borel(c^3)) - b_{i,j}(\Borel(ac^2,b^2c)).
\]

Let $B_1=\Borel(a,b^2)$, $B_2=\Borel(c^3)$, and $B_3=\Borel(ac^2,b^2c)$. To get the graded Betti numbers of $B_3$, we compute $\Borel(ac^2) \cap \Borel(b^2c) = \Borel(abc)$ and use Lemma~\ref{l:oneatatime}. The bottom row of the Catalan diagram of shape $ac^2$ is $(1,2,3)$; for $b^2c$, the bottom row is $(1,3,3)$, and for $abc$, the bottom row is $(1,2,2)$. To compute the graded Betti numbers of $\Borel(c^3)$, note that the bottom row of the Catalan diagram of shape $c^3$ is $(1, 3, 6)$. Thus, after multiplying by the binomial coefficients, we have the following graded Betti diagrams, switching to the quotients to use the standard Macaulay 2 output:

\medskip
\begin{center}
\begin{tabular}{ccccccccccccccccccc} 
$S/B_1$: & total: & 1 & 2 & 1 & & $S/B_2$: & total: & 1 & 10 & 15 & 6 & & $S/B_3$: & total: & 1 & 8 & 11 & 4\\
& 0: & 1 & 1 & . & & & 0: & 1 & . & . & . & & & 0: & 1 & . & . & .\\
& 1: & . & 1 & 1 & & & 1: & . & . & . & . & & & 1: & . & . & . & .\\
& & & & & & & 2: & . & 10 & 15 & 6 & & &2: & . & 8 & 11 & 4
\end{tabular}
\end{center}

Adding the first two diagrams and subtracting the third, we obtain the graded Betti diagram of $S/B$:

\medskip

\begin{center}
\begin{tabular}{cccccc} 
$S/B$: & total: & 1 & 4 & 5 & 2 \\
& 0: & 1 & 1 & . & . \\
& 1: & . & 1 & 1 & . \\
& 2: & . & 2 & 4 & 2 
\end{tabular}
\end{center}
\end{example}

\bigskip

In a different direction, our techniques also allow us to consider the resolution of the residue field $k$ over $S/B$, where $B$ is a Borel ideal. We may assume that $B$ is generated by monomials of degree at least two. Then by \cite[Corollary 1.2]{Peeva-dgborel}, $S/B$ is Golod, and hence the graded Poincar\'e series of $k$ over $S/B$ can be expressed as 
\[ 
P^{S/B}_k(t,u) = \frac{(1+tu)^n}{1-t^{2}P_{B}^{S}(t,u)}.
\]

\begin{proposition}\label{p.kresoverb}
Let $B$ be a Borel ideal generated in a single degree $d$, and let $f(t)$ be the generating function on the $w_i(B)$. Then 
\[
P^{S/B}_k(t,u)= \frac{(1+tu)^{n+1}}{1+tu-t^2 u^d f(1+tu)}.
\]
If $B=\borel(m)$ is principal, let $g(t)=\frac{1}{t}f(t)$ be the generating function on the bottom row of $C(m)$.  Then
\[
P^{S/B}_k(t,u) = \frac{(1+tu)^{n}}{1-t^2 u^d g(1+tu)}.
\]

\end{proposition}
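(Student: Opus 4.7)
The plan is to substitute the explicit formula for the Poincar\'e series of $B$ over $S$ given in Proposition~\ref{p:betti} into the Golod-type expression
\[
P_k^{S/B}(t,u) \;=\; \frac{(1+tu)^n}{1-t^{2}P_{B}^{S}(t,u)}
\]
quoted just before the proposition, and then rewrite the result in terms of $f(1+tu)$ (respectively $g(1+tu)$). The key observation is a simple conversion between the two natural generating functions associated to $B$. Specifically, if $f(t)=\sum_{i=1}^{n} w_i(B)\, t^{i}$, then
\[
\sum_{i=1}^{n}w_i(B)(1+tu)^{i-1} \;=\; \frac{f(1+tu)}{1+tu},
\]
so Proposition~\ref{p:betti} gives $P_B^S(t,u) = \dfrac{u^d f(1+tu)}{1+tu}$.

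First I would plug this into the Golod formula and clear denominators:
\[
P_k^{S/B}(t,u) \;=\; \frac{(1+tu)^n}{1 - \dfrac{t^2 u^d f(1+tu)}{1+tu}} \;=\; \frac{(1+tu)^{n+1}}{(1+tu) - t^2 u^d f(1+tu)},
\]
which is exactly the first claimed identity. For the principal case $B=\borel(m)$, the bottom row of the Catalan diagram $C(m)$ encodes precisely the numbers $w_i(B)$ by the theorem of Section~\ref{s:catalan}, so its ordinary generating function is $g(t)=\sum_{i=1}^{n}w_i(B)\, t^{i-1} = f(t)/t$. Consequently $g(1+tu) = f(1+tu)/(1+tu)$, which simplifies $P_B^S(t,u)$ to the cleaner expression $u^d g(1+tu)$. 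Substituting this directly into the Golod formula yields the second identity without further manipulation.

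The only mild subtlety worth flagging is that the Golod formula requires $S/B$ to be Golod, which in turn requires $B$ to be generated in degrees $\ge 2$ (via \cite[Corollary 1.2]{Peeva-dgborel}); this is consistent with the setup of the proposition. No other obstacles arise, since both parts reduce to bookkeeping with the Pascal-type identity $(1+tu)^{i-1} = (1+tu)^{i}/(1+tu)$ and a substitution into the Golod formula.
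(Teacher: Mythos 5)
Your proof is correct and follows essentially the same route as the paper: substitute the expression for $P_B^S(t,u)$ from Proposition~\ref{p:betti} into the Golod formula, rewrite $\sum_i w_i(B)(1+tu)^{i-1}$ as $f(1+tu)/(1+tu)$, and clear denominators, with the principal case following from $g(t)=f(t)/t$. Nothing is missing.
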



\begin{proof}
Because $w_i(B)$ counts the number of elements $m$ of $\gens(B)$ with $\max(m)=i$, we have
\begin{align*}
P^{S/B}_k(t,u) 
               &= \frac{(1+tu)^n}{1-t^2u^d\displaystyle\sum_{i=1}^n w_i(B)(1+tu)^{i-1}}\\
			   &= \frac{(1+tu)^n}{1-t^2u^d\dfrac{f(1+tu)}{1+tu}}\\
               &= \frac{(1+tu)^{n+1}}{1+tu-t^2u^df(1+tu)}.\qedhere
\end{align*}
\end{proof}

\subsection{Betti numbers for squarefree Borel ideals}

Now suppose that $B$ is a squarefree Borel ideal in $S$. Then $B$ is resolved by the squarefree part of the Eliahou-Kervaire resolution \cite{CE,AHH-sqfreelex}; that is, the basis for the resolution is given by EK symbols with squarefree multidegree. Thus the Poincar\'e series of $B$ is given by 
\[
P^{S}_{B}(t,u)=\sum_{m\in\gens(B)}u^{\deg(m)}(1+tu)^{\max(m)-\deg(m)}.
\]

For the rest of the section, assume that $B$ is a squarefree Borel ideal generated in a single degree $d$, and let $f(t)$ be the generating function on the $w_{i}(B)$.  If $B=\sfborel(m)$ is  principal squarefree Borel, let $g(t)$ be the generating function on the last row of the Catalan diagram $C(\tau(m))$. Recall that $f(t)=t^{d}g(t)$.

\begin{proposition}
The graded Poincar\'e series of $B$ over $S$ is 
\[
P^{S}_{B}(t,u)=f(1+tu)\left(\frac{u}{1+tu}\right)^{d}.
\]
If $B$ is principal squarefree Borel, then 
\[
P^{S}_{B}(t,u)=u^{d}g(1+tu).
\]
\end{proposition}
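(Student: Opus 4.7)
The plan is to start from the squarefree Eliahou–Kervaire Poincaré formula
\[
P^{S}_{B}(t,u)=\sum_{m\in\gens(B)}u^{\deg(m)}(1+tu)^{\max(m)-\deg(m)}
\]
already recorded just above the statement, and simply reorganize the sum. Since $B$ is generated in a single degree $d$, every $m \in \gens(B)$ has $\deg(m)=d$, so I can factor $u^{d}/(1+tu)^{d}$ outside the sum. What remains is $\sum_{m\in \gens(B)}(1+tu)^{\max(m)}$, and this is where the numbers $w_{i}(B)$ enter: by definition $w_{i}(B)$ counts the generators of $B$ in degree $d$ with $\max(m)=i$, so grouping the generators by their maximum index rewrites the sum as $\sum_{i}w_{i}(B)(1+tu)^{i}=f(1+tu)$.

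Combining these two observations yields
\[
P^{S}_{B}(t,u)=\frac{u^{d}}{(1+tu)^{d}}\sum_{i=1}^{n}w_{i}(B)(1+tu)^{i}=f(1+tu)\left(\frac{u}{1+tu}\right)^{d},
\]
which is the first formula. For the principal squarefree case $B=\sfborel(m)$, I invoke the relation $f(t)=t^{d}g(t)$ noted just before the statement (which itself comes from Corollary~\ref{tau} together with the fact that the $w_{i}(B)$ vanish for $i<d$ in the squarefree setting, so the bottom row of $C(\tau(m))$ is shifted by exactly $d$ positions relative to the $w_{i}(B)$). Substituting $f(1+tu)=(1+tu)^{d}g(1+tu)$ into the first formula collapses the $(1+tu)^{d}$ factors and gives $P^{S}_{B}(t,u)=u^{d}g(1+tu)$, as desired.

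There is no real obstacle here: the entire argument is a repackaging of identities already in hand. The only point worth stating carefully is the range of the index $i$ in the sum $f(1+tu)$ — one should observe that $w_{i}(B)=0$ whenever $i<d$ (a squarefree monomial of degree $d$ has $\max \geq d$), so that the generating function $f$ has no terms of degree less than $d$ and the factorization $f(t)=t^{d}g(t)$ is well-defined with $g$ a polynomial. Once that is noted, the proof is a one-line computation in each case.
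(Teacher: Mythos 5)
Your proposal is correct and follows essentially the same route as the paper: factor the degree term out of the squarefree Eliahou--Kervaire Poincar\'e sum, group generators by $\max(m)=i$ to produce $\sum_{i\ge d} w_i(B)(1+tu)^{i-d}=f(1+tu)/(1+tu)^d$, and then apply $f(t)=t^d g(t)$ for the principal case. Your remark that $w_i(B)=0$ for $i<d$ is exactly the point the paper encodes by starting its sum at $i=d$.
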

\begin{proof}
We have
\begin{align*}
P^{S}_{B}(t,u)&=u^{d}\sum_{m\in\gens(B)}(1+tu)^{\max(m)-\deg(m)}\\
&=u^{d}\sum_{i=d}^{n}w_{i}(B)(1+tu)^{i-d}\\
&=u^{d}\frac{f(1+tu)}{(1+tu)^{d}}.\qedhere
\end{align*}
\end{proof}

By \cite{AHH-sqfreelex}, the ring $S/B$ is Golod.  Thus an argument analogous to that of Proposition~\ref{p.kresoverb} gives the following formula for the resolution of $k$ over $S/B$:
\begin{proposition}
The Poincar\'e series for $k$ over $S/B$ is
\[
P_{k}^{S/B}(t,u)=\frac{(1+tu)^{n+d}}{(1+tu)^{d}-t^{2}u^{d}f(1+tu)}.
\]
If $B$ is principal squarefree Borel, the graded Poincar\'e series is
\[
P_{k}^{S/B}(t,u)=\frac{(1+tu)^{n}}{1-t^{2}u^{d}g(1+tu)}.
\]
\end{proposition}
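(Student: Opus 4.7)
The plan is to imitate the derivation of Proposition~\ref{p.kresoverb}, substituting the squarefree expression for $P_B^S(t,u)$ that was just established. By \cite{AHH-sqfreelex}, the quotient $S/B$ is Golod when $B$ is squarefree Borel, so the standard Golod identity gives
\[
P_k^{S/B}(t,u)=\frac{(1+tu)^n}{1-t^2 P_B^S(t,u)}.
\]
Into this I substitute $P_B^S(t,u)=f(1+tu)\bigl(\tfrac{u}{1+tu}\bigr)^d$ from the preceding proposition, so that the denominator becomes $1-\dfrac{t^2 u^d f(1+tu)}{(1+tu)^d}$.

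The main step is then to clear the nested fraction: multiplying numerator and denominator by $(1+tu)^d$ converts the formula into
\[
P_k^{S/B}(t,u)=\frac{(1+tu)^{n+d}}{(1+tu)^d-t^2 u^d f(1+tu)},
\]
which is the first claim.

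For the principal squarefree Borel case, I use the relation $f(t)=t^d g(t)$ recorded just before the previous proposition. Substituting gives $f(1+tu)=(1+tu)^d g(1+tu)$, so the denominator factors as $(1+tu)^d\bigl(1-t^2 u^d g(1+tu)\bigr)$. Cancelling the resulting $(1+tu)^d$ against the matching factor in the numerator $(1+tu)^{n+d}$ leaves
\[
P_k^{S/B}(t,u)=\frac{(1+tu)^n}{1-t^2 u^d g(1+tu)},
\]
as desired.

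The entire derivation is algebraic manipulation; the only nontrivial ingredient is the Golod property of $S/B$, already supplied by \cite{AHH-sqfreelex}. I therefore expect no real obstacle—the argument runs parallel to Proposition~\ref{p.kresoverb}, the only point requiring care being the bookkeeping of the $(1+tu)^d$ factors which distinguish the squarefree Poincar\'e series for $B$ from its non-squarefree counterpart.
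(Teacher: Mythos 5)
Your proposal is correct and matches the paper's intent exactly: the paper itself only says the formula follows from the Golod property of $S/B$ (via \cite{AHH-sqfreelex}) by an argument analogous to Proposition~\ref{p.kresoverb}, which is precisely the substitution of $P_B^S(t,u)=f(1+tu)\left(\frac{u}{1+tu}\right)^d$ into the Golod identity and the bookkeeping of $(1+tu)^d$ factors that you carry out. The algebra in both displayed computations checks out.
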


Aramova, Herzog, and Hibi \cite{AHH-gotzmann} compute the minimal free resolution of a squarefree Borel ideal in the exterior algebra $E$.  In \cite[Corollary 3.3]{AHH-gotzmann}, they give the graded Poincar\'e series:
\[
P_{B}^{E}(t,u)=\sum_{m\in \gens(B)}\frac{u^{\deg(m)}}{(1-tu)^{\max(m)}}.
\]

Let $P=(x_{1}^{2},\dots,x_{n}^{2})$.  Then a squarefree Borel ideal is naturally defined over $S/P$ in the same way that it is over $E$.  The same mapping cone argument minimally resolves $B$ over both $E$ and $S/P$, so the Poincar\'e series is the same in both cases.
When $B$ is generated in a single degree, this simplifies using the $w_{i}$:

\begin{proposition}
The graded Poincar\'e series of $B$ over $E$ or $S/P$ is
\[
P_{B}^{E}(t,u)=P_{B}^{S/P}(t,u)=
u^{d}f\left(\frac{1}{1-tu}\right).
\]
If $B$ is principal squarefree Borel, then 
\[
P_{B}^{E}(t,u)=P_{B}^{S/P}(t,u)=\left(\frac{u}{1-tu}\right)^{d}g\left(\frac{1}{1-tu}\right).
\]

\end{proposition}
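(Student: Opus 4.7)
The plan is to start from the Aramova--Herzog--Hibi formula
\[
P_{B}^{E}(t,u)=\sum_{m\in \gens(B)}\frac{u^{\deg(m)}}{(1-tu)^{\max(m)}},
\]
recalled immediately above the statement, and then rewrite it using the statistics $w_{i}(B)$.

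First, I would justify the equality $P_{B}^{E}(t,u)=P_{B}^{S/P}(t,u)$ by invoking the remark in the paragraph preceding the statement: a squarefree Borel ideal $B$ is defined over $S/P$ exactly as over $E$, and the resolution of $B$ in either ambient ring is constructed by the same mapping cone procedure on the squarefree Eliahou--Kervaire symbols. Since both resolutions have the same basis indexed by EK symbols with the same homological and internal bidegrees, the two Poincar\'e series agree term by term. No further calculation is needed for this step.

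Second, because $B$ is generated in the single degree $d$, every $m\in \gens(B)$ satisfies $\deg(m)=d$, so I can factor $u^{d}$ out of the Aramova--Herzog--Hibi sum and regroup by the value of $\max(m)$:
\[
P_{B}^{E}(t,u)=u^{d}\sum_{m\in \gens(B)}\frac{1}{(1-tu)^{\max(m)}}=u^{d}\sum_{i=d}^{n}w_{i}(B)\left(\frac{1}{1-tu}\right)^{i}.
\]
By the definition of $f(t)=\sum_{i=d}^{n}w_{i}(B)t^{i}$ (read off from the preceding proposition, where $f(1+tu)=\sum w_{i}(B)(1+tu)^{i}$), this last expression is exactly $u^{d}f\!\left(\tfrac{1}{1-tu}\right)$, proving the first displayed formula.

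Finally, for the principal squarefree Borel case, I would invoke the relation $f(t)=t^{d}g(t)$ stated just before the preceding proposition and substitute $t\mapsto \tfrac{1}{1-tu}$ to obtain
\[
u^{d}f\!\left(\frac{1}{1-tu}\right)=u^{d}\left(\frac{1}{1-tu}\right)^{d}g\!\left(\frac{1}{1-tu}\right)=\left(\frac{u}{1-tu}\right)^{d}g\!\left(\frac{1}{1-tu}\right),
\]
which is the second displayed formula. The only substantive step is the verification that the mapping cone resolution transfers unchanged from $E$ to $S/P$; everything else is formal rewriting of an already known series.
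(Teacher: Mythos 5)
Your proposal is correct and matches the paper's (omitted) argument exactly: the paper simply asserts that the Aramova--Herzog--Hibi formula ``simplifies using the $w_i$'' and that the identical mapping-cone resolution over $E$ and $S/P$ forces the two Poincar\'e series to agree, which is precisely what you carry out, mirroring the displayed computation for $P^S_B(t,u)$ in the preceding proposition. The regrouping $\sum_{m}(1-tu)^{-\max(m)}=\sum_i w_i(B)(1-tu)^{-i}$ and the substitution $f(t)=t^d g(t)$ are exactly the intended steps, so nothing is missing.
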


\section{Pointed pseudo-triangulations} \label{s:ppt}

In this section, we uncover a surprising connection between the Betti numbers of some principal Borel ideals and the number of pointed pseudo-triangulations of certain point sets in the plane. Fix a monomial $m$ with $\max(m) = x_k$, and consider the principal Borel ideal $B = \Borel(m)$. We begin by determining a simple expression, not requiring a sum, for the Betti numbers of particular principal Borel ideals. Since principal Borel ideals have linear resolutions, we suppress the degree and write total Betti numbers throughout.


\begin{proposition}\label{p:recursion}
For each $i$, we have 
\[
b_{i-1}(B) + b_{i}\left(\Borel\left(\frac{m}{x_k}\right)\right) = {k \choose i} w_k(B).
\]
\end{proposition}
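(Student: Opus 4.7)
My plan is to derive the identity algebraically by combining Proposition~\ref{p:betti} with the recursive rule defining the Catalan diagram $C(m)$. Set $d = \deg(m)$ and $B' = \Borel(m/x_k)$. Since $x_k = \max(m)$, one has $m/x_k = \trunc_{d-1}(m)$, so $B'$ is principal Borel, generated in degree $d-1$, with a linear resolution. Thus Proposition~\ref{p:betti} expresses both $b_{i-1}(B)$ and $b_i(B')$ as weighted binomial sums of the numbers $w_j(B)$ and $w_j(B')$, respectively.

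The combinatorial engine I would exploit is the identity
\[
w_j(B) = \sum_{\ell = 1}^{j} w_\ell(B') \qquad (1 \leq j \leq k),
\]
which is exactly the defining rule for $C(m)$: each entry in the bottom row (whose entries are the $w_j(B)$) is the partial sum of the entries in the second-to-bottom row (whose entries are the $w_\ell(B')$). Here I use that $w_\ell(B') = 0$ for $\ell > \max(m/x_k)$ to extend the upper limit of the inner sum freely up to $j$. Substituting this into Proposition~\ref{p:betti} and swapping the order of summation gives
\[
b_{i-1}(B) \;=\; \sum_{j=1}^{k} \binom{j-1}{i-1} w_j(B) \;=\; \sum_{\ell=1}^{k} w_\ell(B') \sum_{j=\ell}^{k} \binom{j-1}{i-1}.
\]

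At this point the hockey-stick identity $\sum_{j=\ell}^{k} \binom{j-1}{i-1} = \binom{k}{i} - \binom{\ell-1}{i}$ finishes the computation. The first piece yields $\binom{k}{i} \sum_{\ell=1}^{k} w_\ell(B')$; applying the Catalan identity one more time at $j = k$, the inner sum collapses to $w_k(B)$, so the first piece is $\binom{k}{i} w_k(B)$. The second piece yields $-\sum_{\ell=1}^{k} \binom{\ell-1}{i} w_\ell(B')$, which by Proposition~\ref{p:betti} is precisely $-b_i(B')$. Rearranging produces the desired formula.

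The main obstacle will be bookkeeping of summation ranges: I need to verify that all of the sums can be taken over $1 \leq j, \ell \leq k$ without changing the values, using both $w_j(B) = 0$ for $j > k$ and $w_\ell(B') = 0$ for $\ell > \max(m/x_k) \leq k$. Once this is pinned down, the argument is essentially a single application of the hockey-stick identity sandwiched between two invocations of the Catalan-diagram recursion.
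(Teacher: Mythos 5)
Your proof is correct, but it takes a genuinely different route from the paper's. The paper argues bijectively: it forms the set $X$ of pairs $(\mu,\alpha)$ with $\mu\in\gens(B)$, $\max(\mu)=x_k$, and $\alpha$ squarefree of degree $i$ in $x_1,\dots,x_k$ (so $|X|=\binom{k}{i}w_k(B)$), splits $X$ according to whether $\max(\alpha)<\max(\mu/x_k)$, and exhibits explicit bijections identifying the two pieces with the Eliahou--Kervaire symbols counted by $b_i(\Borel(m/x_k))$ and $b_{i-1}(B)$. You instead push the identity through Proposition~\ref{p:betti}, the partial-sum recursion defining $C(m)$ (valid because the $(d-1)^{\text{st}}$ row of $C(m)$ is the bottom row of $C(m/x_k)$, so its entries are the $w_\ell(B')$), and the hockey-stick identity $\sum_{j=\ell}^{k}\binom{j-1}{i-1}=\binom{k}{i}-\binom{\ell-1}{i}$. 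I checked your bookkeeping --- the vanishing of $w_j(B)$ for $j>k$ and of $w_\ell(B')$ beyond $\max(m/x_k)$, the interchange of summation, and the collapse $\sum_{\ell\le k}w_\ell(B')=w_k(B)$ --- and it all goes through (the degenerate case $i=0$ also works, since $b_{-1}(B)=0$ and $b_0(B')=w_k(B)$). The trade-off: your computation is shorter and essentially mechanical given machinery already established in the paper, whereas the paper's argument operates directly on the basis of the minimal free resolution, which explains the recursion at the level of EK symbols and is closer in spirit to Question~\ref{q:combinatorialbi}.
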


\begin{proof}
Consider the set of pairs 
\[
X = \{(\mu, \alpha): \mu\in \gens(B), \max(\mu) = x_k, \max(\alpha) \leq k, \deg(\alpha) = i\}
\]
where each $\alpha$ is a squarefree monomial, and note that $|X| = {k \choose i} w_k(B)$.  We mention that pairs in $X$ are not necessarily EK symbols, as we allow $x_k$ to occur in $\alpha$. 

Now write $X$ as the disjoint union $X = X_1 \sqcup X_2$, where $X_1 = \{(\mu, \alpha) \in X : \max(\alpha) < \max(\frac{\mu}{x_k})\}$ and $X_2 = X\setminus X_1$.  

When $\max(\mu) = x_k$, $\frac{\mu}{x_k} \in \gens(\Borel(\frac{m}{x_k}))$ if and only if $\mu \in \gens(\Borel(m))$.  Therefore, the map 
\[
(\mu, \alpha) \mapsto \left(\frac{\mu}{x_k}, \alpha\right)
\]
gives a bijection between $X_1$ and EK symbols $(\mu', \alpha)$ of $\Borel(\frac{m}{x_k})$ such that $\deg(\alpha) = i$.  Thus $|X_1| = b_{i}(\Borel(\frac{m}{x_k}))$.  

Now let $(\mu, \alpha)$ be an EK symbol of $B$ with $\deg(\alpha) = i -1$, and consider the map
\[
(\mu, \alpha) \mapsto \left(\frac{\mu x_k}{\max(\mu)}, \max(\mu)\alpha\right).
\]
We claim this map gives a bijection between such EK symbols and pairs in $X_2$.  Clearly, $\frac{\mu x_k}{\max(\mu)}$ is a generator of $B$ counted by $w_k(B)$.  Now let $(\mu', \alpha') \in X_2$, let $\max(\frac{\mu'}{x_k}) = x_j$, and let $\max(\alpha') = x_q$.  Since $(\mu', \alpha') \in X_2$, $q \geq j$, $\frac{x_q\mu'}{x_k}$ is a generator of $B$, meaning the map
\[
(\mu', \alpha') \mapsto \left(\frac{x_q\mu'}{x_k}, \frac{\alpha'}{x_q}\right)
\]
is an inverse to the above map.  Therefore, $|X_2| = b_{i-1}(B)$, and the result follows.
\end{proof}

\begin{example} \label{e:maxpowers}
When $B=\Borel(x_n^d)$, a power of the maximal ideal, the recursion gives 
\[ 
b_{i-1}(\Borel(x_n^d)) + b_i(\Borel(x_n^{d-1}) = \binom{n}{i} w_n(\Borel(x_n^d)) = \frac{n}{d+n-1} \binom{d+n-1}{i,n-i,d-1}.
\]
\end{example}

Recall from Example \ref{e:triangle} that $w_n(\Borel(x_1x_2 \cdots x_n)) = C_{n-1}$, the $(n-1)^{\text{st}}$ Catalan number.  Thus this ideal is interesting from a combinatorial perspective.  The following is a special case of Proposition~\ref{p:recursion}.

\begin{corollary}\label{c:firstvars}
For any $n$, the following recursion holds: 
\[
b_{i-1}(\Borel(x_1x_2 \cdots x_n)) + b_{i}(\Borel(x_1x_2 \cdots x_{n-1})) = {n \choose i } C_{n-1}.
\]
\end{corollary}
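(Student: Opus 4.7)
The plan is to apply Proposition~\ref{p:recursion} directly with the specific choice $m = x_1 x_2 \cdots x_n$. With this monomial we have $\max(m) = x_n$, so $k = n$, and $m/x_k = x_1 x_2 \cdots x_{n-1}$. Substituting these into Proposition~\ref{p:recursion} yields
\[
b_{i-1}(\Borel(x_1 x_2 \cdots x_n)) + b_i(\Borel(x_1 x_2 \cdots x_{n-1})) = \binom{n}{i}\, w_n(\Borel(x_1 x_2 \cdots x_n)),
\]
which matches the claimed recursion up to the identification of $w_n(\Borel(x_1 x_2 \cdots x_n))$ with $C_{n-1}$.

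The remaining step is precisely that identification. By the theorem on Catalan diagrams (which states that the $j$-th entry in the bottom row of $C(m)$ equals $w_j(\Borel(m))$), the quantity $w_n(\Borel(x_1 x_2 \cdots x_n))$ is the rightmost entry of the bottom row of $C(x_1 x_2 \cdots x_n)$. By Example~\ref{e:triangle}, this diagram coincides with the first $n$ rows of Catalan's triangle, and its main-diagonal entries are the Catalan numbers; since the bottom row has exactly $n$ boxes (the final variable being $x_n$), its rightmost entry sits on the main diagonal and equals $C_{n-1}$.

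There is essentially no obstacle here: the result is a pure specialization of Proposition~\ref{p:recursion}, combined with the standard observation from Example~\ref{e:triangle} about the diagonal of Catalan's triangle. If anything, the only care needed is to verify the indexing, namely that ``row $n$, column $n$'' of the Catalan triangle is indeed $C_{n-1}$ rather than $C_n$, which one can double-check against the $n=5$ diagram computed in Example~\ref{e:hs} where the bottom-right entry is $14 = C_4$.
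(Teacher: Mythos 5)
Your proof is correct and follows exactly the paper's route: the paper likewise obtains this corollary as the direct specialization of Proposition~\ref{p:recursion} to $m = x_1x_2\cdots x_n$, citing the observation from Example~\ref{e:triangle} that $w_n(\Borel(x_1x_2\cdots x_n)) = C_{n-1}$. Your extra care with the indexing (checking that the bottom-right entry of the Catalan triangle is $C_{n-1}$, consistent with the value $14 = C_4$ for $n=5$) is sound and matches the paper's convention.
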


\begin{proposition}\label{c:closedform}
For all $n \ge 1$ and all $i$, \[ b_{i}(\Borel(x_1 \cdots x_n)) = \frac{1}{n}\binom{2n}{n-i-1} \binom{n+i-1}{i}.\]
\end{proposition}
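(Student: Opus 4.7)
The plan is to induct on $n$, driven by Corollary~\ref{c:firstvars}, which gives
\[
b_{i-1}(\Borel(x_1\cdots x_n)) + b_i(\Borel(x_1\cdots x_{n-1})) = \binom{n}{i}\,C_{n-1}.
\]
The base case $n = 1$ is immediate: $\Borel(x_1) = (x_1)$ has $b_0 = 1$ and $b_i = 0$ for $i \geq 1$, in agreement with the closed form (since $\binom{2}{-i} = 0$ for $i \geq 1$).

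For the inductive step, let $F(n,i) = \frac{1}{n}\binom{2n}{n-i-1}\binom{n+i-1}{i}$ denote the proposed expression, and assume by induction that $b_i(\Borel(x_1\cdots x_{n-1})) = F(n-1,i)$. The recursion then reduces the proof to the single binomial identity
\[
F(n, i-1) + F(n-1, i) = \binom{n}{i}\,C_{n-1} \qquad \text{for all } i \geq 1.
\]

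The main obstacle is verifying this identity cleanly. I would proceed as follows: use $\binom{n+i-2}{i} = \frac{n-1}{i}\binom{n+i-2}{i-1}$ to factor $\binom{n+i-2}{i-1}$ out of the left-hand side, then use $\binom{n}{i} = \frac{n}{i}\binom{n-1}{i-1}$ together with $C_{n-1} = \frac{1}{n}\binom{2n-2}{n-1}$ on the right, and then clear common factorials from both sides. After this cleanup, and after pulling out $(2n-2)!/(n+i)!\,(n-i-2)!$ as a common factor, the identity collapses to the elementary polynomial identity
\[
2i(2n-1) + (n-i)(n-i-1) = (n+i)(n+i-1),
\]
which holds because both sides equal $(n+i)^2 - (n+i)$. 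Hence once the binomial cancellation is carried out the remaining verification is trivial, and the induction goes through.
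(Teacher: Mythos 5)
Your proof is correct and takes essentially the same approach as the paper's: induction on $n$ driven by Corollary~\ref{c:firstvars}, with the inductive step reduced to a binomial identity that is verified by direct manipulation. The paper merely organizes the algebra differently (solving the recursion for $b_i(\Borel(x_1\cdots x_n))$ and simplifying the resulting difference), whereas you verify the equivalent symmetric identity $F(n,i-1)+F(n-1,i)=\binom{n}{i}C_{n-1}$, and your terminal polynomial identity $2i(2n-1)+(n-i)(n-i-1)=(n+i)(n+i-1)$ checks out.
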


\begin{remark}\label{r:oeis}
This is an alternate formula for the unsigned version of sequence A062991 in the Online Encyclopedia of Integer Sequences \cite{oeis} (with the indexing shifted). It is listed in a comment in the entry in the OEIS, but no proof is given, so we sketch the argument below.
\end{remark}

\begin{proof}
We outline the computational proof, inducting on $n$. When $n=1$, the $0^{\text{th}}$ Betti number is one, and all other Betti numbers are zero, consistent with the formula. Assume now that the formula is true for some integer $n-1$, where $n \ge 2$, for all values of $i$. By Corollary~\ref{c:firstvars},
\[
b_i(\Borel(x_1 \cdots x_n)) = \binom{n}{i+1}C_{n-1} - b_{i+1}(\Borel(x_1\cdots x_{n-1})).
\]
Using the inductive hypothesis, we have
\begin{align*}
b_i(\Borel(x_1 \cdots x_n)) & = \frac{1}{n}\binom{n}{i+1}\binom{2n-2}{n-1} -  \frac{1}{n-1}\binom{2n-2}{n-i-3}\binom{n+i-1}{i+1}\\
& = \frac{1}{n}\binom{2n}{n-i-1}\binom{n+i-1}{i}\left(\frac{4in-2i+4n-2}{(i+1)(2n-1)(2)}\right)\\
& = \frac{1}{n}\binom{2n}{n-i-1}\binom{n+i-1}{i}.
\end{align*}
The intermediate computations simply involve factoring out $\frac{1}{n}$ and the appropriate binomial coefficients; then one checks that what remains is equal to one.
\end{proof}

The (unsigned version of) sequence A062991 in \cite{oeis} that gives Betti numbers of the ideals $\Borel(x_1 \cdots x_n)$ also arises in the paper \cite{AOSS}, which is devoted to counting pseudo-triangulations of particular point sets. For full details on the combinatorial terminology, see \cite{AOSS}. Briefly, a \emph{pseudo-triangle} is a polygon in the plane with exactly three convex vertices with internal angles measuring less than $\pi$. A \emph{pseudo-triangulation} is a partition of the convex hull of a point set $A$ in the plane into pseudo-triangles whose vertex set is exactly $A$. We say that a pseudo-triangulation is \emph{pointed} if every vertex has an incident angle greater than $\pi$. Pointed pseudo-triangulations have arisen in a number of settings recently; see, for example, \cite{RSS}. 
A set $A$ of $\ell+3$ points in the plane a \emph{single chain} if $A$ consists of $\ell+2$ points labeled $\alpha, p_1,\dots, p_{\ell}, \beta$ that form a convex $(\ell+2)$-gon and one additional point labeled $q$, outside the $(\ell+2)$-gon, but from which all edges but $(\alpha,\beta)$ are visible. Let $a(\ell,i)$ be the number of pointed pseudo-triangulations of $A$ in which the point $q$ is connected to exactly $i$ of the points $p_{j}$. (By \cite[Theorem 6]{AOSS}, these numbers also count certain triangulations of particular convex $(\ell+3)$-polygons.)

\begin{theorem}\label{e:ppt}
Let $a(\ell,i)$ be as above.  Then $a(\ell, i)=b_{\ell-i}(\borel(x_{1}\cdots x_{\ell+1}))$.
\end{theorem}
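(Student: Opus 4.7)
The plan is to prove the identity by matching closed-form expressions on both sides, rather than by producing an explicit bijection. Substituting $n = \ell+1$ and replacing the homological index by $\ell-i$ in Proposition~\ref{c:closedform} gives
\[
b_{\ell-i}(\Borel(x_{1}\cdots x_{\ell+1})) \;=\; \frac{1}{\ell+1}\binom{2\ell+2}{i}\binom{2\ell-i}{\ell-i},
\]
so the theorem reduces to showing that $a(\ell,i)$ equals the right-hand side.

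For the pseudo-triangulation side I would appeal to \cite[Theorem 6]{AOSS}, which (as is noted already in the excerpt) re-encodes $a(\ell,i)$ as a refined count of triangulations of a particular convex $(\ell+3)$-gon. Such refined polygon-triangulation counts are governed by Narayana-type formulas, and the observation in Remark~\ref{r:oeis} that our Betti numbers are the entries of OEIS sequence A062991 tells us the two doubly-indexed arrays agree up to re-indexing. The computational verification is then a routine binomial manipulation, of the same flavor as the one carried out in the proof of Proposition~\ref{c:closedform}: one factors out $\frac{1}{\ell+1}$ together with the common binomial coefficients and checks that the remaining expression is $1$.

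The main obstacle is bookkeeping: $a(\ell,i)$ is indexed by the number $i$ of edges from $q$, while the Betti number $b_{\ell-i}$ is indexed by homological degree, so the shift $i \leftrightarrow \ell-i$ must be tracked carefully through the comparison. A more conceptual route would be a direct bijection between pointed pseudo-triangulations of the single chain and Eliahou--Kervaire symbols $(\mu,\alpha)$ of $\Borel(x_{1}\cdots x_{\ell+1})$ of the appropriate homological degree; the minimal generators of this ideal biject with Dyck-type lattice paths and hence with triangulations of an $(\ell+3)$-gon, and the squarefree factor $\alpha$ should encode the set of $p_{j}$'s incident to $q$. Constructing this bijection explicitly, and in particular showing that the homological degree of $\alpha$ corresponds to $\ell-i$ under the correspondence, would be the main difficulty of any combinatorial proof, which is why I favor the computational route above.
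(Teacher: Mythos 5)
Your substitution into Proposition~\ref{c:closedform} is correct: with $n=\ell+1$ and homological degree $\ell-i$ one does get
\[
b_{\ell-i}(\Borel(x_{1}\cdots x_{\ell+1}))=\frac{1}{\ell+1}\binom{2\ell+2}{i}\binom{2\ell-i}{\ell-i}.
\]
But the other half of your argument --- the claim that $a(\ell,i)$ equals this same expression --- is never actually established, and this is a genuine gap. You appeal to ``Narayana-type formulas'' without exhibiting one, and you invoke Remark~\ref{r:oeis} as evidence that the two arrays agree; but that remark only records that the \emph{Betti numbers} match OEIS sequence A062991 (and explicitly notes that the OEIS comment comes with no proof). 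Nothing in what you cite gives a proved closed-form evaluation of $a(\ell,i)$; the identification of the pseudo-triangulation counts with A062991 is precisely the content of the theorem you are trying to prove, so your argument is circular at exactly the point where work is required. Theorem~6 of \cite{AOSS} provides a bijection with certain polygon triangulations, not a formula, so citing it does not close the loop either.

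The paper avoids this problem by working with what \cite{AOSS} actually proves about $a(\ell,i)$, namely the recursion of their Theorem~14: $a(\ell,i)=\binom{\ell+1}{i}C_{\ell}-a(\ell-1,i-2)$ for $i\ge 2$, with $a(\ell,0)=C_{\ell}$ and $a(\ell,1)=(\ell+1)C_{\ell}$. One then checks that $b_{\ell-i}(\Borel(x_{1}\cdots x_{\ell+1}))$ satisfies the identical recursion after the index shift --- this is exactly Corollary~\ref{c:firstvars}, with $\binom{\ell+1}{i}=\binom{\ell+1}{\ell-i+1}$ --- and verifies the base cases $b_{\ell}=C_{\ell}$ and $b_{\ell-1}=(\ell+1)C_{\ell}$ from the Eliahou--Kervaire counts. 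If you want to salvage your computational route, you would need to prove a closed form for $a(\ell,i)$ from the AOSS recursion first, which amounts to redoing the induction of Proposition~\ref{c:closedform} on the other side; matching recursions directly is both shorter and uses only results that are actually available.
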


\begin{proof}
It is enough to show that the Betti numbers satisfy the same recursion as the $a(\ell,i)$ after the shifting of indices. The recursion for the $a(\ell,i)$ is \cite[Theorem 14]{AOSS}, which states that
\[
a(\ell,i)=\binom{\ell+1}{i}C_{\ell}-a(\ell-1,i-2)
\] 
for $i \ge 2$, $a(\ell,0)=C_{\ell}$, and $a(\ell,1)=(\ell+1)C_{\ell}$. Translating the last two conditions, we need to show that $b_{\ell}(\Borel(x_1 \cdots x_{\ell+1}))=C_{\ell}$ and $b_{\ell-1}(\Borel(x_1 \cdots x_{\ell+1}))=(\ell+1)C_{\ell}$. Both of these formulas follow from the Eliahou-Kervaire resolution and the counts in the previous section. Rewriting the recursive formula for the $a(\ell,i)$ in the new indexing, we also need to prove that
\[
b_{\ell-i}(\Borel(x_1 \cdots x_{\ell+1})) = \binom{\ell+1}{i}C_{\ell} - b_{\ell-i+1}(\Borel(x_1 \cdots x_{\ell})).
\]
This follows from Corollary~\ref{c:firstvars} (rewriting $\binom{\ell+1}{i}$ as $\binom{\ell+1}{\ell-i+1}$).
\end{proof}

In light of the surprising connection in Theorem~\ref{e:ppt}, we close with two questions:

\begin{question}\label{q:combinatorialbi}
Is there a nice combinatorial bijection between some basis of the minimal resolution of $\Borel(x_1 \cdots x_n)$ and the pointed pseudo-triangulations of the single chain?  (There exist other bases for the resolution with interesting topological structure; see, for example, \cite{NR,Si}.)
\end{question}

\begin{question}\label{q:singlechain}
Suppose we change the set $A$ to a different point configuration. Do the analogous $a(\ell,i)$ correspond to the Betti numbers of other (in special cases, possibly principal) Borel ideals?
\end{question}


\end{document}